\numberwithin{equation}{section}
\newtheorem{Thm}{Theorem}[section]
\newtheorem{Prop}[Thm]{Proposition}
\newtheorem{Lem}[Thm]{Lemma}
\newtheorem{Def}[Thm]{Definition}
\def\bD {\mathbf{D}}
\def\bF {\mathbf{F}}
\def\bG {\mathbf{G}}
\def\bN {\mathbf{N}}
\def\bR {\mathbf{R}}
\def\bS {\mathbf{S}}
\def\bZ {\mathbf{Z}}
\def\cB {\mathcal{B}}
\def\cD {\mathcal{D}}
\def\cE {\mathcal{E}}
\def\cK {\mathcal{K}}
\def\cL {\mathcal{L}}
\def\cM {\mathcal{M}}
\def\cQ {\mathcal{Q}}
\def\wL {{w-L}}
\def\a {{\alpha}}
\def\b {{\beta}}
\def\g {{\gamma}}
\def\de {{\delta}}
\def\eps {{\epsilon}}
\def\th {{\theta}}
\def\ka {{\kappa}}
\def\l {{\lambda}}
\def\om {{\omega}}
\def\Om {{\Omega}}
\def\rstr {{\big |}}
\def\indc {{\bf 1}}
\def\la {\langle}
\def\ra {\rangle}
\def \La {\bigg\langle}
\def \Ra {\bigg\rangle}
\def \lA {\big\langle \! \! \big\langle}
\def \rA {\big\rangle \! \! \big\rangle}
\def \LA {\bigg\langle \! \! \! \! \! \; \bigg\langle}
\def \RA {\bigg\rangle \! \! \! \! \! \; \bigg\rangle}
\def\bFc {{\bF^{\hbox{conv}}_\eps}}
\def\bFcn{{\bF^{\hbox{conv}}_{\eps_n}}}
\def\bFd {{\bF^{\hbox{diff}}_\eps}}
\def\bFdn{{\bF^{\hbox{diff}}_{\eps_n}}}
\def\d {{\partial}}
\def\grad {{\nabla}}
\def\Dlt {{\Delta}}
\newcommand{\Div}{\operatorname{div}}
\newcommand{\Span}{\operatorname{span}}
\newcommand{\Supp}{\operatorname{supp}}
\newcommand{\Tr}{\operatorname{trace}}
\newcommand{\Ker}{\operatorname{Ker}}
\newcommand{\ba}{\begin{aligned}}
\newcommand{\ea}{\end{aligned}}
\newcommand{\be}{\begin{equation}}
\newcommand{\ee}{\end{equation}}
\newcommand{\lb}{\label}
\begin{document}

\title[Navier-Stokes Limit of the Boltzmann equation]
      {The Incompressible Navier-Stokes Limit\\ 
      of the Boltzmann Equation for Hard Cutoff Potentials}

\author[F. Golse]{Fran\c cois Golse}
\address[F. G.]%
{Ecole polytechnique\\
Centre de math\'ematiques L. Schwartz\\
F91128 Palaiseau cedex} 

\email{golse@math.polytechnique.fr}

\address[F. G.]%
{Universit\'e Paris Diderot - Paris 7\\
Laboratoire J.-L. Lions\\
4 place Jussieu\\
Bo\^\i te courrier 187\\
F75252 Paris cedex 05} 

\author[L. Saint-Raymond]{Laure Saint-Raymond}
\address[L. S.-R.]%
{Ecole Normale Sup\'erieure\\
D\'epartement de Math\'ematiques et Applications\\
45 rue d'Ulm\\
F75230 Paris cedex 05} 
\email{saintray@dma.ens.fr}

\begin{abstract}
The present paper proves that all limit points of sequences of renormalized solutions 
of the Boltzmann equation in the limit of small, asymptotically equivalent Mach and 
Knudsen numbers are governed by Leray solutions of the Navier-Stokes equations. 
This convergence result holds for hard cutoff potentials in the sense of H. Grad, and 
therefore completes earlier results by the same authors [Invent. Math. 155, 81-161 
(2004)] for Maxwell molecules.
\end{abstract}

\subjclass{35Q35, 35Q30, 82C40}

\keywords{Hydrodynamic limit, Boltzmann equation, Hard cutoff potential, 
Incompressible Navier-Stokes equations, Renormalized solutions, Leray
solutions}

\maketitle


\section{Introduction}


The subject matter of this article is the derivation of the Navier-Stokes equations for 
incompressible fluids from the Boltzmann equation, which is the governing equation 
in the kinetic theory of rarefied, monatomic gases. 

In the kinetic theory of gases founded by Maxwell and Boltzmann,  the state of a 
monatomic gas is described by the molecular number density in the single-body 
phase space, $f\equiv f(t,x,v)\ge 0$ that is the density with respect to the Lebesgue 
measure $dxdv$ of molecules with velocity $v\in\bR^3$ and position $x\in\bR^3$ at 
time $t\ge 0$. Henceforth, we restrict our attention to the case where the gas fills the
Euclidian space $\bR^3$. For a perfect gas, the number density $f$ satisfies the 
Boltzmann equation
\be\lb{BoltzEq}
\d_tf+v\cdot\grad_xf=\cB(f,f)\,,\quad x,v\in\bR^3\,,
\ee
where $\cB(f,f)$ is the Boltzmann collision integral. 

The Boltzmann collision integral acts only on the $v$ variable in the number density
$f$. In other words, $\cB$ is a bilinear operator defined on functions of the single
variable $v$, and it is understood that the notation
\be
\label{NotCollInt}
\cB(f,f)(t,x,v)\hbox{ designates }\cB(f(t,x,\cdot),f(t,x,\cdot))(v)\,,
\ee
For each continuous $f\equiv f(v)$ rapidly decaying at infinity, the collision integral
is given by
\be
\lb{CollInt}
\cB(f,f)(v)=\iint_{\bR^3\times\bS^2}(f(v')f(v'_1)-f(v)f(v_1))b(v-v_1,\om)dv_1d\om
\ee
where 
\be
\lb{Frml-v'v}
\ba
v'&\equiv v'(v,v_1,\om)\,=v-(v-v_1)\cdot\om\om\,,
\\
v'_1&\equiv v'_1(v,v_1,\om)=v_1+(v-v_1)\cdot\om\om\,.
\ea
\ee
The collision integral is then extended by continuity to wider classes of densities 
$f$, depending on the specifics of the function $b$.

The function $b\equiv b(v-v_1,\om)$, called the collision kernel, is measurable, 
a.e. positive, and satisfies the symmetry 
\be
\lb{b-sym}
b(v-v_1,\om)=b(v_1-v,\om)=b(v'-v'_1,\om)\hbox{ a.e. in }(v,v_1,\om)\,.
\ee
Throughout the present paper, we assume that $b$ satisfies
\be
\lb{Grad-ctff}
\ba
0<b(z,\om)\le C_b(1+|z|)^\b|\cos(\widehat{z,\om})|\hbox{ a.e. on }\bR^3\times\bS^2\,,
\\
\int_{\bS^2}b(z,\om)d\om\ge\frac1{C_b}\frac{|z|}{1+|z|}\hbox{ a.e. on }\bR^3\,.
\ea
\ee
for some $C_b>0$ and $\b\in[0,1]$. The bounds (\ref{Grad-ctff}) are verified by 
all collision kernels coming from a repulsive, binary intermolecular potential of 
the form $U(r)=U_0/r^s$ with Grad's angular cutoff (see \cite{GradRGD62}) and 
$s\ge 4$. Such power-law potentials are said to be ``hard" if $s\ge 4$ and ``soft" 
otherwise: in other words, we shall be dealing with hard cutoff potentials. The 
case of a hard-sphere interaction (binary elastic collisions between spherical
particles) corresponds with
\be
\lb{HrdSph-b}
b(z,\om)=|z\cdot\om|\,;
\ee
it is a limiting case of hard potentials that obviously satisfies (\ref{Grad-ctff}), even 
without Grad's cutoff. At the time of this writing, the Boltzmann equation has been 
derived from molecular dynamics --- i.e. Newton's equations of classical  mechanics 
applied to a large number of spherical particles --- in the case of hard sphere 
collisions, by O.E. Lanford \cite{Lanford}, see also \cite{EspoPulviHdbook} for the 
case of compactly supported potentials. Thus the collision kernel $b$ given by 
(\ref{HrdSph-b}) plays an important role in the mathematical theory of the Boltzmann 
equation.

The only nonnegative, measurable number densities $f$ such that $\cB(f,f)=0$ 
are Maxwellian densities, i.e. densities of the form 
\be
\lb{Maxw}
f(v)=\frac{R}{(2\pi\Theta)^{3/2}}e^{Ð\frac{|v-U|^2}{2\Theta}}=:\cM_{R,U,\Theta}(v)
\ee
for some $R\ge 0$, $\Theta>0$ and $U\in\bR^3$. Maxwellian densities whose
parameters $R,U,\Theta$ are constants are called ``uniform Maxwellians", 
whereas Maxwellian densities whose parameters $R,U,\Theta$ are functions 
of $t$ and $x$ are referred to as ``local Maxwellians". Uniform Maxwellians 
are solutions of (\ref{BoltzEq}); however, local Maxwellians are not  solutions
of (\ref{BoltzEq}) in general.

The incompressible Navier-Stokes limit of the Boltzmann equation can be stated
as follows. 

\newpage
\noindent
{\sc Navier-Stokes Limit of the Boltzmann Equation}

Let $u^{in}\equiv u^{in}(x)\in\bR^3$ be a divergence-free vector field
on $\bR^3$. For each $\eps>0$, consider the initial number density
\be
\lb{PertMaxw0}
f^{in}_\eps(x,v)=\cM_{1,\eps u^{in}(\eps x),1}(v)\,.
\ee
Notice that the number density $f^{in}_\eps$ is a slowly varying perturbation of 
order $\eps$ of the uniform Maxwellian $\cM_{1,0,1}$. Let $f_\eps$ solve the
Boltzmann equation (\ref{BoltzEq}) with initial data (\ref{PertMaxw0}), and define
\be
\lb{u-Feps}
u_\eps(t,x):=
	\frac1\eps\int_{\bR^3}vf_\eps\left(\frac{t}{\eps^2},\frac{x}{\eps},v\right)dv\,.
\ee
Then, in the limit as $\eps\to 0^+$ (and possibly after extracting a converging
subsequence), the velocity field $u_\eps$ satisfies
$$
u_\eps\to u\hbox{ in }\cD'(\bR_+\times\bR^3)
$$
where $u$ is a solution of the incompressible Navier-Stokes equations 
\be
\lb{Incompr-NS}
\ba
\d_t u+\Div_x(u\otimes u)+\grad_xp&=\nu\Dlt_xu\,,\quad x\in\bR^3\,,\,\,t>0\,,
\\
\Div_xu&=0\,,
\ea
\ee
with initial data
\be
\lb{NS-InData}
u\rstr_{t=0}=u^{in}\,.
\ee
The viscosity $\nu$ is defined in terms of the collision kernel $b$, by some
implicit formula, that will be given below.

\smallskip
(More general initial data than (\ref{PertMaxw0}) can actually be handled
with our method: see below for a precise statement of the Navier-Stokes
limit theorem.)

\smallskip
Hydrodynamic limits of the Boltzmann equation leading to incompressible
fluid equations have been extensively studied by many authors. See in
particular \cite{BGL1} for formal computations, and \cite{BGL0,BGL2} for 
a general program of deriving global solutions of incompressible fluid
models from global solutions of the Boltzmann equation. The derivation
of global weak (Leray) solutions of the Navier-Stokes equations from 
global weak (renormalized \`a la DiPerna-Lions) solutions of the Boltzmann
equation is presented in \cite{BGL2}, under additional assumptions on 
the Boltzmann solutions which remained unverified. In a series of later 
publications \cite{LiMa,LiMa2,BGL4,GoLe} some of these assumptions 
have been removed, except one that involved controlling the build-up of 
particles with large kinetic energy, and possible concentrations in the  
$x$-variable. This last assumption was
removed by the second author in the case of the model BGK equation 
\cite{SR1,SR2}, by a kind of dispersion argument based on the fact that
relaxation to local equilibrium improves the regularity in $v$ of number
density fluctuations. Finally, a complete proof of the Navier-Stokes limit
of the Boltzmann equation was proposed in \cite{GSRInvMath}. In this paper,
the regularization in $v$ was obtained by a rather different argument --- 
specifically, by the smoothing properties of the gain part of Boltzmann's 
collision integral --- since not much is known about relaxation to local
equilibrium for weak solutions of the Boltzmann equation. 

While the results above holds for global solutions of the Boltzmann
equation without restriction on the size (or symmetries) of its initial 
data, earlier results had been obtained in the regime of smooth
solutions \cite{dMEL,BaUk}. Since the regularity of Leray solutions
of the Navier-Stokes equations in 3 space dimensions is not known
at the time of this writing, such results are limited to either local
(in time) solutions, or to solutions with initial data that are small in
some appropriate norm.

The present paper extends the result of \cite{GSRInvMath} to the case
of hard cutoff potentials in the sense of Grad --- i.e. assuming that
the collision kernel satisfies (\ref{Grad-ctff}). Indeed, \cite{GSRInvMath}
only treated the case of Maxwell molecules, for which the collision
kernel is of the form
$$
b(z,\om)=|\cos(z,\om)|b^*(|\cos(z,\om)|)
		\hbox{ with } \frac1{C_*}\le b_*\le C_*\,.
$$
The method used in the present paper also significantly simplifies 
the original proof in \cite{GSRInvMath} in the case of Maxwell molecules.

Independently, C.D. Levermore and N. Masmoudi have extended
the analysis of \cite{GSRInvMath} to a wider class of collision kernels
that includes soft potentials with a weak angular cutoff in the sense
of DiPerna-Lions: see \cite{LevMas}. Their proof is written in the
case where the spatial domain is the 3-torus $\bR^3/\bZ^3$. 

In the present paper, we handle the case of the Euclidian space 
$\bR^3$, which involves additional technical difficulties concerning
truncations at infinity and the Leray projection on divergence-free
vector fields --- see Appendix C below.


\section{Formulation of the problem and main results}


\subsection{Global solutions of the Boltzmann equation}

The only global existence theory for the Boltzmann equation
without extra smallness assumption on the size of the initial 
data known to this date is the R. DiPerna-P.-L. Lions theory
of renormalized solutions \cite{DPL,Li}. We shall present 
their theory in the setting best adapted to the hydrodynamic
limit considered in the present paper.

All incompressible hydrodynamic limits of the Boltzmann
equation involve some background, uniform Maxwellian
equilibrium state --- whose role from a physical viewpoint
is to set the scale of the speed of sound. Without loss of
generality, we assume this uniform equilibrium state to be
the centered, reduced Gaussian density
\be
\lb{RefMaxw}
M(v):=\cM_{1,0,1}(v)=\frac1{(2\pi)^{3/2}}e^{-|v|^2/2}\,.
\ee

Our statement of the Navier-Stokes limit of the Boltzmann 
equation given above suggests that one has to handle the
scaled number density
\be
\lb{ScldDens}
F_\eps(t,x,v)=f_\eps\left(\frac{t}{\eps^2},\frac{x}{\eps},v\right)
\ee
where $f_\eps$ is a solution of the Boltzmann equation
(\ref{BoltzEq}). This scaled number density is a solution of
the scaled Boltzmann equation
\be
\lb{ScldBoltzEq}
\eps^2\d_tF_\eps+\eps v\cdot\grad_xF_\eps=\cB(F_\eps,F_\eps)\,,
\quad x,v\in\bR^3\,,\,\,t>0\,.
\ee
Throughout the present section, $\eps$ is any fixed, positive
number.

\begin{Def}
A renormalized solution of the scaled Boltzmann equation 
(\ref{ScldBoltzEq})  relatively to the global equilibrium $M$ 
is a function
$$
F\in C(\bR^+, L^1_{loc}(\bR^3\times \bR^3))
$$
such that
$$
\Gamma'\left(\frac{F}{M}\right)\cB(F,F)
	\in L^1_{loc}(\bR_+\times\bR^3\times\bR^3)
$$
and which satisfies 
\be
\lb{boltz-renormalized}
M\left(\eps^2\d_t+\eps v\cdot\grad_x\right)
	\Gamma\left(\frac{F}{M}\right)
		=\Gamma'\left(\frac{F}{M}\right) \cB(F,F)
\ee
for each normalizing nonlinearity
$$
\Gamma\in C^1(\bR^+)\hbox{ such that }
	|\Gamma'(z)|\leq\frac{C}{\sqrt{1+z}}\,,\quad z\ge 0\,.
$$
\end{Def}
 
The DiPerna-Lions theory is based on the only a priori
estimates that have natural physical interpretation. In
particular, the distance between any number density 
$F\equiv F(x,v)$ and the uniform equilibrium $M$ is
measured in terms of the relative entropy
\be
\lb{RelEnt}
H(F|M):=\iint_{\bR^3\times\bR^3}
	\left(F\ln\left(\frac{F}{M}\right)-F+M\right)dxdv\,.
\ee
Introducing 
\be
\lb{Def-h}
h(z)=(1+z)\ln(1+z)-z\ge 0\,,\quad z>-1\,,
\ee
we see that
$$
H(F|M)=\iint_{\bR^3\times\bR^3}
	h\left(\frac{F}{M}-1\right)Mdvdx\ge 0
$$
with equality if and only if $F=M$ a.e. in $x,v$.

While the relative entropy measures the distance of
a number density $F$ to the particular equilibrium
$M$, the local entropy production rate ``measures the
distance" of $F$ to the set of all Maxwellian densities.
Its expression is as follows:
\be
\lb{dissipation-def}
\cE(F)=\tfrac14\iiint_{\bR^3\times \bR^3\times\bS^2} 
	(F'F'_1-FF_1)\ln\left(\frac{F'F'_1}{FF_1}\right)
		b(v-v_1,\om)dvdv_1d\om\,.
\end{equation}

The DiPerna-Lions existence theorem is the following
statement  \cite{DPL,Li}.

\begin{Thm} \lb{Boltz-existence}
Assume that the collision kernel $b$ satisfies Grad's cutoff 
assumption (\ref{Grad-ctff}) for some $\beta \in [0,1]$. Let 
$F^{in}\equiv F^{in}(x,v)$ be any measurable, a.e. 
nonnegative function on $\bR^3\times\bR^3$ such that
\be
H(F^{in}|M)<+\infty\,.
\ee
Then, for each $\eps>0$, there exists a renormalized solution 
$$
F_\eps\in C(\bR^+, L_{loc}^1(\bR \times \bR^3))
$$ 
relatively to $M$ of the scaled Boltzmann equation 
(\ref{ScldBoltzEq}) such that
$$
F_\eps\rstr_{t=0}=F^{in}\,.
$$
Moreover, $F_\eps$ satisfies 

(a) the continuity equation
\be
\lb{mass-conservation}
\eps\d_t\int_{\bR^3}F_\eps dv+\Div_x\int_{\bR^3}vF_\eps dv=0\,,
\ee
and

(b) the entropy inequality
\be
\lb{entropy-ineq}
H(F_\eps|M)(t)+\frac1{\eps^2}\int_0^t \int_{\bR^3}\cE(F_\eps)(s,x)dsdx
	\le H(F^{in}|M)\,,\quad t>0\,.
\end{equation}
\end{Thm}

\smallskip
Besides the continuity equation (\ref{mass-conservation}),
classical solutions of the scaled Boltzmann equation 
(\ref{ScldBoltzEq}) with fast enough decay as $|v|\to\infty$ 
would satisfy the local conservation of momentum
\be
\lb{mom-conservation}
\eps\d_t\int_{\bR^3}vF_\eps dv+\Div_x\int_{\bR^3}v\otimes vF_\eps dv=0\,,
\ee
as well as the local conservation of energy
\be
\lb{ener-conservation}
\eps\d_t\int_{\bR^3}\tfrac12|v|^2F_\eps dv
	+\Div_x\int_{\bR^3}v\tfrac12|v|^2F_\eps dv=0\,.
\ee
Renormalized solutions of the Boltzmann equation
(\ref{ScldBoltzEq}) are not known to satisfy any of these
conservation laws except that of mass --- i.e. the continuity 
equation (\ref{mass-conservation}). Since these local
conservation laws are the fundamental objects in every
fluid theory, we expect to recover them somehow in the
hydrodynamic limit $\eps\to 0^+$.

\subsection{The convergence theorem}

It will be more convenient to replace the number density $F_\eps$ 
by its ratio to the uniform Maxwellian equlibrium $M$; also we shall 
be dealing mostly with perturbations of order $\eps$ of the uniform 
Maxwellian state $M$. Thus we define
\be
\lb{Def-Gg}
G_\eps=\frac{F_\eps}M\,,\quad g_\eps=\frac{G_\eps-1}{\eps}\,.
\ee
Likewise, the Lebesgue measure $dv$ will be replaced with the unit
measure $Mdv$, and we shall systematically use the notation
\be
\lb{Not<>}
\la\phi\ra=\int_{\bR^3}\phi(v)M(v)dv\,,\hbox{ for each }\phi\in L^1(Mdv)\,.
\ee
For the same reason, quantities like the local entropy production
rate involve the measure 
\be
\lb{Def-mu}
d\mu(v,v_1,\om)=b(v-v_1,\om)M_1dv_1Mdvd\om\,,\quad
	\iiint_{\bR^3\times\bR^3\times\bS^2}d\mu(v,v_1,\om)=1\,,
\ee
whose normalization can be assumed without loss of generality, by
some appropriate choice of physical units for the collision kernel $b$.
We shall also use the notation
\be
\lb{Not<<>>}
\lA\psi\rA=\iiint_{\bR^3\times\bR^3\times\bS^2}\psi(v,v_1,\om)d\mu(v,v_1,\om)
\hbox{ for }\psi\in L^1(\bR^3\times\bR^3\times\bS^2,d\mu)\,.
\ee

From now on, we consider solutions of the scaled Boltzmann equation
(\ref{ScldBoltzEq}) that are perturbations of order $\eps$  about the 
uniform Maxwellian $M$. This is conveniently expressed in terms of the
relative entropy.

\begin{Prop}[Uniform a priori estimates]\label{fluct-control} 
Let $F_\eps^{in}\equiv F_\eps^{in}(x,v)$ be a family of measurable, a.e. 
nonnegative functions such that
\be
\lb{init-fluctuation}
\sup_{\eps>0}\frac1{\eps^2}H(F_\eps^{in}|M)=C^{in}<+\infty\,.
\ee
Consider  a family  $(F_\eps)$ of renormalized solutions  of the scaled 
Boltzmann equation (\ref{ScldBoltzEq}) with initial data
\be
\lb{init-data}
F_\eps\rstr_{t=0}=F_\eps^{in}\,.
\ee
Then

(a) the family of relative number density fluctuations $g_\eps$ satisfies
\begin{equation}
\label{entropy-bound}
\frac1{\eps^2}\int_{\bR^3}\la h(\eps g_\eps(t,x,\cdot))\ra dx\le C^{in}
\end{equation}
where $h$ is the function defined in (\ref{Def-h});

(b) the family $\frac1\eps(\sqrt{G_\eps}-1)$ is bounded in 
$L^\infty(\bR_+;L^2(Mdvdx))$: 
\begin{equation}
\label{Entr-estm2}
\int_{\bR^3}\La\left(\frac{\sqrt{G_\eps}-1}{\eps}\right)^2\Ra dx\le C^{in}\,;
\end{equation}

(c) hence the family $g_\eps$ is relatively compact in
$L^1_{loc}(dtdx;L^1(Mdv))$;

(d) the family of relative number densities $G_\eps$ satisfies the
entropy production --- or dissipation estimate
\begin{equation}
\label{Entr-prd1}
\int_0^\infty\int_{\bR^3}\LA\left(
\frac{\sqrt{G'_\eps G'_{\eps 1}}-\sqrt{G_\eps G_{\eps 1}}}{\eps^2}
\right)^2\RA dxdt\le C^{in}\,.
\end{equation}
\end{Prop}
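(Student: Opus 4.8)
The plan is to extract all four estimates from the entropy inequality (\ref{entropy-ineq}) of Theorem \ref{Boltz-existence} combined with the initial bound (\ref{init-fluctuation}), using only a few elementary convexity inequalities. For part (a), I would discard the nonnegative dissipation term in (\ref{entropy-ineq}) to obtain $H(F_\eps|M)(t)\le H(F_\eps^{in}|M)$, rewrite the left side as $\int_{\bR^3}\la h(\eps g_\eps(t,x,\cdot))\ra dx$ via the identity $\frac{F_\eps}M-1=\eps g_\eps$, and divide by $\eps^2$; the right side is $\le\eps^2C^{in}$ by (\ref{init-fluctuation}), which is exactly (\ref{entropy-bound}). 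Part (b) then rests on the pointwise inequality $(\sqrt{1+z}-1)^2\le h(z)$ for $z>-1$, which I would prove by checking that $\varphi(y):=y\ln y-y+1-(\sqrt y-1)^2$ vanishes together with its first derivative at $y=1$, is convex there, tends to $0$ as $y\to0^+$ and to $+\infty$ as $y\to\infty$, hence is nonnegative on $(0,\infty)$. Taking $z=\eps g_\eps$ gives $\big(\tfrac{\sqrt{G_\eps}-1}\eps\big)^2\le\tfrac1{\eps^2}h(\eps g_\eps)$ pointwise, and integrating against $Mdvdx$ and applying (a) yields (\ref{Entr-estm2}).

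Part (d) is equally direct: retaining the dissipation term in (\ref{entropy-ineq}) and using (\ref{init-fluctuation}) gives $\frac1{\eps^4}\int_0^\infty\int_{\bR^3}\cE(F_\eps)\,dxdt\le C^{in}$, so it suffices to bound $\cE(F_\eps)$ from below by the target square. I would invoke the elementary logarithmic inequality $(a-b)\ln\tfrac ab\ge4(\sqrt a-\sqrt b)^2$ (equivalent, after $a=s^2$, $b=t^2$, to $\ln\tfrac st\ge\tfrac{2(s-t)}{s+t}$) with $a=F_\eps'F_{\eps1}'$ and $b=F_\eps F_{\eps1}$. Since $M$ is Maxwellian one has $M'M_1'=MM_1$, so the factor $\sqrt{MM_1}$ factors out of $\sqrt{F_\eps'F_{\eps1}'}-\sqrt{F_\eps F_{\eps1}}$; recognizing the collision measure $d\mu=bM_1dv_1Mdvd\om$ then gives $\cE(F_\eps)\ge\lA(\sqrt{G_\eps'G_{\eps1}'}-\sqrt{G_\eps G_{\eps1}})^2\rA$, and dividing by $\eps^4$ produces (\ref{Entr-prd1}).

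The main obstacle is part (c). The entropy bound of (a) does not by itself give uniform integrability of $g_\eps$, because the superlinearity of $z\mapsto\tfrac1{\eps^2}h(\eps z)$ weakens as $\eps z$ grows (indeed $\inf_{\eps>0}\tfrac1{\eps^2}h(\eps K)/K\to0$), so a direct de la Vall\'ee-Poussin argument fails uniformly in $\eps$. The remedy I would use is to route everything through the square-root fluctuation controlled in (b). Setting $\nu_\eps:=\tfrac{\sqrt{G_\eps}-1}\eps$, which is bounded in $L^\infty(\bR_+;L^2(Mdvdx))$ by (\ref{Entr-estm2}), the identity $G_\eps=(1+\eps\nu_\eps)^2$ yields the exact splitting $g_\eps=2\nu_\eps+\eps\nu_\eps^2$. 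On any slab $[0,T]\times B_R$ the first term is bounded in $L^2(dtdx\,Mdv)$, hence uniformly integrable on this finite-measure set, while $\int_0^T\int_{B_R}\la\eps\nu_\eps^2\ra\,dxdt\le\eps\,T\,C^{in}\to0$, so $\eps\nu_\eps^2\to0$ strongly in $L^1_{loc}(dtdx;L^1(Mdv))$. Thus $\{g_\eps\}$ is uniformly integrable and bounded in $L^1_{loc}$, and the Dunford-Pettis theorem gives the asserted relative (weak) compactness.
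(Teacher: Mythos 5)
Your proof is correct and follows essentially the same route as the paper's: parts (a) and (d) are read off the entropy inequality (\ref{entropy-ineq}) exactly as in the paper, with the same elementary inequality $(X-Y)\ln(X/Y)\ge 4(\sqrt X-\sqrt Y)^2$ and the same use of $M'M'_1=MM_1$ to pass from $\cE(F_\eps)$ to the $G_\eps$-form; part (b) rests on the same pointwise bound $(\sqrt z-1)^2\le h(z-1)$; and part (c) is precisely the paper's splitting $g_\eps=2\frac{\sqrt{G_\eps}-1}{\eps}+\eps\big(\frac{\sqrt{G_\eps}-1}{\eps}\big)^2$ (identity (\ref{identity1})) followed by Dunford--Pettis, which you spell out in more detail than the paper does. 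One small repair in your verification of the inequality in (b): $\varphi(y)=y\ln y-2y+2\sqrt y$ satisfies $\varphi''(y)=y^{-3/2}\big(\sqrt y-\tfrac12\big)$, so it is convex only on $[\tfrac14,\infty)$, and your argument as stated (vanishing to second order at $1$, convexity, limits at $0^+$ and $+\infty$) does not by itself exclude a dip below zero on $(0,\tfrac14)$; it suffices to add that $\varphi$ is concave there with nonnegative values at both ends of that interval, or simply to use the paper's algebraic factorization $h(y-1)-(\sqrt y-1)^2=2\sqrt y\,\big(\sqrt y\ln\sqrt y-\sqrt y+1\big)\ge 0$.
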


\begin{proof}
The entropy inequality implies that
$$
H(F_\eps|M)(t)=\int_{\bR^3}\la h(G_\eps-1)\ra (t,x)dx
	\le H(F_\eps^{in}|M)\le C^{in}\eps^2\,,
$$
which is the estimate (a).

The estimate (b) follows from (a) and the elementary identity
$$
\begin{aligned}
h(z-1)-(\sqrt{z}-1)^2&=z\ln z-(\sqrt z-1)(\sqrt z+1)-(\sqrt{z}-1)^2
\\
&=2z\ln\sqrt z-2(\sqrt z-1)\sqrt z
\\
&=2\sqrt z\left(\sqrt z\ln\sqrt z-\sqrt z+1\right)\ge 0
\end{aligned}
$$

From the identity
\begin{equation}
\label{identity1}
g_\eps=2\frac{\sqrt{G_\eps}-1}{\eps} 
	+\eps\left(\frac{\sqrt{G_\eps}-1}{\eps}\right)^2
\end{equation}
and the bound (b), we deduce the weak compactness statement (c).

Finally, the entropy inequality implies that
$$
\int_0^\infty\int_{\bR^3}\cE(F_\eps)(s,x)dxds\le C^{in}\eps^4\,.
$$
Observing that
$$
\ba
\cE(F_\eps)=\tfrac14\iiint_{\bR^3\times\bR^3\times\bS^2}
(F'_\eps F'_{\eps 1}-F_\eps F_{\eps 1})
\ln\left(\frac{F'_\eps F'_{\eps 1}}{F_\eps F_{\eps 1}}\right)
b(v-v_1,\om)dvdv_1d\om
\\
=\tfrac14\LA
(G'_\eps G'_{\eps 1}-G_\eps G_{\eps 1})
\ln\left(\frac{G'_\eps G'_{\eps 1}}{G_\eps G_{\eps 1}}\right)\RA
\ea
$$
and using the elementary inequality
$$
\tfrac14(X-Y)\ln\frac{X}{Y}\ge(\sqrt{X}-\sqrt{Y})^2\,,\quad X,Y>0
$$
leads to the dissipation estimate (d).
\end{proof}

\bigskip
Our main result in the present paper is a description of all limit points 
of the family of number density fluctuations $g_\eps$.

\begin{Thm}\label{BNSW-TH}
Let $F_{\eps}^{in}$ be a family of measurable, a.e. nonnegative functions 
defined on $\bR^3\times\bR^3$ satisfying the scaling condition 
(\ref{init-fluctuation}). Let  $F_\eps$ be a family of renormalized solutions 
relative to $M$ of the scaled Boltzmann equation (\ref{ScldBoltzEq}) with 
initial data (\ref{init-data}), for a hard cutoff collision kernel $b$ that satisfies 
(\ref{Grad-ctff}) with $\beta \in [0,1]$. Define the relative number density 
$G_\eps$ and the number density fluctuation $g_\eps$ by the formulas
(\ref{Def-Gg}).

Then, any limit point $g$ in $L^1_{loc}(dtdx;L^1(Mdv))$ of the family of 
number density fluctuations $g_\eps$  is an infinitesimal Maxwellian of 
the form
$$
g(t,x,v)=u(t,x)\cdot v+\th(t,x)\tfrac12(|v|^2-5)\,, 
$$
where the vector field $u$ and the function $\th$ are solutions of the
Navier-Stokes-Fourier system
\be
\lb{NSF}
\ba
\d_tu+\Div_x(u\otimes u)+\grad_xp&=\nu\Dlt_xu\,,\quad\Div_xu=0
\\
\d_t\th+\Div_x(u\th)&=\ka\Dlt_x\th
\ea
\ee
with initial data
\be
\lb{NSF-condin}
\ba
u^{in}&=w-\lim_{\eps\to 0}P\left(\frac1\eps\int vF^{in}_\eps dv\right)
\\
\th^{in}&=w-\lim_{\eps\to 0}\frac1\eps\int(\tfrac15|v|^2-1)(F^{in}_\eps-M)dv\,,
\ea
\ee
where $P$ is the Leray orthogonal projection in $L^2(\bR^3)$ on the space 
of diver- gence-free vector fields and the weak limits above are taken along
converging subsequences. Finally, the weak solution $(u,\th)$ of (\ref{NSF})
so obtained satisfies the energy inequality
\be
\lb{EnergIneq}
\ba
\int_{\bR^3}(\tfrac12|u(t,x)|^2+\tfrac54|\th(t,x)|^2)dx
&+
\int_0^t\int_{\bR^3}(\nu|\grad_xu|^2+\tfrac52\ka|\grad_x\th|^2)dx
\\
&\le\varliminf_{\eps\to 0^+}\frac1{\eps^2} H(F_\eps^{in}|M)
\ea
\ee
The viscosity $\nu$ and thermal conductivity $\ka$ are defined implicitly in 
terms of the collision kernel $b$ by the formulas (\ref{nu-kappa0}) below.
\end{Thm}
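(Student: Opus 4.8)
The plan is to follow the program of Bardos, Golse and Levermore, dividing the analysis into three parts: identifying the algebraic structure of every limit point $g$, deriving the two constraint equations (incompressibility and Boussinesq), and recovering the full Navier--Stokes--Fourier dynamics by passing to the limit in suitably renormalized local conservation laws. Throughout I would work with the moments of $g_\eps$ against the collision invariants $1,v,|v|^2$, and with the linearized collision operator $\cL$ obtained by differentiating $\cB$ about $M$, whose kernel is spanned by $1,v_1,v_2,v_3,|v|^2$.

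First I would show that every limit point $g$ is an infinitesimal Maxwellian. Inserting $\sqrt{G_\eps}=1+\tfrac\eps2 g_\eps+O(\eps^2)$ into the dissipation estimate (\ref{Entr-prd1}), the quantity controlled there is to leading order $\tfrac1{2\eps}(g'_\eps+g'_{\eps1}-g_\eps-g_{\eps1})$, whose boundedness in $L^2(d\mu)$ forces $g'+g'_1-g-g_1=0$ in the limit; hence $g\in\Ker\cL$, so $g(t,x,v)=\rho(t,x)+u(t,x)\cdot v+\th(t,x)\tfrac12(|v|^2-3)$, and the compactness statement (c) of Proposition \ref{fluct-control} makes the passage to the limit in the linear moments legitimate. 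The constraints then follow from the limiting conservation laws: dividing the continuity equation (\ref{mass-conservation}) by $\eps$ and letting $\eps\to0$ yields $\Div_x u=0$, while the Boussinesq relation $\rho+\th=0$ comes from the leading, $O(1/\eps)$, singular part of the renormalized momentum balance. Indeed the hydrodynamic momentum flux $\la v\otimes v\,g\ra=(\rho+\th)I$ is a pure pressure, so this singular term is a gradient, which must vanish in $\cD'$, giving $\grad_x(\rho+\th)=0$ and therefore the form $g=u\cdot v+\th\tfrac12(|v|^2-5)$ of the statement.

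The heart of the proof is the dynamics. Since renormalized solutions satisfy only the continuity equation, I would recover approximate local conservation laws for momentum and energy by testing the renormalized equation (\ref{boltz-renormalized}) against $v$ and $|v|^2$ after a suitable truncating renormalization $\Gamma$, and show that the resulting \emph{conservation defects} tend to zero using the entropy and dissipation bounds of Proposition \ref{fluct-control}. Passing to the limit then requires the limits of two families of fluxes. The convection terms arise from the quadratic part of the fluctuation as $\tfrac12\la A\,g^2\ra$ and $\tfrac12\la B\,g^2\ra$, with $A=v\otimes v-\tfrac13|v|^2I$ and $B=\tfrac12(|v|^2-5)v$; these produce $\Div_x(u\otimes u)$ and $\Div_x(u\th)$ modulo pressure. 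The viscous and heat-conduction terms come from the non-hydrodynamic part of $g_\eps$, which through the relations $\cL\hat A=A$ and $\cL\hat B=B$ is read off the entropy dissipation and yields $-\nu\Dlt_x u$ and $-\ka\Dlt_x\th$, with $\nu$ and $\ka$ given by the Burnett-function formulas (\ref{nu-kappa0}). Finally the energy inequality (\ref{EnergIneq}) follows by passing to the limit in the entropy inequality (\ref{entropy-ineq}), matching the scaled relative entropy with $\tfrac12|u|^2+\tfrac54\th^2$ and the scaled dissipation with the viscous and thermal terms.

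The main obstacle, and the genuinely new point for hard cutoff potentials, is the compactness and convergence of the nonlinear flux terms, which in the Maxwell-molecule case of \cite{GSRInvMath} exploited special features of the collision kernel. Here the growth $(1+|z|)^\b$ in (\ref{Grad-ctff}) forces one to control large-velocity tails and to establish equi-integrability in $v$ of the fluctuations; I would obtain the required regularity in $v$ from the smoothing properties of the gain part of $\cB$, adapted to the hard-potential weight, combined with $L^1$ velocity averaging, and use this to justify the passage to the limit in $\tfrac12\la A\,g^2\ra$ and $\tfrac12\la B\,g^2\ra$. Two further difficulties are specific to the domain $\bR^3$: the acoustic (gradient) modes must be shown to contribute nothing to the limiting convection term, which on $\bR^3$ is achieved by dispersion estimates rather than the recurrence arguments available on the torus, and it is precisely here that the Leray projection $P$ and the truncations at infinity of Appendix C enter, so that only the divergence-free interaction survives and the initial data take the projected form in (\ref{NSF-condin}).
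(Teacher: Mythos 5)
Your outline reproduces the broad architecture of the paper correctly (infinitesimal Maxwellian limit points, incompressibility and Boussinesq from the singular parts of the conservation laws, vanishing conservation defects, the convection/diffusion splitting of the fluxes with $\cL\hat A=A$, $\cL\hat B=B$, and the energy inequality by weak lower semicontinuity in the scaled entropy inequality). But at the single step you yourself flag as ``the genuinely new point for hard cutoff potentials'' your proposal takes exactly the route the paper discards. You propose to obtain the weighted equi-integrability in $v$ from ``the smoothing properties of the gain part of $\cB$, adapted to the hard-potential weight'': this is the Grad--Caflisch gain-regularity method of \cite{GSRInvMath}, and the authors state explicitly (opening of Section \ref{UINTV-SCT}) that it yields only the unweighted case $p=0$ and does not produce the weight $(1+|v|)^\b$ required when $\b>0$ --- a weight that is indispensable downstream, since the continuity of $\cQ$ holds from $L^2((1+|v|)^\b Mdv)$ into $L^2(a^{-1}Mdv)$, and the fluxes $\bF_\eps(A)$, $\bF_\eps(B)$ involve $A,B$ growing like $|v|^2,|v|^3$. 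The paper's substitute is different in kind: one introduces the fictitious normalized kernel $\tilde b=b/\bigl(1+\int_{\bS^2}b\,d\om_1\bigr)$, so that the associated $\tilde\cQ$ is bounded on $L^2(Mdv)$ without weights, and then uses the \emph{spectral gap} of $\cL$ to compare $\frac{\sqrt{G_\eps}-1}{\eps}$ directly with its hydrodynamic projection $\Pi\frac{\sqrt{G_\eps}-1}{\eps}$ (Lemma \ref{I-PI-LM}), giving $\bigl\|(I-\Pi)\frac{\sqrt{G_\eps}-1}{\eps}\bigr\|_{L^2(Mdv)}=O(\eps)$ up to quadratic terms; the weight $(1+|v|)^p$, $p<2$, is then attached by a Young-inequality bootstrap with $h$, $h^*$ (Proposition \ref{UINTV-PR}), exploiting that $\Pi$ maps into polynomials in $v$. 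Without this mechanism, or an argument of comparable strength, your plan stalls precisely where the extension beyond Maxwell molecules must be won, so this is a genuine gap rather than a stylistic variant.

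Two secondary points. First, you assert that on $\bR^3$ the acoustic modes are removed ``by dispersion estimates rather than the recurrence arguments available on the torus''; the paper instead uses the Lions--Masmoudi compensated-compactness lemma \cite{LiMa1} (Theorem \ref{A2-thm}), which is local and domain-insensitive, applied to mollified moments. The genuine $\bR^3$-specific difficulty is not recurrence but the failure of the Leray projection $P$ to be continuous on $L^p_{loc}$, which the paper repairs through the commutator $[\chi,P]$ (a pseudo-differential operator of order $-1$) and kernel-decay estimates for $Q=I-P$ (Appendix C), supported by the global bounds on $\eps\bF_\eps(A)$ and $\eps\bD_\eps(v)$ in Lemma \ref{LM-RHS} --- dispersion enters the paper only through the free-transport operator (Theorem \ref{B1-thm}), to convert your $v$-equi-integrability into equi-integrability in all variables. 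Second, your sketch leaves implicit two truncations that carry real weight: since $\Gamma(Z)=\sqrt Z$ is not an admissible renormalization, one must work with $\sqrt{\eps^\a+G_\eps}$, $\a\in(1,2)$ (Lemma \ref{ADVC-LM}), and the moments must be truncated at $|v|^2\le K_\eps=\mathbf{k}|\ln\eps|$ with Gaussian-tail estimates (Lemma \ref{GSTL-LM}) to make the conservation defects $\bD_\eps(\xi)$ vanish; your formal Taylor expansion $\sqrt{G_\eps}=1+\frac\eps2 g_\eps+O(\eps^2)$ is only justified after these controls are in place.
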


\smallskip
There are several ways of stating the formulas giving $\nu$ and $\ka$.
Perhaps the quickest route to arrive at these formulas is as follows.

Consider the Dirichlet form associated to the Boltzmann collision integral
linearized at the uniform equilibrium $M$:
\be
\lb{DirForm}
\cD_M(\Phi):=\tfrac18\lA|\Phi'+\Phi'_1-\Phi-\Phi_1|^2\rA\,.
\ee
The notation $|\cdot|^2$ designates the Euclidian norm on $\bR^3$ 
when $\Phi$ is vector-valued, or the Frobenius norm on $M_3(\bR)$
(defined by $|A|=\Tr(A^*A)^{1/2}$) when $\Phi$ is matrix-valued. Let
$\cD^*$ be the Legendre dual of $\cD$, defined by the formula
$$
\cD^*(\Psi):=\sup_{\Phi}\left(\la\Psi\cdot\Phi\ra-\cD(\Phi)\right)
$$
where the notation $\Phi(v)\cdot\Psi(v)$ designates the Euclidian
inner product in $\bR^3$ whenever $\Phi,\Psi$ are vector valued,
or the Frobenius inner product in $M_3(\bR)$ whenever $\Phi,\Psi$
are matrix-valued (the Frobenius inner product being defined by
$A\cdot B=\Tr(A^*B)$.) 

With these notations, one has
\be
\lb{nu-kappa0}
\nu:=\tfrac15\cD^*(v\otimes v-\tfrac13|v|^2I)\,,\quad
\ka:=\tfrac4{15}\cD^*(\tfrac12v(|v|^2-5))\,.
\ee

\smallskip
The weak solutions of the Navier-Stokes-Fourier system obtained in 
Theorem \ref{BNSW-TH} satisfy the energy inequality (\ref{EnergIneq})
and thus are strikingly similar to Leray solutions of the Navier-Stokes
equations in 3 space dimensions --- of which they are a generalization.
The reader is invited to check that, whenever the initial data $F_\eps^{in}$
is chosen so that 
$$
\frac1{\eps^2}H(F_\eps^{in}|M)\to\tfrac12\int_{\bR^3}|u^{in}(x)|^2dx
\hbox{ as }\eps\to 0^+\,,
$$
then the vector field $u$ obtained in Theorem \ref{BNSW-TH} is indeed
a Leray solution of the Navier-Stokes equations. More information on
this kind of issues can be found in \cite{GSRInvMath}. See in particular the
statements of Corollary 1.8 and Theorem 1.9 in \cite{GSRInvMath}, which
hold verbatim in the case of hard cutoff potentials considered in the 
present paper, and which are deduced from  Theorem \ref{BNSW-TH}
as explained in \cite{GSRInvMath}.

\subsection{Mathematical tools and notations for the hydrodynamic limit}

An important feature of the Boltzmann collision integral is the following
symmetry relations (the collision symmetries). These collision symmetries 
are straightforward, but fundamental consequences of the identities
(\ref{b-sym}) verified by the collision kernel, and can be formulated in the 
following manner. Let $\Phi\equiv\Phi(v,v_1)$ be such that 
$\Phi\in L^1(\bR^3\times\bR^3\times\bS^2,d\mu)$. Then
\be
\lb{CollSym}
\ba
\iiint_{\bR^3\times\bR^3\times\bS^2}\Phi(v,v_1)d\mu(v,v_1,\om)
=
\iiint_{\bR^3\times\bR^3\times\bS^2}\Phi(v_1,v)d\mu(v,v_1,\om)
\\
=
\iiint_{\bR^3\times\bR^3\times\bS^2}
	\Phi(v'(v,v_1,\om),v'_1(v,v_1,\om))d\mu(v,v_1,\om)
\ea
\ee
where $v'$ and $v'_1$ are defined in terms of $v,v_1,\om$ by the
formulas (\ref{Frml-v'v}).

Since the Navier-Stokes limit of the Boltzmann equation is a statement
on number density fluctuations about the uniform Maxwellian $M$, it is
fairly natural to consider the linearization at $M$ of the collision integral.

First, the quadratic collision integral is polarized into a symmetric bilinear
operator, by the formula
$$
\cB(F,G):=\tfrac12\left(\cB(F+G,F+G)-\cB(F,F)-\cB(G,G)\right)\,.
$$

\smallskip
\underbar{The linearized collision integral} is defined as
\be
\lb{L-def}
\cL f=-2M^{-1}\cB(M,Mf)\,.
\ee

Assuming that the collision kernel $b$ comes from a hard cutoff potential
in the sense of Grad (\ref{Grad-ctff}), one can show (see \cite{GradRGD62} 
for instance) that $\cL$ is  a possibly 
unbounded, self-adjoint, nonnegative Fredholm operator on the Hilbert
space $L^2(\bR^3,Mdv)$ with domain 
$$
D(\cL)=L^2(\bR^3,a(|v|)^2Mdv)
$$
 and nullspace
\be
\lb{kerL}
\Ker\cL=\Span\{1,v_1,v_2,v_3,|v|^2\}\,,
\ee
and that $\cL$ can  be decomposed as
$$
\cL g (v)=a(|v|)g(v)-\cK g (v)
$$
where $\cK$ is a compact integral operator on $L^2(Mdv)$ and $a=a(|v|)$ is a 
scalar function called the collision frequency that satisfies, for some $C>1$,
$$
0<a_- \leq a(|v|)\leq a_+(1+|v|)^\beta \,.
$$
In particular, $\cL$ has a spectral gap, meaning that there exists $C>0$ such that
\be
\lb{Crcv}
\la f\cL f\ra\ge C\|f-\Pi f\|^2_{L^2( Madv)}\,;
\ee
for each $f\in D(\cL)$, where $\Pi$ is the orthogonal projection on $\Ker\cL$ 
in $L^2(\bR^3,Mdv)$, i.e.
\be
\lb{Dfnt-Pi}
\Pi f=\la f\ra+\la vf\ra\cdot v+\la(\tfrac13|v|^2-1)f\ra\tfrac12(|v|^2-3)\,.
\ee

\smallskip
\underbar{The bilinear collision integral} intertwined with the
multiplication by $M$ is defined by
\begin{equation}
\label{Q-def}
\cQ(f,g) =M^{-1}\cB(Mf,Mg)\,.
\end{equation}

Under the only assumption that the collision
kernel satisfies (\ref{b-sym}) together with the bound
\be
\lb{BoundAv-b}
\int_{\bS^2}b(z,\om)d\om\le a_+(1+|z|)^\b\,,
\ee
$\cQ$  maps continuously $L^2(\bR^3,M(1+|v|)^\beta dv)$ into 
$L^2(\bR^3,a^{-1}Mdv)$. Indeed, by using the Cauchy-Schwarz inequality 
and the collision symmetries (\ref{CollSym}) entailed by (\ref{b-sym})
\be
\lb{Cnt-Q}
\begin{aligned}
{}&\|\cQ(g,h)\|_{L^2(a^{-1}Mdv)}^2=\int_{\bR^3}a(|v|)^{-1}
\\
&\times\left(\tfrac12\iint_{\bR^3\times\bS^2}
	(g'h'_1+g'_1h'-gh_1-g_1h)b(v-v_1,\om)M_1dv_1d\om\right)^2Mdv
\\
&\le\tfrac12\int_{\bR^3}a(|v|)^{-1} \left(\iint_{\bR^3\times\bS^2}b(v-v_1,\om)M_1dv_1d\om\right)
\\
&\times\left(\iint_{\bR^3\times\bS^2}(g'h'_1+g'_1h'-gh_1-g_1h)^2
	b(v-v_1,\om)M_1dv_1d\om\right)Mdv
\\
&\le
\sup_{v\in\bR^3}a(|v|)^{-1} \iint_{\bR^3\times\bS^2}b(v-v_1,\om)M_1dv_1d\om
\\
&\times\iiint_{\bR^3\times\bR^3\times\bS^2} 
	((g'h'_1)^2+(g'_1h')^2 +(gh_1)^2 +(g_1h)^2)d\mu(v,v_1,\omega)
\\
&\le 2C\iint_{\bR^3\times\bR^3}((gh_1)^2+(g_1h)^2)
	\left(\int_{\bS^2}b(v-v_1,\omega)d\omega\right)MM_1dvdv_1
\\
&\le 4C^2\|g\|^2_{L^2((1+|v|)^{\b}Mdv)}\| h\|^2_{L^2((1+|v|)^{\b}Mdv)}\,.
\end{aligned}
\ee
Another important property of the bilinear operator $\cQ$ is the following 
relation:
\be
\lb{QKerL}
\cQ(f,f)=\tfrac12\cL(f^2)\hbox{ for each }f\in\Ker\cL\,,
\ee
which follows from differentiating twice both sides of the equality
$$
\cB(\cM_{R,U,\Theta},\cM_{R,U,\Theta})=0
$$
with respect to $R\ge 0$, $\Theta>0$ and $U\in\bR^3$ --- see for instance
\cite{BGL1}, flas (59-60) for a quick proof of this identity.

\smallskip
\underbar{Young's inequality}

\noindent
Since the family of number density fluctuations $g_\eps$ satisfies the
uniform bound (a) in Proposition \ref{fluct-control} and the measure
$Mdv$ has total mass $1$, the fluctuation $g_\eps$ can be integrated
against functions of $v$ with at most quadratic growth at infinity, by
an argument analogous to the H\"older inequality. This argument will
be used in various places in the proof, and we present it here for the
reader's convenience. To the function $h$ in (\ref{Def-h}), we associate
its Legendre dual $h^*$ defined by
$$
h^*(\zeta):=\sup_{z>-1}(\zeta z-h(z))=e^{\zeta}-\zeta-1\,.
$$
Thus, for each $\zeta>0$ and each $z>-1$, one has
\be
\lb{Young<}
\zeta|z|\le h(|z|)+h^*(\zeta)\le h(z)+h^*(\zeta)
\ee
since 
$$
h(|z|)\le h(z)\,,\quad z>-1\,.
$$
The inequality (\ref{Young<}) is referred to as the Young inequality
(by analogy with the classical Young inequality
$$
\zeta z\le\frac{z^p}{p}+\frac{\zeta^q}{q}\,,\quad z,\zeta>0
$$
which holds whenever $1<p,q<\infty$ satisfy $\frac1p+\frac1q=1$.) 

\smallskip
\underbar{ Notations regarding functional spaces}

\noindent
Finally, we shall systematically use the following notations. First,
Lebesgue spaces without mention of the domain of integration
always designate that Lebesgue space on the largest domain of
integration on which the measure is defined. For instance
$$
\ba
{}&L^p(Mdv)&&\hbox{ designates }L^p(\bR^3;Mdv)
\\
&L^p(Mdvdx)&&\hbox{ designates }L^p(\bR^3\times\bR^3;Mdvdx)
\\
&L^p(d\mu)&&\hbox{ designates }L^p(\bR^3\times\bR^3\times\bS^2;d\mu)
\ea
$$
When the measure is the Lebesgue measure, we shall simply 
denote
$$
L^p_x:=L^p(\bR^3;dx)\,,\quad L^p_{t,x}:=L^p(\bR_+\times\bR^3;dtdx)\,.
$$

Whenever $E$ is a normed space, the notations $O(\de)_E$ and $o(\de)_E$
designate a family of elements of $E$ whose norms are $O(\de)$ or $o(\de)$. 
(For instance $O(1)_E$ designates a bounded family in $E$, while $o(1)_E$
designates a sequence that converges to $0$ in $E$.)

Although $L^p_{loc}$ spaces are not normed spaces, we designate by the
notation $O(\de)_{L^p_{loc}(\Om)}$ a family $f_\eps\in L^p_{loc}(\Om)$ such
that, for each compact $K\subset\Om$,
$$
\|f_\eps\|_{L^p(K)}=O(\de)\,.
$$
The notation $o(\de)_{L^p_{loc}(\Om)}$ is defined similarly.

\subsection{Outline of the proof of Theorem \ref{BNSW-TH}}

In terms of the fluctuation $g_\eps$, the scaled Boltzmann equation 
(\ref{ScldBoltzEq}) with initial condition (\ref{init-data}) can be put in the
form 
\be
\lb{BoltzFluct}
\ba
\eps\d_tg_\eps+v\cdot\grad_xg_\eps&=-\frac1\eps\cL(g_\eps)+\cQ(g_\eps,g_\eps)\,,
\\
g_{\eps |t=0}&=g_\eps^{in}\,.
\ea
\ee

\noindent 
\underline{\it Step 1}~:
We first prove that any limit point $g$ of the family of fluctuations $g_\eps$ 
as $\eps\to 0^+$ satisfies
$$
g=\Pi g
$$
where $\Pi$ is the orthogonal projection on the nullspace of $\cL$ defined
in (\ref{Dfnt-Pi}). 

Hence, the limiting fluctuation $g$ is an infinitesimal Maxwellian, i.e. of
the form
\be
\lb{InfinitMaxw}
g(t,x,v)=\rho(t,x)+u(t,x)\cdot v+\th(t,x)\tfrac12(|v|^2-3)\,.
\ee
The limiting form of the continuity equation (\ref{mass-conservation}) is
equivalent to the incompressibility condition on $u$:
$$
\Div_xu=0\,.
$$

\noindent 
\underline{\it Step 2}~:
In order to obtain equations for the moments 
$$
\rho=\la g\ra\,,\quad u=\la vg \ra\,,\quad\hbox{ and }\th=\la (\tfrac13|v|^2-1) g\ra
$$
we pass to the limit in approximate local conservation laws deduced from
the Boltzmann equation in the following manner.

Besides the square-root renormalization, we  use a renormalization of the 
scaled Boltzmann equation (\ref{ScldBoltzEq}) based on a smooth truncation 
$\g$ such that
\be
\lb{Dfnt-gmm}
\g\in C^\infty(\bR^+,[0,1])\,,\,\,\g\rstr_{[0,\tfrac32]}\equiv 1\,,
	\,\,\g\rstr_{[2,+\infty)}\equiv 0\,.
\ee
Define
\be
\lb{Dfnt-hgmm}
\hat\g(z)=\frac{d}{dz}((z-1)\g(z))\,.
\ee
Notice that
\be
\lb{Prpt-hgmm}
\Supp(\hat\g)\subset[0,2]\,,\,\,\hat\g\rstr_{[0,\tfrac32]}\equiv 1\,,
    \hbox{ and }\|1-\hat\g\|_{L^\infty}\le 1+\|\g'\|_{L^\infty}\,.
\ee
We use below the notation $\g_\eps$ and $\hat\g_\eps$ to denote respectively 
$\g(G_\eps)$ and $\hat\g(G_\eps)$.

We also use a truncation of high velocities, defined as follows: given ${\bf k}>6$, we 
set 
\be
\lb{DefKeps}
K_\eps={\bf k}|\ln\eps|\,.
\ee
For each continuous scalar function, or vector- or tensor-field $\xi\equiv\xi(v)$, 
we denote by $\xi_{K_\eps}$ the following truncation of $\xi$:
\be
\lb{TruncVel}
\xi_{K_\eps}(v)=\xi(v)\indc_{|v|^2\le K_\eps}\,.
\ee

Renormalizing the scaled Boltzmann equation (\ref{ScldBoltzEq}) with the
nonlinearity $\Gamma(Z)=(Z-1)\g(Z)$, we arrive at the following form of  
(\ref{BoltzFluct}) 
$$
\d_t(g_\eps\g_\eps)+\frac1\eps v\cdot\grad_x(g_\eps\g_\eps)
    =\frac1{\eps^3}\hat\g_\eps\cQ(G_\eps,G_\eps)\,.
$$
Multiplying each side of the equation above by $\xi_{K_\eps}$, and averaging 
in the variable $v$ leads to
\be
\lb{Mmnt-xi}
\d_t\la\xi_{K_\eps}g_\eps\g_\eps\ra
+
\Div_x\frac1\eps\la v\xi_{K_\eps}g_\eps\g_\eps\ra
=\frac1{\eps^3}
\lA\xi_{K_\eps}\hat\g_\eps(G'_\eps G'_{\eps 1}-G_\eps G_{\eps 1})\rA\,.
\ee

Henceforth we use the following notations for the fluxes of momentum or
energy:
\be
\lb{Dfnt-F}
\bF_\eps(\zeta)=\frac1\eps\la\zeta_{K_\eps}g_\eps\g_\eps\ra
\ee
with
$$
\zeta(v)=A(v):=v^{\otimes 2}-\tfrac13|v|^2I\,,
	\hbox{ or }\zeta(v)=B(v):=\tfrac12v(|v|^2-5)\,.
$$
Likewise, we use the notation
\be
\lb{Dfnt-D}
\bD_\eps(\xi)=\frac1{\eps^3}
\lA\xi_{K_\eps}\hat\g_\eps(G'_\eps G'_{\eps 1}-G_\eps G_{\eps 1})\rA
\ee
for the conservation defect corresponding with the (truncated) quantity
$\xi\equiv\xi(v)$, where $\xi\in\Span\{1,v_1,v_2,v_3,|v|^2\}$.

The Navier-Stokes motion equation is obtained by passing to the limit 
as $\eps\to 0$ modulo gradient fields in the equation (\ref{Mmnt-xi}) for 
$\xi(v)=v_j$, $j=1,2,3$, recast as
\be
\lb{Mmnt-v}
\d_t\la v_{K_\eps}g_\eps\g_\eps\ra+\Div_x\bF_\eps(A)
+\grad_x\la\tfrac13|v|^2_{K_\eps}g_\eps\g_\eps\ra=\bD_\eps(v)\,,
\ee
while the temperature equation is obtained by passing to the limit in 
that same equation with $\xi(v)=\tfrac12(|v|^2-5)$, i.e. in
\be
\lb{Mmnt-v2}
\d_t\la\tfrac12(|v|^2-5)_{K_\eps}g_\eps\g_\eps\ra
+\Div_x\bF_\eps(B)=\bD_\eps(\tfrac12(|v|^2-5))\,.
\ee

\bigskip
For the mathematical study of that limiting process, the uniform a priori 
estimates obtained from the scaled entropy inequality are not sufficient. 
Our first task is therefore to improve these estimates using both

\noindent
a)  the properties of the collision operator (see Section 3), namely a 
suitable control on the relaxation based on the coercivity estimate (\ref{Crcv})
$$\la\phi\cL\phi\ra\ge C\|\phi-\Pi\phi\|^2_{L^2( Madv)}\,;
$$

\noindent
b) and the properties of the free transport operator (see Section 4), 
namely dispersion and velocity averaging.

\smallskip
With the estimates obtained in Sections 3-4, we first prove (in Section 5)
that the conservation defects vanish asymptotically
$$
\bD_\eps(\xi) \to 0 \hbox{ in } L^1_{loc}(dtdx)\,,
	\quad \xi\in\Span\{v_1,v_2,v_3,|v|^2\}
$$

Next we analyze the asymptotic behavior of the flux terms. This requires 
splitting these flux terms into a convection and a diffusion part (Section 6)
$$
\bF_\eps(\zeta)-2\La\zeta\left(\Pi\frac{\sqrt{G_\eps}-1}{\eps}\right)^2\Ra
    +\frac2{\eps^2}\La\hat\zeta\cQ(\sqrt{G_\eps},\sqrt{G_\eps})\Ra
    	\to 0\hbox{ in }L^1_{loc}(dtdx)
$$
where $\hat \zeta$ is the unique solution in $(\Ker \cL)^\perp$ of the
Fredholm integral equation
$$
\cL \hat \zeta =\zeta\,.
$$
For instance, the tensor field $A$ and the vector field $B$ defined by
\be
\lb{Def-AB}
A(v):=v\otimes v-\tfrac13|v|^2I\,,\quad
	B(v):=\tfrac12(|v|^2-5)v
\ee
satisfy 
\be
\lb{AB-orthKer}
A\bot\Ker\cL\,,\quad B\bot\Ker\cL
\ee
componentwise, so that there exists a unique tensor field $\hat A$
and a unique vector field $\hat B$ such that
\be
\lb{LABhat}
\cL\hat A=A\,,\quad\cL\hat B=B\,,\quad\hat A\hbox{ and }\hat B\bot\Ker\cL\,,
\ee

The diffusion terms are easily proved to converge towards the dissipation
terms in the Navier-Stokes-Fourier system
$$
\frac2{\eps^2}\La\hat\zeta\cQ(\sqrt{G_\eps},\sqrt{G_\eps})\Ra
\to 
\la\hat\zeta(v\cdot\grad_xg)\ra\hbox{ in }L^1_{loc}(dtdx)\,.
$$
The formulas (\ref{nu-kappa0}) for the viscosity $\nu$ and heat conduction
$\kappa$ are easily shown to be equivalent to
\be
\lb{nu-kappa}
\nu=\tfrac1{10}\la\hat A:A\ra\,,\quad\kappa=\tfrac2{15}\la\hat B\cdot B\ra\,.
\end{equation}
The (nonlinear) convection terms require a more careful treatment, involving
 in particular some spatial regularity argument and the filtering of acoustic 
 waves (see Section 7).
 

\section{Controls on the velocity dependence\\ 
of the number density fluctuations}\label{UINTV-SCT}


The goal of this section is to prove that the square number density fluctuation 
--- or more precisely the following variant thereof
$$
\left(\frac{\sqrt{G_\eps}-1}{\eps}\right)^2
$$
is uniformly integrable in $v$ with the weight $(1+|v|)^p$ for each $p<2$. 

In our previous work \cite{GSRInvMath}, we obtained this type of control for 
$p=0$ only, by a fairly technical argument (see Section 6 of \cite{GSRInvMath}). 
Basically, we used the entropy production bound to estimate some notion of 
distance between the number density and the gain part of a fictitious collision 
integral. The conclusion followed from earlier results by Grad and Caflisch on 
the $v$-regularity of the gain term in Boltzmann's collision integral linearized 
at some uniform Maxwellian state. 

Unfortunately, this method seems to provide only estimates without the weight 
$(1+|v|)^\b$ (with $\b$ as in (\ref{Grad-ctff})) that is crucial for treating hard 
potentials other than the case of Maxwell molecules. Obtaining the weighted 
estimates requires some new ideas presented in this section.  

The first such idea is to use the  spectral gap estimate (\ref{Crcv}) for the 
linearized collision integral. Instead of comparing the number density to (some 
variant of) the local Maxwellian equilibrium --- as in the case of the BGK model
equation, treated in \cite{SR1,SR2}, or in the case of the Boltzmann equation 
with Maxwell molecules as in \cite{GSRInvMath} --- we directly compare the 
number density fluctuation to the infinitesimal Maxwellian that is its projection 
on hydrodynamic modes. 

The lemma below provides the basic argument for arriving at such estimates. 

\begin{Lem}\label{I-PI-LM}
Under the assumptions of Theorem \ref{BNSW-TH}, one has
\begin{equation}
\label{Crcv-1}
\left\|\frac{\sqrt{G_\eps}-1}{\eps}-\Pi\frac{\sqrt{G_\eps}-1}{\eps}\right\|_{L^2(Mdv)}
\le 
O(\eps)_{L^2_{t,x}}+O(\eps)\left\|\frac{\sqrt{G_\eps}-1}{\eps}\right\|^2_{L^2(Mdv)}\,.
\end{equation}
\end{Lem}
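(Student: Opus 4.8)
The plan is to measure the non-hydrodynamic part of $\phi_\eps:=\tfrac1\eps(\sqrt{G_\eps}-1)$ through the spectral gap (\ref{Crcv}) for $\cL$, so the first task is to express $\cL\phi_\eps$ exactly in terms of quantities we already control. Writing $\sqrt{G_\eps}=1+\eps\phi_\eps$ and using that $\cQ$ is bilinear symmetric with $\cQ(1,1)=0$ and $\cQ(1,\phi)=-\tfrac12\cL\phi$ (immediate from (\ref{L-def}) and (\ref{Q-def})), one gets
$$
\cQ(\sqrt{G_\eps},\sqrt{G_\eps})=-\eps\,\cL\phi_\eps+\eps^2\,\cQ(\phi_\eps,\phi_\eps).
$$
On the other hand, the definition (\ref{Q-def}) and the collisional identity $M'M'_1=MM_1$ give
$$
\cQ(\sqrt{G_\eps},\sqrt{G_\eps})(v)=\iint_{\bR^3\times\bS^2}\!\big(\sqrt{G'_\eps G'_{\eps1}}-\sqrt{G_\eps G_{\eps1}}\big)\,b\,M_1\,dv_1d\om=\eps^2\!\iint_{\bR^3\times\bS^2}\!q_\eps\,b\,M_1\,dv_1d\om,
$$
where $q_\eps:=\eps^{-2}(\sqrt{G'_\eps G'_{\eps1}}-\sqrt{G_\eps G_{\eps1}})$ is exactly the integrand bounded in the dissipation estimate (\ref{Entr-prd1}). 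Equating the two expressions yields the key identity
$$
\cL\phi_\eps=\eps\,\cQ(\phi_\eps,\phi_\eps)-\eps\!\iint_{\bR^3\times\bS^2}\!q_\eps\,b\,M_1\,dv_1d\om.
$$

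Next I would put the coercivity (\ref{Crcv}) in dual form. With $g:=\phi_\eps-\Pi\phi_\eps$, self-adjointness of $\cL$ together with $\cL\Pi=0$ gives $\la\phi_\eps\cL\phi_\eps\ra=\la g\,\cL\phi_\eps\ra$, and the weighted Cauchy--Schwarz inequality bounds this by $\|g\|_{L^2(aMdv)}\|\cL\phi_\eps\|_{L^2(a^{-1}Mdv)}$; combined with (\ref{Crcv}) this produces
$$
\|\phi_\eps-\Pi\phi_\eps\|_{L^2(aMdv)}\le C\,\|\cL\phi_\eps\|_{L^2(a^{-1}Mdv)}.
$$
Since $a\ge a_-$, the left-hand side dominates $\sqrt{a_-}\,\|\phi_\eps-\Pi\phi_\eps\|_{L^2(Mdv)}$, so it suffices to estimate the two terms of the key identity in $L^2(a^{-1}Mdv)$.

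For the $q_\eps$ term, Cauchy--Schwarz in $(v_1,\om)$ and the fact that the collision frequency equals $a(|v|)=\iint b\,M_1\,dv_1d\om$ give $\big|\iint q_\eps\,b\,M_1\,dv_1d\om\big|^2\le a(|v|)\iint q_\eps^2\,b\,M_1\,dv_1d\om$; multiplying by $a^{-1}M$ and integrating shows this term is bounded in $L^2(a^{-1}Mdv)$ by $\lA q_\eps^2\rA^{1/2}$, which by (\ref{Entr-prd1}) is $O(1)_{L^2_{t,x}}$. Hence its $\eps$-multiple supplies the $O(\eps)_{L^2_{t,x}}$ on the right of (\ref{Crcv-1}). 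For the quadratic term the only available tool is the continuity estimate (\ref{Cnt-Q}), which gives $\|\cQ(\phi_\eps,\phi_\eps)\|_{L^2(a^{-1}Mdv)}\le C\|\phi_\eps\|_{L^2((1+|v|)^\b Mdv)}^2$.

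The \emph{main obstacle} is precisely this last step: for hard potentials ($\b>0$) the estimate (\ref{Cnt-Q}) unavoidably produces the weighted norm $\|\phi_\eps\|_{L^2((1+|v|)^\b Mdv)}^2$, whereas (\ref{Crcv-1}) carries only the unweighted $\|\phi_\eps\|_{L^2(Mdv)}^2$ (for Maxwell molecules $\b=0$ the two coincide and (\ref{Crcv-1}) follows at once). To extract the unweighted leading term I would split $\phi_\eps=\Pi\phi_\eps+(\phi_\eps-\Pi\phi_\eps)$ inside $\cQ$: by (\ref{QKerL}), $\cQ(\Pi\phi_\eps,\Pi\phi_\eps)=\tfrac12\cL((\Pi\phi_\eps)^2)$ lies in the finite-dimensional space of polynomials of degree $\le4$ times $M$, on which weighted and unweighted $L^2$ norms are equivalent and whose coefficients are moments of $\phi_\eps$ controlled by $\|\phi_\eps\|_{L^2(Mdv)}$; this reproduces exactly the term $O(\eps)\|\phi_\eps\|^2_{L^2(Mdv)}$ of (\ref{Crcv-1}). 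The cross and diagonal terms carry one or two weighted factors of $\phi_\eps-\Pi\phi_\eps$, and controlling these is the genuinely new point for $\b>0$: they constitute the weighted remainder that this lemma is designed to feed into the uniform-integrability bootstrap developed in the rest of the section.
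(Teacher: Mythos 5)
Your skeleton is the right one and, up to the last step, matches the paper's: the expansion of $\cQ(\sqrt{G_\eps},\sqrt{G_\eps})$ around $1$ via $\cQ(1,\phi)=-\tfrac12\cL\phi$, the coercivity of $\cL$ tested against $(I-\Pi)\phi_\eps$, and the Cauchy--Schwarz treatment of the entropy-production term are all correct. But the proof is not complete, and you say so yourself: for $\b>0$ the cross and diagonal terms $\cQ\bigl(\Pi\phi_\eps,(I-\Pi)\phi_\eps\bigr)$ and $\cQ\bigl((I-\Pi)\phi_\eps,(I-\Pi)\phi_\eps\bigr)$ are left uncontrolled, and they cannot be controlled by the right-hand side of (\ref{Crcv-1}) as stated. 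Concretely, (\ref{Cnt-Q}) produces the weighted norm $\|(I-\Pi)\phi_\eps\|_{L^2((1+|v|)^\b Mdv)}$; under the hypotheses (\ref{Grad-ctff}) the collision frequency is only known to satisfy $a_-\le a(|v|)\le a_+(1+|v|)^\b$, so the quantity $\|(I-\Pi)\phi_\eps\|_{L^2(aMdv)}$ controlled by the coercivity need not dominate that weighted norm; and even granting $a\asymp(1+|v|)^\b$ (true for genuine hard potentials), absorbing the cross term $\eps\,O\bigl(\|\phi_\eps\|_{L^2(Mdv)}\bigr)\|(I-\Pi)\phi_\eps\|_{L^2(aMdv)}$ into the left-hand side would require $\eps\|\phi_\eps(t,x,\cdot)\|_{L^2(Mdv)}$ to be small \emph{pointwise in} $(t,x)$, which the entropy bound (\ref{Entr-estm2}) does not provide. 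Your closing suggestion also inverts the paper's logic: Proposition \ref{UINTV-PR} is \emph{deduced from} Lemma \ref{I-PI-LM}, so the lemma cannot borrow the uniform-integrability bootstrap to close its own proof.

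The missing idea in the paper is a renormalization of the \emph{kernel}, not of the density: set $\tilde b(z,\om)=b(z,\om)/\bigl(1+\int_{\bS^2}b(z,\om_1)\,d\om_1\bigr)$ and run your identity with the fictitious operators $\tilde\cL,\tilde\cQ$ built from $\tilde b$. Then (i) $\int_{\bS^2}\tilde b\,d\om\le1$, so the analogue of (\ref{Cnt-Q}) gives $\|\tilde\cQ(g,h)\|_{L^2(Mdv)}\le2\|g\|_{L^2(Mdv)}\|h\|_{L^2(Mdv)}$ with no weight --- the obstruction disappears and no splitting by $\Pi$ is needed for the quadratic term; (ii) since $x\mapsto x/(1+x)$ is increasing and $\int_{\bS^2}b\,d\om$ is bounded below by the \emph{bounded} function $\tfrac1{C_b}|z|/(1+|z|)$, the kernel $\tilde b$ satisfies (\ref{Grad-ctff}) with $\b=0$, so $\tilde\cL$ has the same nullspace as $\cL$ and a spectral gap in plain $L^2(Mdv)$; (iii) $0\le\tilde b\le b$, so your Cauchy--Schwarz bound on the entropy-production term transfers, $\frac1{\eps^2}\tilde\cQ(\sqrt{G_\eps},\sqrt{G_\eps})$ being $O(1)_{L^2_{t,x}}$ in $L^2(Mdv)$ by (\ref{Entr-prd1}). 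With these three observations your own computation closes in a few lines and yields exactly (\ref{Crcv-1}). The authors remark explicitly that the untruncated route you attempted leads to weighted bounds that must be closed ``by some loop argument, unfortunately much more technical'' than the truncated one.
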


\begin{proof}
In order to simplify the presentation we first define some fictitious collision 
integrals $\tilde \cL$ and $\tilde \cQ$ 
$$
\ba
\tilde\cL g=\iint_{\bR^3\times\bS^2}
	(g+g_1-g'-g'_1)M_1 \tilde b(v-v_1,\omega)dv_1d\omega\,,
\\
\tilde\cQ(g,h)=\tfrac12\iint_{\bR^3\times\bS^2}
	(g'h'_1+g'_1h' -gh_1-g_1h) M_1\tilde b(v-v_1,\omega)dv_1d\omega,
\ea
$$
obtained from $\cL$ and $\cQ$ by replacing the original collision kernel $b$ 
with  
$$
\tilde b(z,\omega)=\frac{b(z,\omega)}{1+
	\displaystyle\int_{\bS^2}b(z,\omega_1) d\omega_1}\,.
$$

Start from the elementary formula
\be
\lb{Q-Exp}
\tilde\cL\left(\frac{\sqrt{G_\eps}-1}{\eps^2}\right)=
\tilde\cQ\left(\frac{\sqrt{G_\eps}-1}{\eps},\frac{\sqrt{G_\eps}-1}{\eps}\right)
-
\frac1{\eps^2}\tilde\cQ\left(\sqrt{G_\eps},\sqrt{G_\eps}\right)\,.
\ee
Multiplying both sides of this equation by $(I-\Pi)(\sqrt{G_\eps}-1)$ and using the  
spectral gap estimate (\ref{Crcv}) leads to
\be
\lb{Crcv-1stp}
\ba
\left\|\frac{\sqrt{G_\eps}-1}{\eps}
	-\Pi\frac{\sqrt{G_\eps}-1}{\eps}\right\|&_{L^2( Mdv)}
\\
&\le
\eps\left\|\tilde\cQ\left(\frac{\sqrt{G_\eps}-1}{\eps},
    	\frac{\sqrt{G_\eps}-1}{\eps}\right)\right\|_{L^2(Mdv)}
\\
&+
\eps\left\|\frac1{\eps^2}\tilde\cQ(\sqrt{G_\eps},\sqrt{G_\eps})\right\|_{L^2(Mdv)}\,.
\ea
\ee

Denote 
$$
d\tilde \mu(v,v_1,\omega)=MM_1 \tilde b(v-v_1,\omega) d\omega dvdv_1\,.
$$
By definition of $\tilde b$ , one has
$$
\int _{\bS^2}\tilde b(v-v_1,\omega)d\omega\le1\,.
$$
Hence $\cQ$ is continuous on $L^2(Mdv)$: by (\ref{Cnt-Q})
$$
\|\tilde \cQ(g,h)\|_{L^2(Mdv)}\le 2\|g\|_{L^2(Mdv)}\| h\|_{L^2(Mdv)}\,.
$$
(Notice that $\tilde b$ verifies (\ref{b-sym}) as does $b$).

Plugging this estimate in (\ref{Crcv-1stp}) leads to
\be
\lb{Crcv-2stp}
\ba
{}&\left\|\frac{\sqrt{G_\eps}-1}{\eps}-\Pi\frac{\sqrt{G_\eps}-1}{\eps}\right\|_{L^2( Mdv)}
\\
&\quad\le C\eps\left\|\frac{\sqrt{G_\eps}-1}{\eps}\right\|_{L^2( Mdv)}^2
 +
\eps\left\|\frac1{\eps^2}\cQ(\sqrt{G_\eps},\sqrt{G_\eps})\right\|_{L^2(Mdv)}
\ea
\ee

Finally, applying the Cauchy-Schwarz inequality as in the proof of (\ref{Cnt-Q}), 
one finds that
$$
\ba
\left\|\frac1{\eps^2}\tilde\cQ(\sqrt{G_\eps},\sqrt{G_\eps})\right\|^2_{L^2(Mdv)}
\le\left(\sup_{v\in\bR^3}\iint_{\bR^3\times\bS^2}\tilde b(v-v_1,\om)M_1dv_1d\om\right)
\\
\times
\frac1{\eps^4}\!\iiint_{\bR^3\times\bR^3\times\bS^2}\!\!
\left(\sqrt{G'_\eps G'_{\eps 1}}-\sqrt{G_\eps G_{\eps 1}}\right)^2\!d\tilde\mu(v,v_1,\om)
\\
\le\frac1{\eps^4}\!\iiint_{\bR^3\times\bR^3\times\bS^2}\!\!
\left(\sqrt{G'_\eps G'_{\eps 1}}-\sqrt{G_\eps G_{\eps 1}}\right)^2\!d\mu(v,v_1,\om)\,,
\ea
$$
since $0\le\tilde b\le b$. By the entropy production estimate (d) in Proposition
\ref{fluct-control}, the inequality above implies that
$$
\left\|\frac1{\eps^2}\tilde \cQ(\sqrt{G_\eps},\sqrt{G_\eps})\right\|_{L^2(Mdv)}
	=O(1)_{L^2_{t,x}}\,.
$$
This estimate and (\ref{Crcv-2stp}) entail the inequality (\ref{Crcv-1}).
\end{proof}

\smallskip
Notice  that we could have used directly $\cL$ and $\cQ$ instead of their 
truncated analogues $\tilde\cL$ and $\tilde\cQ$, obtaining bounds in 
weighted $L^2$ spaces by some loop argument, unfortunately much
more technical than the proof above.

\smallskip
The main result in this section --- and one of the key new estimate in this paper 
is

\begin{Prop}\label{UINTV-PR}
Under the assumptions of Theorem \ref{BNSW-TH}, for each $T>0$, each 
compact $K\subset\bR^3$, and each $p<2$, the family
$$
(1+|v|)^p\left(\frac{\sqrt{G_\eps}-1}{\eps}\right)^2
$$
is uniformly integrable in $v$ on $[0,T]\times K\times\bR^3$ with respect 
to the measure $dtdxMdv$. (This means that, for each $\eta>0$, there 
exists $\a>0$ such that, for each measurable $\varphi\equiv\varphi(x,v)$ 
verifying
$$
\|\varphi\|_{L^\infty_{x,v}}\le 1\hbox{ and }\|\varphi\|_{L^\infty_x(L^1_v)}\le\a\,,
$$
one has
$$
\int_0^T\int_K \int_{\bR^3}\varphi (1+|v|)^p
	\left(\frac{\sqrt{G_\eps}-1}{\eps}\right)^2Mdvdxdt\le\eta\,.)
$$
\end{Prop}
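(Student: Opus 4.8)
The plan is to split $\frac{\sqrt{G_\eps}-1}{\eps}=\Pi\frac{\sqrt{G_\eps}-1}{\eps}+(I-\Pi)\frac{\sqrt{G_\eps}-1}{\eps}$ and, using $(\frac{\sqrt{G_\eps}-1}{\eps})^2\le2(\Pi\frac{\sqrt{G_\eps}-1}{\eps})^2+2((I-\Pi)\frac{\sqrt{G_\eps}-1}{\eps})^2$, to treat the hydrodynamic and the kinetic parts separately. The hydrodynamic part is the easy one: $\Pi\frac{\sqrt{G_\eps}-1}{\eps}$ is an infinitesimal Maxwellian, i.e. a polynomial of degree $\le2$ in $v$ whose coefficients are the moments $\la\frac{\sqrt{G_\eps}-1}{\eps}\ra$, $\la v\frac{\sqrt{G_\eps}-1}{\eps}\ra$ and $\la(\tfrac13|v|^2-1)\frac{\sqrt{G_\eps}-1}{\eps}\ra$. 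By Cauchy--Schwarz and Proposition \ref{fluct-control}(b) these coefficients are bounded in $L^\infty_t(L^2_x(K))$, so that $(1+|v|)^p(\Pi\frac{\sqrt{G_\eps}-1}{\eps})^2\le C_\eps(t,x)\Psi(v)$ with $\Psi(v)=(1+|v|)^p(1+|v|^2)^2$ and $\int_0^T\int_K C_\eps\,dxdt\le C^{in}T$. Since $\Psi M$ is Schwartz, uniform integrability follows by cutting the $v$-integral at $|v|=R$: the core $|v|\le R$ is controlled by $\|\varphi\|_{L^\infty_x(L^1_v)}\le\a$, and the tail by $\int_{|v|>R}\Psi M\,dv\to0$.

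For the kinetic part I would first upgrade Lemma \ref{I-PI-LM} to the collision frequency weight. Repeating its proof with the genuine operators $\cL,\cQ$ in place of $\tilde\cL,\tilde\cQ$ --- that is, multiplying the identity (\ref{Q-Exp}) (which holds verbatim for $\cL,\cQ$) by $(I-\Pi)(\sqrt{G_\eps}-1)$, integrating, and invoking the self-adjointness of $\cL$, the weighted spectral gap (\ref{Crcv}), the weighted continuity (\ref{Cnt-Q}) of $\cQ$, and the dissipation bound (d) of Proposition \ref{fluct-control} --- yields
$$\left\|(I-\Pi)\frac{\sqrt{G_\eps}-1}{\eps}\right\|_{L^2((1+|v|)^\b Mdv)}\le O(\eps)_{L^2_{t,x}}+O(\eps)\left\|\frac{\sqrt{G_\eps}-1}{\eps}\right\|^2_{L^2((1+|v|)^\b Mdv)}.$$
The loop is closed as in the remark following Lemma \ref{I-PI-LM}: one bounds the weighted norm of the full fluctuation by its (Gaussian, hence harmless) $\Pi$-part plus its $(I-\Pi)$-part, and absorbs the latter using the $O(\eps)$ prefactor. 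This makes $(1+|v|)^\b((I-\Pi)\frac{\sqrt{G_\eps}-1}{\eps})^2$ uniformly integrable --- but only up to the weight $(1+|v|)^\b$, with $\b\le1$.

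The crux --- and the step I expect to be the main obstacle --- is to raise the exponent from $\b$ to an arbitrary $p<2$, since every collisional quantity (the kernel $b$, the dissipation, the form $\cL$) carries at most the weight $(1+|v|)^\b$. I would exploit the decomposition $\cL=a(|v|)-\cK$ with $\cK$ compact: writing $a(|v|)(I-\Pi)\frac{\sqrt{G_\eps}-1}{\eps}=\cK\frac{\sqrt{G_\eps}-1}{\eps}+\cL\frac{\sqrt{G_\eps}-1}{\eps}$ and using that for hard cutoff potentials the smoothing operator $\cK$ trades polynomial growth in $v$ for decay, one can feed the weight-$\b$ estimate back through $a(|v|)^{-1}\cK$ and bootstrap the weight upward. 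The process is capped precisely at the quadratic scale: the relative entropy bound, through Young's inequality (\ref{Young<}) with $\zeta=\eps(1+|v|^2)$ --- for which $\la h^*(\zeta)\ra=O(\eps^2)$ --- controls $\frac{\sqrt{G_\eps}-1}{\eps}$ only against weights strictly below $(1+|v|^2)$, which is why the statement stops short of $p=2$.

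In parallel the large-amplitude region must be dealt with: on $\{G_\eps\le\Lambda\}$ the pointwise comparison $(\frac{\sqrt{G_\eps}-1}{\eps})^2\le h(\eps g_\eps)/\eps^2\le C_\Lambda(\frac{\sqrt{G_\eps}-1}{\eps})^2$ (a refinement of the identity used for Proposition \ref{fluct-control}(b)) identifies the square with the entropy density, whose unweighted uniform integrability is an $L\log L$ consequence of the entropy bound; on $\{G_\eps>\Lambda\}$ the super-quadratic growth of $h$ renders the contribution uniformly small once $\Lambda$ is large. The delicate point throughout is to transfer the $\eps$-smallness, which is available only in the low weight $(1+|v|)^\b$, into equi-integrability in the high weight $(1+|v|)^p$, $p<2$; everything else is a routine adaptation of the machinery of Lemma \ref{I-PI-LM} and of the Gaussian tail estimate.
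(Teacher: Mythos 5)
Your hydrodynamic part is fine, and your diagnosis that everything hinges on raising the weight from $(1+|v|)^\b$ to $(1+|v|)^p$, $p<2$, is accurate --- but the step you propose there is exactly where the argument breaks down. First, re-running Lemma \ref{I-PI-LM} with the genuine operators $\cL,\cQ$ is circular: the weighted continuity (\ref{Cnt-Q}) bounds $\cQ(f,f)$ in $L^2(a^{-1}Mdv)$ only in terms of $\|f\|^2_{L^2((1+|v|)^\b Mdv)}$, a weighted norm which the entropy bound does not control, so the inequality cannot be closed without the very estimate you are trying to prove; this is precisely why the paper replaces $b$ by the normalized kernel $\tilde b$, and why the authors remark that the direct route needs a ``loop argument, unfortunately much more technical''. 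Second, and more decisively, the bootstrap through $a^{-1}\cK$ cannot raise the weight: the only control on $\cL\frac{\sqrt{G_\eps}-1}{\eps}$ comes from the entropy production via the Cauchy--Schwarz computation of (\ref{Cnt-Q}), and that computation saturates at the weight $a^{-1}$ --- with any extra factor $(1+|v|)^m$, $m>0$, the constant $\sup_v\,(1+|v|)^m a(|v|)^{-1}\iint b\,M_1dv_1d\om$ is infinite, since $\iint b\,M_1dv_1d\om = a$. So the source term of your iteration is never bounded in a space with weight beyond $a^{-1}$, and no gain accumulates. Note also that (\ref{Grad-ctff}) only guarantees $a_-\le a\le a_+(1+|v|)^\b$ with no lower growth, and that $\b=0$ (Maxwell molecules) is included in the theorem, so even your preliminary ``weight $\b$'' step and the per-iteration gain from $a^{-1}$ can be empty, while the conclusion must hold for every $p<2$. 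Finally, your fallback claim that the unweighted uniform integrability in $v$ is an $L\log L$ consequence of the entropy bound is false: $h(\eps g_\eps)/\eps^2$ dominates the square strictly only where $G_\eps$ is large, i.e.\ where the square is of order $\eps^{-2}$, so no equi-integrability uniform in $\eps$ follows --- the case $p=0$ occupied an entire technical section of \cite{GSRInvMath}.

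What the paper actually does --- and what your sketch is missing --- is to make the Young inequality the engine rather than a side remark explaining the threshold $p<2$. Writing $f=\frac{\sqrt{G_\eps}-1}{\eps}$, it uses the multiplicative decomposition $f^2=f\,\Pi f+f\,(f-\Pi f)$, not your quadratic splitting. The first term is bounded in $L^\infty_t(L^1_x(L^r(Mdv)))$ for every $r<2$ (Gaussian moments of $\Pi f$), hence uniformly integrable in $v$ with any polynomial weight. For the second, Young's inequality (\ref{Young<}) yields $(1+|v|)^{p}f^2\le\frac{\de^2}{\eps^2}h(G_\eps-1)+\frac{\de^2}{\eps^2}\exp\bigl((1+|v|)^p/\de^2\bigr)$, and the decisive point is that for $p<2$ the function $\exp\bigl((1+|v|)^p/\de^2\bigr)$ belongs to every $L^q(Mdv)$: the Gaussian beats the stretched exponential. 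The resulting $O(1/\eps)$-sized piece is then paired with the \emph{unweighted} smallness $\|f-\Pi f\|_{L^2(Mdv)}=O(\eps)_{L^1_{loc}(dtdx)}$ from Lemma \ref{I-PI-LM} and Proposition \ref{fluct-control}~(b), while a leftover $\de\,J$ is absorbed into the left-hand side. Thus no weighted collisional estimate is needed anywhere: the full weight $(1+|v|)^p$, $p<2$, is purchased entirely from the relative entropy via $h$--$h^*$ duality against the Gaussian measure, which is the mechanism absent from your proposal.
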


\begin{proof}

Start from the decomposition
\begin{equation}
\label{new-decomposition}
\begin{aligned}
 J:&=(1+|v|)^p\left(\frac{\sqrt{G_\eps}-1}{\eps}\right)^2 
 =\left(\frac{\sqrt{G_\eps}-1}{\eps}\right) 
 	(1+|v|)^p \Pi \left(\frac{\sqrt{G_\eps}-1}{\eps}\right)
\\
&+ (1+|v|)^{\frac p2} \left(\frac{\sqrt{G_\eps}-1}{\eps}\right) 
(1+|v|)^{\frac p2} \left(\!\! \left(\!\frac{\sqrt{G_\eps}-1}{\eps}\!\right)
	-\Pi \left(\!\frac{\sqrt{G_\eps}-1}{\eps}\!\right)\!\!\right)
\end{aligned}
\end{equation}

\bigskip
We recall from the entropy bound (b) in Proposition \ref{fluct-control} that
$$
\left(\frac{\sqrt{G_\eps}-1}{\eps}\right) =O(1)_{L^\infty_t(L^2(dxMdv))}
$$
so that, by definition (\ref{Dfnt-Pi}) of the hydrodynamic projection $\Pi$
\begin{equation}
\label{ContPiL2-L2vp}
\Pi\left(\frac{\sqrt{G_\eps}-1}{\eps}\right) 
=O(1)_{L^\infty_t(L^2_x(L^q(Mdv)))}
\end{equation}
for all $q<+\infty$. Therefore the first term in the right-hand side of 
(\ref{new-decomposition}) satisfies
\begin{equation}
\label{new-est1}
I=\left|\frac{\sqrt{G_\eps}-1}{\eps}\right| 
(1+|v|)^p \left|\Pi\frac{\sqrt{G_\eps}-1}{\eps}\right|
=O(1)_{L^\infty_t(L^1_x(L^r(Mdv)))}
\end{equation}
for all $0\le p<+\infty$ and $1\le r<2$.

\bigskip
In order to estimate the second term in the right-hand side of 
(\ref{new-decomposition}), we first remark that, for each $\delta>0$, 
each $p<2$ and each $q<+\infty$, there exists some $C=C(p,q,\delta)$ 
such that
\begin{equation}
\label{new-young}
(1+|v|)^{p/2} \left(\frac{\sqrt{G_\eps}-1}{\eps}\right) 
=O(\delta)_{L^\infty_t (L^2(dxMdv))} 
	+O\left(\frac{C(p,q,\delta)}{\eps}\right)_{L^\infty_{t,x} (L^q(Mdv))}
\end{equation}
Indeed, by Young's inequality and Proposition \ref{fluct-control} (a),
$$
\begin{aligned}
(1+|v|)^{p} \left(\frac{\sqrt{G_\eps}-1}{\eps}\right) ^2 
&\leq \frac{\delta^2} {\eps^2} |G_\eps-1| \left(\frac{(1+|v|)^p}{\delta^2}\right) 
\\
& \leq\frac{\delta^2}{\eps^2} h(G_\eps-1) 
	+\frac{\delta^2}{\eps^2} h^*\left(\frac{(1+|v|)^p}{\delta^2}\right) 
\\
&=O(\delta^2)_{L^\infty_t (L^1(dxMdv))} 
	+\frac{\delta^2}{\eps^2}\exp\left(\frac{(1+|v|)^p}{\delta^2}\right)
\end{aligned}
$$

We next use (\ref{new-young}) with the two following observations: first,
the obvious continuity statement (\ref{ContPiL2-L2vp}). Also, because of 
(\ref{Crcv-1}) and the entropy bound (b) in Proposition \ref{fluct-control}, 
one has
\begin{equation}
\label{Crcv-1'}
\left\|\frac{\sqrt{G_\eps}-1}{\eps}
	-\Pi\frac{\sqrt{G_\eps}-1}{\eps}\right\|_{L^2(Mdv)}
		=O(\eps)_{L^1_{loc}(dtdx)}
\end{equation}

Hence
$$
\begin{aligned}
(1+|v|)^{\frac p2} \left|\frac{\sqrt{G_\eps}-1}{\eps}\right| 
(1+|v|)^{\frac p2} \left|\!\! \left(\!\frac{\sqrt{G_\eps}-1}{\eps}\!\right)
	-\Pi \left(\!\frac{\sqrt{G_\eps}-1}{\eps}\!\right)\!\!\right|
\\
\le\frac{\delta}{\eps} h(G_\eps-1)^{1/2}
(1+|v|)^{p/2}\left|\!\! \left(\!\frac{\sqrt{G_\eps}-1}{\eps}\!\right)
	-\Pi \left(\!\frac{\sqrt{G_\eps}-1}{\eps}\!\right)\!\!\right|
\\
+ \frac{\delta}{\eps}(1+|v|)^{p/2}\exp\left(\frac{(1+|v|)^p}{2\delta^2}\right) 
	\left|\!\! \left(\!\frac{\sqrt{G_\eps}-1}{\eps}\!\right)
	 -\Pi \left(\!\frac{\sqrt{G_\eps}-1}{\eps}\!\right)\!\!\right|
\\
=:II+III\,.
\end{aligned}
$$

Now
$$
\begin{aligned}
II\le\frac{\delta}{2\eps^2} h(G_\eps-1)
+
\delta(1+|v|)^{p}\left|\Pi\frac{\sqrt{G_\eps}-1}{\eps}\right|^2
+
\delta(1+|v|)^{p}\left|\frac{\sqrt{G_\eps}-1}{\eps}\right|^2
\\
=O(\delta)_{L^\infty_t(L^1(Mdvdx))}+O(\delta)_{L^\infty_t(L^1(Mdvdx))}+\delta J
\end{aligned}
$$

On the other hand
$$
\begin{aligned}
\|III\|_{L^1_{loc}(dtdx;L^r(Mdv))}
\le 
\delta\left\|(1+|v|)^{p/2}\exp\left(\frac{(1+|v|)^p}{2\delta^2}\right)\right\|_{L^q(Mdv)}
\\
\times\left\|\frac1\eps\left(\!\frac{\sqrt{G_\eps}-1}{\eps}\!\right)
	 -\Pi \left(\!\frac{\sqrt{G_\eps}-1}{\eps}\!\right)\right\|_{L^1_{loc}(dtdx;L^2(Mdv))}
\\
=O(\delta C(p,q,\delta))
\end{aligned}
$$
with $r=\frac{2q}{q+2}$.

Putting all these controls together shows that
\be
\lb{BoundJ}
\begin{aligned}
J\le I+II+III
=O(1)_{L^\infty_t(L^1_x(L^r(Mdv)))}
\\
+O(\delta)_{L^\infty_t(L^1(Mdvdx))}
	+O(\delta)_{L^\infty_t(L^1(Mdvdx))}+\delta J
\\
+O(\delta C(p,q,\delta))_{L^1_{loc}(dtdx;L^r(Mdv))}
\end{aligned}
\ee
i.e.
$$
\begin{aligned}
(1-\delta)(1+|v|)^p\left(\frac{\sqrt{G_\eps}-1}{\eps}\right)^2
\le
O(1)_{L^\infty_t(L^1_x(L^r(Mdv)))} 
\\
+O(\delta C(p,q,\delta))_{L^1_{loc}(dtdx;L^r(Mdv))}
+O(\delta)_{L^\infty_t(L^1(Mdvdx))}
\end{aligned}
$$
which entails the uniform integrability in $v$ stated in Proposition \ref{UINTV-PR}.
\end{proof}

\smallskip
\noindent
\textbf{Remark:} replacing the estimate for II above with
$$
\begin{aligned}
II\le\frac{8\delta}{2\eps^2} h(G_\eps-1)
+
\frac{\delta}{8}(1+|v|)^{p}\left|\Pi\frac{\sqrt{G_\eps}-1}{\eps}\right|^2
+
\frac{\delta}{8}(1+|v|)^{p}\left|\frac{\sqrt{G_\eps}-1}{\eps}\right|^2
\\
=O(\delta)_{L^\infty_t(L^1(Mdvdx))}+O(\delta)_{L^\infty_t(L^1(Mdvdx))}
+\frac{\delta}{8} J
\end{aligned}
$$
and choosing $\de=4$ in (\ref{BoundJ}) shows that
$$
(1+|v|)^2\left(\frac{\sqrt{G_\eps}-1}\eps\right)^2
	\hbox{ is bounded in }L^1_{loc}(dtdx;L^1(Mdv))\,.
$$
In \cite{BGL2}, the Navier-Stokes limit of the Boltzmann equation is established 
assuming the uniform integrability in $[0,T]\times K\times\bR^3$ for the measure
$dtdxMdv$ of a quantity analogous to the one considered in this bound. As we shall 
see, the Navier-Stokes-Fourier limit of the Boltzmann equation is derived in the 
present paper by using only the weaker information in Proposition \ref{UINTV-PR}.


\section{Compactness results for the number density fluctuations}


The following result is the main technical step in the present paper.

\begin{Prop}\label{UINT-PR}
Under the assumptions in Theorem \ref{BNSW-TH}, for each $T>0$, each compact 
$K\subset\bR^3$ and each $p<2$, the family of functions
$$
\left(\frac{\sqrt{G_\eps}-1}{\eps}\right)^2(1+|v|)^p
$$
is uniformly integrable on $[0,T]\times K\times\bR^3$ for the measure $dtdxMdv$.
\end{Prop}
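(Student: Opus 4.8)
The goal is to promote the $v$-uniform integrability of Proposition~\ref{UINTV-PR} to genuine uniform integrability for the product measure $dtdxMdv$; the only thing to add is the exclusion of concentration in the $(t,x)$ variables, and this is where the transport structure enters. Throughout, write $\psi_\eps:=\frac{\sqrt{G_\eps}-1}{\eps}$. I would first discard large velocities: feeding the test function $\varphi=\indc_{|v|^2>R}$ (sup norm $\le1$, with $L^\infty_x(L^1_v)$ norm $\int_{|v|^2>R}Mdv\to0$ as $R\to\infty$) into Proposition~\ref{UINTV-PR} shows that $\int_0^T\!\int_K\!\int_{|v|^2>R}\psi_\eps^2(1+|v|)^pMdvdxdt$ is uniformly small in $\eps$ once $R$ is large. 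Since $(1+|v|)^p$ is bounded on $\{|v|^2\le R\}$, it remains to prove uniform integrability of $\psi_\eps^2\,\indc_{|v|^2\le R}$ on $[0,T]\times K\times\bR^3$.

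The plan is then to isolate the hydrodynamic content of $\psi_\eps$. Writing $\psi_\eps=\Pi\psi_\eps+(\psi_\eps-\Pi\psi_\eps)$ and expanding the square produces the purely hydrodynamic term $(\Pi\psi_\eps)^2$, a cross term, and the remainder $(\psi_\eps-\Pi\psi_\eps)^2$. For the hydrodynamic term, recall from (\ref{Dfnt-Pi}) that $\Pi\psi_\eps=\la\psi_\eps\ra+\la v\psi_\eps\ra\cdot v+\la(\tfrac13|v|^2-1)\psi_\eps\ra\tfrac12(|v|^2-3)$, so on $\{|v|^2\le R\}$ one has the pointwise bound $(\Pi\psi_\eps)^2\le C_R(\rho_\eps^2+|u_\eps|^2+\th_\eps^2)$ in terms of the moments $\rho_\eps=\la\psi_\eps\ra$, $u_\eps=\la v\psi_\eps\ra$, $\th_\eps=\la(\tfrac13|v|^2-1)\psi_\eps\ra$. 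As these moments are independent of $v$, testing against functions supported on sets of small $dtdxMdv$-measure reduces the uniform integrability of $(\Pi\psi_\eps)^2\indc_{|v|^2\le R}$ to the uniform integrability in $(t,x)$ on $[0,T]\times K$ of the squared moments $\rho_\eps^2$, $|u_\eps|^2$ and $\th_\eps^2$; the latter is in turn guaranteed by relative compactness of $\rho_\eps,u_\eps,\th_\eps$ in $L^2_{loc}(dtdx)$, since strong $L^2$ compactness of a family forces uniform integrability of its squares.

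For the cross and remainder terms I would invoke Lemma~\ref{I-PI-LM}: by the spectral gap the non-hydrodynamic part is controlled through (\ref{Crcv-1'}), namely $\|\psi_\eps-\Pi\psi_\eps\|_{L^2(Mdv)}$ is $O(\eps)$ in $L^1_{loc}(dtdx)$, up to a quadratic contribution of the form $\eps\|\psi_\eps\|^2_{L^2(Mdv)}$ arising from the bilinear operator $\cQ$. Both the cross term and $(\psi_\eps-\Pi\psi_\eps)^2$ will be uniformly integrable --- indeed asymptotically negligible --- provided one can control $\|\psi_\eps-\Pi\psi_\eps\|_{L^2(Mdv)}$ in $L^2_{loc}(dtdx)$ rather than merely $L^1_{loc}$; by (\ref{Crcv-1'}) this amounts to controlling the quadratic quantity $\eps\|\psi_\eps\|^2_{L^2(Mdv)}$ in $L^2_{loc}(dtdx)$, i.e. to a space-time integrability bound on $\psi_\eps$ strictly better than the $L^\infty_t(L^2(Mdvdx))$ estimate of Proposition~\ref{fluct-control}(b).

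The main obstacle is thus twofold, and both parts rest on the free transport operator. \emph{Velocity averaging} supplies the compactness of the moments: although the fluctuation equation (\ref{BoltzFluct}) carries the singular relaxation term $\frac1\eps\cL\psi_\eps$, one works instead with the renormalized, velocity-truncated moment identities (\ref{Mmnt-xi}), where the collisional right-hand side is organized into fluxes and conservation defects that remain under control, so that an averaging lemma applies and yields relative compactness of $\rho_\eps,u_\eps,\th_\eps$ in $L^2_{loc}(dtdx)$. \emph{Dispersion} of the free transport flow on $\bR^3$ supplies the missing space-time integrability: the dispersive gain of integrability for macroscopic averages is what lets one absorb the quadratic remainder $\eps\|\psi_\eps\|^2_{L^2(Mdv)}$ in $L^2_{loc}$ and, at the same time, rules out the escape of mass to infinity that would otherwise spoil local compactness on the unbounded domain $\bR^3$. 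Once the moments are known to be strongly compact in $L^2_{loc}(dtdx)$ and the non-hydrodynamic remainder is shown to vanish in $L^2_{loc}(dtdx;L^2(Mdv))$, the uniform integrability of $\psi_\eps^2(1+|v|)^p$ follows, completing the proof.
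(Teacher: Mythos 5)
Your reduction has a genuine circularity at its core. You propose to obtain the uniform integrability in $(t,x)$ of the squared moments $\rho_\eps^2,|u_\eps|^2,\th_\eps^2$ from strong $L^2_{loc}(dtdx)$ compactness of the moments via velocity averaging, but the $L^1$ averaging lemma available in this setting (Theorem~\ref{B2-thm}) takes as a \emph{hypothesis} precisely that $|\varphi_\eps|^2$ is locally uniformly integrable with respect to $dtdxMdv$ --- the very conclusion of Proposition~\ref{UINT-PR}. This is why, in the paper, the equicontinuity statement of Proposition~\ref{CPTX-PR} is proved \emph{after} and \emph{from} Proposition~\ref{UINT-PR}, not the other way around. (Classical $L^2$ averaging does not apply either, since the fluctuation equation carries the singular term $\frac1\eps\cL\psi_\eps$, and the renormalized moment equations (\ref{Mmnt-xi}) have only $L^1$-controlled right-hand sides.) Moreover, strong compactness of the moments in $L^2_{loc}(dtdx)$ is not merely unavailable at this stage: it fails in general for $\rho_\eps$ and for the gradient part of $u_\eps$ because of acoustic oscillations at frequency $O(1/\eps)$ in $t$; the paper recovers strong compactness only for $P\la v_{K_\eps}g_\eps\g_\eps\ra$ and the temperature (Proposition~\ref{CPTPU-PR}), and must dispose of the gradient part by the Lions--Masmoudi compensated compactness argument. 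Your treatment of the non-hydrodynamic remainder has the same flaw in a second guise: upgrading (\ref{Crcv-1'}) to an $L^2_{loc}(dtdx)$ bound on $\|\psi_\eps-\Pi\psi_\eps\|_{L^2(Mdv)}$ requires controlling $\eps\|\psi_\eps\|^2_{L^2(Mdv)}$ in $L^2_{loc}(dtdx)$, an estimate that appears nowhere and that ``dispersion'' does not supply (the kinetic Strichartz gains degenerate under the scaling $\eps\d_t+v\cdot\grad_x$); in the paper, the $L^2_{loc}(dtdx;L^2(Mdv))$ relaxation statement is Proposition~\ref{RLXT-PR}, which is \emph{deduced from} Proposition~\ref{UINT-PR} (uniform integrability of the square plus convergence in measure), again inverting your proposed order of deduction.

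The missing idea is that one should not split $\psi_\eps$ at all, but make the \emph{square itself} ride a transport equation and transfer uniform integrability from $v$ to $(t,x)$ wholesale by the dispersion result, Theorem~\ref{B1-thm}, whose hypotheses are exactly: an $L^1_{loc}$ bound, uniform integrability in $v$ alone (your starting point, Proposition~\ref{UINTV-PR}), and an $L^1_{loc}$ bound on $(\eps\d_t+v\cdot\grad_x)$ of the quantity. Concretely, the paper renormalizes with $\Gamma_\eps(Z)=(\sqrt{\eps^\a+Z}-1)/\eps$, $\a\in(1,2)$, because the square root is not an admissible nonlinearity at $Z=0$, proves in Lemma~\ref{ADVC-LM} (using only the entropy production bound, the Cauchy--Schwarz argument of (\ref{Cnt-Q}), and Proposition~\ref{UINTV-PR}) that the streaming operator applied to $\frac{\sqrt{\eps^\a+G_\eps}-1}{\eps}$ is controlled, and then applies Theorem~\ref{B1-thm} to the truncated square
$$
\phi^\de_\eps=\left(\frac{\sqrt{\eps^\a+G_\eps}-1}{\eps}\right)^2
\g\!\left(\eps\de\,\frac{\sqrt{\eps^\a+G_\eps}-1}{\eps}\right),
$$
for which $(\eps\d_t+v\cdot\grad_x)\phi^\de_\eps=O(1/\de)_{L^1_{loc}(Mdvdxdt)}$. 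The truncations are then removed by elementary entropy estimates: the $\de$-truncation costs $O(1/|\ln\de|)$ in $L^\infty_t(L^1(Mdvdx))$, and the $\eps^\a$-regularization costs $O(\eps^{\a-1})+O(\eps^{\a/2})$. Your opening step (discarding $\{|v|^2>R\}$ via Proposition~\ref{UINTV-PR}) is correct and is indeed how the large-velocity weight is handled at the very end of the paper's proof, but the heart of the argument --- Lemma~\ref{ADVC-LM} plus Theorem~\ref{B1-thm} applied to the renormalized square --- is absent from your proposal, and the scheme you substitute for it cannot be repaired without it.
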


This Proposition is based on the uniform integrability in $v$ of that same quantity, 
established in Proposition \ref{UINTV-PR}, together with a bound on the streaming 
operator applied to (a variant of) the number density fluctuation (stated in Lemma 
\ref{ADVC-LM}). Except for some additional truncations, the basic principle of the 
proof is essentially the same as explained in  Lemma 3.6 of \cite{GSRInvMath} 
(which is recalled in Appendix B). In other words, while the result of Proposition 
\ref{UINTV-PR} provides some kind of regularity in $v$ only for the number density 
fluctuation, the bound on the free transport part of the Boltzmann equation gives 
the missing regularity (in the $x$-variable).

The technical difficulty comes from the fact that the square-root renormalization 
$\Gamma(Z)=\sqrt{Z}$ is not admissible for the Boltzmann equation due to the 
singularity at $Z=0$. We will therefore use an approximation of the square-root,
namely $z\mapsto\sqrt{z+\eps^\alpha}$ for some $\alpha\in ]1,2[$.

\begin{Lem}\label{ADVC-LM}
Under the assumptions in Theorem \ref{BNSW-TH}, for each $\a>0$, one has
$$
\begin{aligned}
(\eps\d_t+v\cdot\grad_x)\frac{\sqrt{\eps^\a+G_\eps}-1}{\eps}
&=
O(\eps^{2-\a/2})_{L^1(Mdvdxdt)}
\\
&+O(1)_{L^2((1+|v|)^{-\b}Mdvdxdt)}
\\
&+
O(\eps)_{L^1_{loc}(dtdx;L^2((1+|v|)^{-\b}Mdv))}\,.
\end{aligned}
$$
\end{Lem}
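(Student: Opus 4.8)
The plan is to read the left-hand side directly off the renormalized Boltzmann equation and then decompose the resulting collision term. Apply the renormalized formulation (\ref{boltz-renormalized}) with the nonlinearity $\Gamma(Z)=\sqrt{\eps^\a+Z}$, for which $\Gamma'(Z)=\tfrac12(\eps^\a+Z)^{-1/2}$ is bounded by $\tfrac12\eps^{-\a/2}$ and decays like $(1+Z)^{-1/2}$ at infinity, hence is admissible for each fixed $\eps$. Using $\cB(F_\eps,F_\eps)=M\cQ(G_\eps,G_\eps)$, dividing by $M$, and noting that $(\eps\d_t+v\cdot\grad_x)\eps^{-1}(\sqrt{\eps^\a+G_\eps}-1)=\eps^{-2}(\eps^2\d_t+\eps v\cdot\grad_x)\sqrt{\eps^\a+G_\eps}$, one obtains
$$
(\eps\d_t+v\cdot\grad_x)\frac{\sqrt{\eps^\a+G_\eps}-1}{\eps}
=\frac1{2\eps^2\sqrt{\eps^\a+G_\eps}}\,\cQ(G_\eps,G_\eps)\,,
$$
so the whole statement reduces to decomposing this collision term.

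Next I would factor the collision integral through the entropy dissipation. Write $\cQ(G_\eps,G_\eps)(v)=\iint(G'_\eps G'_{\eps1}-G_\eps G_{\eps1})\,b\,M_1dv_1d\om$ and set $q_\eps:=\eps^{-2}(\sqrt{G'_\eps G'_{\eps1}}-\sqrt{G_\eps G_{\eps1}})$, which by estimate (d) of Proposition \ref{fluct-control} satisfies $\int_0^\infty\int_{\bR^3}\lA q_\eps^2\rA\,dxdt\le C^{in}$. Factoring $G'_\eps G'_{\eps1}-G_\eps G_{\eps1}=\eps^2 q_\eps(\sqrt{G'_\eps G'_{\eps1}}+\sqrt{G_\eps G_{\eps1}})$ and using the identity $\sqrt{G'_\eps G'_{\eps1}}+\sqrt{G_\eps G_{\eps1}}=2\sqrt{G_\eps G_{\eps1}}+\eps^2q_\eps$ gives
$$
\frac{\cQ(G_\eps,G_\eps)}{2\eps^2\sqrt{\eps^\a+G_\eps}}
=\frac{\sqrt{G_\eps}}{\sqrt{\eps^\a+G_\eps}}\iint q_\eps\sqrt{G_{\eps1}}\,b\,M_1dv_1d\om
+\frac{\eps^2}{2\sqrt{\eps^\a+G_\eps}}\iint q_\eps^2\,b\,M_1dv_1d\om\,.
$$
This rewriting is the crux: it trades the gain contribution $\sqrt{G'_\eps G'_{\eps1}}$ for a loss-type term plus a remainder purely quadratic in $q_\eps$.

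For the loss-type term I would use $\sqrt{G_\eps}/\sqrt{\eps^\a+G_\eps}\le1$ and split $\sqrt{G_{\eps1}}=1+(\sqrt{G_{\eps1}}-1)$ with $\sqrt{G_{\eps1}}-1=\eps\cdot\eps^{-1}(\sqrt{G_{\eps1}}-1)$. The piece carrying the constant $1$ is bounded, after Cauchy--Schwarz in $(v_1,\om)$ against $b\,M_1$, by $a(|v|)^{1/2}\big(\iint q_\eps^2\,b\,M_1dv_1d\om\big)^{1/2}$; since $a(|v|)\le a_+(1+|v|)^\b$, the weight $(1+|v|)^{-\b}$ exactly absorbs the collision-frequency growth and (d) yields the $O(1)_{L^2((1+|v|)^{-\b}Mdvdxdt)}$ contribution. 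The piece carrying $\eps\cdot\eps^{-1}(\sqrt{G_{\eps1}}-1)$ is estimated the same way, now paired with $\eps^{-1}(\sqrt{G_{\eps1}}-1)$; using (d) for $q_\eps$, the bound $\int(1+|v|)^\b\big(\tfrac{\sqrt{G_\eps}-1}{\eps}\big)^2Mdv\in L^1_{loc}(dtdx)$ from the Remark following Proposition \ref{UINTV-PR} (valid since $\b<2$), and a Cauchy--Schwarz in $(t,x)$, produces the $O(\eps)_{L^1_{loc}(dtdx;L^2((1+|v|)^{-\b}Mdv))}$ term. For the quadratic remainder I simply bound $1/\sqrt{\eps^\a+G_\eps}\le\eps^{-\a/2}$, so its $L^1(Mdvdxdt)$ norm is at most $\tfrac12\eps^{2-\a/2}\int_0^\infty\int_{\bR^3}\lA q_\eps^2\rA\,dxdt=O(\eps^{2-\a/2})$.

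The main obstacle is precisely the gain term $\sqrt{G'_\eps G'_{\eps1}}$: unlike the loss term it shares no variable with the prefactor $G_\eps$, so the cancellation $\sqrt{G_\eps}/\sqrt{\eps^\a+G_\eps}\le1$ is unavailable and one would be forced onto the crude $\eps^{-\a/2}$ bound with no compensating power. The identity $\sqrt{G'_\eps G'_{\eps1}}+\sqrt{G_\eps G_{\eps1}}=2\sqrt{G_\eps G_{\eps1}}+\eps^2q_\eps$ removes this difficulty, converting the gain contribution into twice the already-handled loss term plus a remainder in which the extra factor $\eps^2$ compensates the $\eps^{-\a/2}$ loss; this is exactly where the exponent $2-\a/2$ originates.
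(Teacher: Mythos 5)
Your proposal is correct and follows essentially the same route as the paper: your factorization $G'_\eps G'_{\eps1}-G_\eps G_{\eps1}=2\eps^2 q_\eps\sqrt{G_\eps G_{\eps1}}+\eps^4 q_\eps^2$ is exactly the paper's decomposition into $Q^2_\eps+Q^1_\eps$ in (\ref{Dfnt-Q1Q2}), and you then estimate each piece precisely as in the paper --- the crude bound $\sqrt{\eps^\a+G_\eps}\ge\eps^{\a/2}$ with the entropy production estimate (d) for the quadratic remainder, and the split $\sqrt{G_{\eps1}}=1+\eps\,\eps^{-1}(\sqrt{G_{\eps1}}-1)$ with Cauchy--Schwarz, the Grad cutoff bound, and the weighted control from Proposition \ref{UINTV-PR} for the loss-type term. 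No gaps; the only cosmetic difference is that you phrase the key identity as a difference-of-squares factorization rather than writing $a^2-b^2=(a-b)^2+2b(a-b)$ directly.
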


\begin{proof}[Proof of Lemma \ref{ADVC-LM}]
Start from the renormalized form  of the scaled Boltzmann equation (\ref{ScldBoltzEq}),
with normalizing function
$$
\Gamma_\eps(Z)=\frac{\sqrt{\eps^\a+Z}-1}{\eps}\,.
$$
This equation can be written as
\begin{equation}
\label{Nrml-Sqrt-Bltz}
(\eps\d_t+v\cdot\grad_x)\frac{\sqrt{\eps^\a+G_\eps}-1}{\eps}
=\frac1{\eps^2}\frac1{2\sqrt{\eps^\a+G_\eps}}\cQ(G_\eps,G_\eps)
=Q^1_\eps+Q^2_\eps\,,
\end{equation}
with
\begin{equation}
\label{Dfnt-Q1Q2}
\begin{aligned}
Q^1_\eps&\!=\!\frac1{\eps^2}\frac1{2\sqrt{\eps^\a+G_\eps}}
\\
&\times\iint\left(\sqrt{G'_\eps G'_{\eps 1}}-\sqrt{G_\eps G_{\eps 1}}\right)^2
b(v-v_1,\om)d\om M_1dv_1\,,
\\
Q^2_\eps&\!=\!\frac1{\eps^2}\frac1{\sqrt{\eps^\a+G_\eps}}
\\
&\times\iint\!\!\sqrt{G_\eps G_{\eps 1}}
\left(\!\!\sqrt{G'_\eps G'_{\eps 1}}-\sqrt{G_\eps G_{\eps 1}}\right)
b(v-v_1,\om)d\om M_1dv_1\,.
\end{aligned}
\end{equation}
The entropy production estimate (d) in Proposition \ref{fluct-control} and the obvious inequality
$$
\sqrt{\eps^\a+G_\eps}\ge\eps^{\a/2}
$$
imply that
\begin{equation}
\label{Estm-Q1}
\|Q^1_\eps\|_{L^1(Mdvdxdt)}\le \tfrac12C^{in}\eps^{2-\a/2}\,.
\end{equation}
On the other hand
$$
Q^2_\eps=\frac{\sqrt{G_\eps}}{\sqrt{\eps^\a+G_\eps}}\iint\sqrt{G_{\eps 1}}
\frac{\sqrt{G'_\eps G'_{\eps 1}}-\sqrt{G_\eps G_{\eps 1}}}{\eps^2}b(v-v_1,\om)d\om M_1dv_1\,.
$$
Write
$$
\sqrt{G_{\eps 1}}=1+\eps\frac{\sqrt{G_{\eps 1}}-1}{\eps}\,.
$$
Apply the Cauchy-Schwarz inequality as in the proof of (\ref{Cnt-Q}): then
$$
\ba
{}&\left\|\iint\sqrt{G_{\eps 1}}\frac{\sqrt{G'_\eps G'_{\eps 1}}-\sqrt{G_\eps G_{\eps 1}}}{\eps^2}
b(v-v_1,\om)M_1dv_1d\om\right\|_{L^2((1+|v|)^{-\b}Mdv)}
\\
&\le\sup_{v\in \bR^3} \!\!\left( \!(1+|v|)^{-\b}
\!\!\!\iint\!\!b(v-v_1,\omega)M_1dv_1d\om\!\!\right)^{1/2} 
\\
&\qquad\qquad\times\!
\LA\!\left(\!\frac{\sqrt{G'_\eps G'_{\eps 1}}-\sqrt{G_\eps G_{\eps 1}}}{\eps^2}\!\right)^2\!\RA^{1/2} 
\\
&+\eps
\sup_{v\in \bR^3} \left( (1+|v|)^{-\b} \iint M_1\left(\frac{\sqrt{G_{\eps 1}}-1}{\eps}\right)^2
b(v-v_1,\omega) dv_1 d\omega \right)^{1/2}
\\
&\qquad\qquad\times
\LA\left(\frac{\sqrt{G'_\eps G'_{\eps 1}}-\sqrt{G_\eps G_{\eps 1}}}{\eps^2}\right)^2\RA^{1/2} 
\ea
$$
Therefore
$$
\ba
{}&\left\|\iint\sqrt{G_{\eps 1}}\frac{\sqrt{G'_\eps G'_{\eps 1}}-\sqrt{G_\eps G_{\eps 1}}}{\eps^2}
b(v-v_1,\om)M_1dv_1\right\|_{L^2((1+|v|)^{-\b}Mdv)}
\\
&\le\! C\!\left(1\!+\!\eps
\left\|\frac{\sqrt{G_{\eps 1}}-1}{\eps}\right\|_{L^2(M_1(1+|v_1|^\b)dv_1)}\right)
\LA\left(\frac{\sqrt{G'_\eps G'_{\eps 1}}-\sqrt{G_\eps G_{\eps 1}}}{\eps^2}\right)^2\RA^{1/2}
\ea
$$
because of the upper bound in Grad's cut-off assumption (\ref{Grad-ctff}).

Hence, on account of Proposition \ref{UINTV-PR} and the entropy production 
estimate (d) in Proposition \ref{fluct-control} 
\begin{equation}
\label{Estm-Q2}
Q^2_\eps=O(1)_{L^2((1+|v|)^{-\b}Mdvdxdt)}+O(\eps)_{L^1_{loc}(dtdx;L^2((1+|v|)^{-\b}Mdv)))}\,.
\end{equation}
Both estimates (\ref{Estm-Q1}) and (\ref{Estm-Q2}) together with (\ref{Nrml-Sqrt-Bltz}) 
entail the control in Lemma \ref{ADVC-LM}.
\end{proof}

With Lemma \ref{ADVC-LM} at our disposal, we next proceed to the

\begin{proof}[Proof of Proposition \ref{UINT-PR}]$ $

\bigskip
\noindent
\underbar{\it Step 1.} We claim that, for $\alpha>1$,
\begin{equation}
\label{sqr2-sqrea2}
\begin{aligned}
\left(\frac{\sqrt{\eps^\a+G_\eps}-1}{\eps}\right)^2
-
\left(\frac{\sqrt{G_\eps}-1}{\eps}\right)^2
&=O(\eps^{\a-1})_{L^\infty_t(L^2_{loc}(dx;L^2(Mdv)))}
\\
&+O(\eps^{\a/2})_{L^\infty_t(L^1(Mdvdx))}\,.
\end{aligned}
\end{equation}
Indeed,
\begin{equation}
\label{sqr-sqrea}
\begin{aligned}
\left|\frac{\sqrt{\eps^\a+G_\eps}-1}{\eps}-\frac{\sqrt{G_\eps}-1}{\eps}\right|
&\le
\frac{\eps^\a \indc_{G_\eps>1/2}}{\eps(\sqrt{\eps^\a+G_\eps}+\sqrt{G_\eps})}
+
\eps^{\a/2-1}\indc_{G_\eps\le 1/2}
\\
&\le
O(\eps^{\a-1})_{L^\infty_{t,x,v}}
+\eps^{\a/2}\tfrac{\sqrt{2}}{\sqrt{2}-1}\left|\frac{\sqrt{G_\eps}-1}{\eps}\right|
\\
&=O(\eps^{\a-1})_{L^\infty_{t,x,v}}+O(\eps^{\a/2})_{L^\infty_t(L^2(Mdvdx))}\,,
\end{aligned}
\end{equation}
and we conclude with the decomposition
$$
\begin{aligned}
&\left|\left(\frac{\sqrt{\eps^\a+G_\eps}-1}{\eps}\right)^2
-
\left(\frac{\sqrt{G_\eps}-1}{\eps}\right)^2\right|
\\
&\qquad=
\left(O(\eps^{\a-1})_{L^\infty_{t,x,v}}+O(\eps^{\a/2})_{L^\infty_t(L^2(Mdvdx))}\right)
\\
&\qquad\times
\left(O(\eps^{\a-1})_{L^\infty_{t,x,v}}+O(\eps^{\a/2})_{L^\infty_t(L^2(Mdvdx))}
+
2\left|\frac{\sqrt{G_\eps}-1}{\eps}\right|\right)
\end{aligned}
$$
together with the fluctuation control (b) in Proposition \ref{fluct-control}.

\bigskip
\noindent
\underbar{\it Step 2.} Let $\g$ be a smooth truncation as in (\ref{Dfnt-gmm}), and set
$$
\phi^\de_\eps=\left(\frac{\sqrt{\eps^\a+G_\eps}-1}{\eps}\right)^2
    \g\left(\eps\de\left(\frac{\sqrt{\eps^\a+G_\eps}-1}{\eps}\right)\right)\,.
$$
We claim that, for each fixed $\de>0$,
\begin{equation}
\label{advc-phi}
(\eps\d_t+v\cdot\grad_x)\phi^\de_\eps=O\left(\frac1\de\right)_{L^1_{loc}(Mdvdxdt)}\,.
\end{equation}
Indeed,
$$
(\eps\d_t+v\cdot\grad_x)\phi^\de_\eps=
\tilde\g\left(\eps\de\left(\frac{\sqrt{\eps^\a+G_\eps}-1}{\eps}\right)\right)
\left(\frac{\sqrt{\eps^\a+G_\eps}-1}{\eps}\right)(Q^1_\eps+Q^2_\eps)
$$
where $\tilde\g(Z)=2\g(Z)+Z\g'(Z)$, while $Q^1_\eps$ and $Q^2_\eps$ are defined in 
(\ref{Dfnt-Q1Q2}).

Clearly, $\tilde\g$ has support in $[0,2]$, so that
$$
\tilde\g\left(\eps\de\left(\frac{\sqrt{\eps^\a+G_\eps}-1}{\eps}\right)\right)
\left(\frac{\sqrt{\eps^\a+G_\eps}-1}{\eps}\right)=O\left(\frac1{\eps\de}\right)_{L^\infty_{t,x,v}}\,.
$$
On the other hand, the the fluctuation control (b) in Proposition \ref{fluct-control}
and the estimate (\ref{sqr-sqrea}) imply that
$$
\tilde\g\left(\eps\de\left(\frac{\sqrt{\eps^\a+G_\eps}-1}{\eps}\right)\right)
\left(\frac{\sqrt{\eps^\a+G_\eps}-1}{\eps}\right)=O(1)_{L^\infty_t(L^2_{loc}(dx;L^2(Mdv)))}\,.
$$
Together with Lemma \ref{ADVC-LM}, these last two estimates lead to the following 
bound 
$$
\begin{aligned}
(\eps\d_t+v\cdot\grad_x)\phi^\de_\eps&=
O\left(\frac{\eps^{1-\a/2}}{\de}\right)_{L^1(Mdvdxdt)}
\\
&+O(1)_{L^2_t(L^1_{loc}(dx;L^1((1+|v|)^{-\b/2}Mdv)))}
\\
&+O\left(\frac1\de\right)_{L^1_{loc}(dtdx;L^2((1+|v|)^{-\b}Mdv))}\,.
\end{aligned}
$$
Pick then $\a\in(1,2)$; the last estimate implies that (\ref{advc-phi}) holds for each 
$\de>0$, as announced.

\bigskip
\noindent
\underbar{\it Step 3.} On the other hand, we already know from the fluctuation control 
(b) in Proposition \ref{fluct-control} and (\ref{sqr2-sqrea2}) that
\begin{equation}
\label{H1}
\phi^\de_\eps=O(1)_{L^\infty_t(L^1_{loc}(Mdvdx))}\,.
\end{equation}
Moreover
\begin{equation}
\label{H2}
\phi^\de_\eps \hbox{  is locally uniformly integrable in the $v$-variable }
\end{equation}
Indeed, for each $\varphi\in L^\infty_{x,v}\cap L^\infty_x(L^1_v)$, one has
$$
\ba
\left|\int_K\int\phi^\de_\eps\varphi Mdvdx\right|
&\le\iint\left(\frac{\sqrt{G_\eps}-1}{\eps}\right)^2|\varphi|Mdxdv
\\
&+
\int_K\int\left|\left(\frac{\sqrt{\eps^\a+G_\eps}-1}{\eps}\right)^2
-\left(\frac{\sqrt{G_\eps}-1}{\eps}\right)^2\right||\varphi|Mdvdx\,.
\ea
$$
The second term is $O(\eps^{\a-1})\|\phi\|_{L^\infty}$. Hence this term can be made
smaller than any given $\eta$ whenever $\eps<\eps_0(\eta)$. Since $\eps$ denotes 
an extracted subsequence converging to $0$, there remain only finitely many terms, 
say $N\equiv N(\eta)$ that can also be made smaller that $\eta$, this time by choosing 
$\|\phi\|_{L^\infty_x(L^1_v)}$ smaller than $c\equiv c(N(\eta),\eta)$. As for the first term,
it can be made less than $\eta$ whenever $\|\phi\|_{L^\infty_x(L^1_v)}\le c'(\eta)$, by
Proposition  \ref{UINTV-PR}. Therefore
$$
\left|\int_K\int\phi^\de_\eps\varphi Mdvdx\right|\le2\eta
	\hbox{ for each $\eps$ and $\de>0$}
$$
whenever $\|\phi\|_{L^\infty_x(L^1_v)}\le\min(c(N(\eta),\eta),c'(\eta))$, which establishes
(\ref{H2}).

Applying Theorem \ref{B1-thm} (taken from \cite{GSRInvMath}) in the Appendix below, 
we conclude from (\ref{H1}), (\ref{H2}) and (\ref{advc-phi}) that 
\begin{equation}
\label{uint-phi}
\hbox{ for each }\de>0,\quad \phi^\de_\eps
	\hbox{ is locally uniformly integrable on }\bR_+\times\bR^3\times\bR^3
\end{equation}
for the measure $Mdvdxdt$. 

\bigskip
\noindent
\underbar{\it Step 4.} But, for each $\eps,\de\in(0,1)$, one has
$$
\begin{aligned}
\left(\frac{\sqrt{\eps^\a+G_\eps}-1}{\eps}\right)^2&-\phi^\de_\eps
\\
&=
\left(\frac{\sqrt{\eps^\a+G_\eps}-1}{\eps}\right)^2\left(1-\g\!\left(\!\eps\de\!
\left(\frac{\sqrt{\eps^\a+G_\eps}-1}{\eps}\right)\!\!\right)\!\!\right)
\\
&\le
\left(\frac{\sqrt{\eps^\a+G_\eps}-1}{\eps}\right)^2\indc_{G_\eps>1/\de^2}
\\
&\le
\frac1{\eps^2}G_\eps \indc_{G_\eps>1/\de^2}\le
\frac{C}{|\ln \delta|} \frac1{\eps^2}h(G_\eps -1)\indc_{G_\eps>1/\de^2}
\end{aligned}
$$
so that
$$
\left(\frac{\sqrt{\eps^\a+G_\eps}-1}{\eps}\right)^2-\phi^\de_\eps
=O\left(\frac1{|\ln\de|}\right)_{L^\infty_t(L^1(Mdvdx))}
$$
by the fluctuation control (a) in Proposition \ref{fluct-control}. This and (\ref{uint-phi}) 
imply that
\begin{equation}
\label{uint-sqrea2}
\left(\frac{\sqrt{\eps^\a+G_\eps}-1}{\eps}\right)^2
    \hbox{ is also locally uniformly integrable on }\bR_+\times\bR^3\times\bR^3
\end{equation}
for the measure $Mdvdxdt$. 

Because of the estimate (\ref{sqr2-sqrea2}) in Step 1, we finally conclude that
\begin{equation}
\label{uint-sqr2}
\left(\frac{\sqrt{G_\eps}-1}{\eps}\right)^2
    \hbox{ is locally uniformly integrable on }\bR_+\times\bR^3\times\bR^3
\end{equation}
for the measure $Mdvdxdt$. 

Together with the control of large velocities in Proposition \ref{UINTV-PR}, the
statement (\ref{uint-sqr2}) entails Proposition \ref{UINT-PR}.
\end{proof}

\smallskip
Here is a first consequence of Proposition \ref{UINT-PR}, bearing on the relaxation 
to infinitesimal Maxwellians.

\begin{Prop}\label{RLXT-PR}
Under the assumptions of Theorem \ref{BNSW-TH}, one has
$$
\frac{\sqrt{G_\eps}-1}{\eps}-\Pi\frac{\sqrt{G_\eps}-1}{\eps} 
	\to 0\hbox{ in }{L^2_{loc}(dtdx;L^2((1+|v|)^p Mdv)}
$$
for each $p<2$ as $\eps\to 0$.
\end{Prop}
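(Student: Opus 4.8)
The plan is to combine the two ingredients just established, namely the coercivity-type estimate of Lemma \ref{I-PI-LM} (in its integrated form \eqref{Crcv-1'}) and the weighted uniform integrability of Proposition \ref{UINT-PR}. Write
$$
r_\eps:=\frac{\sqrt{G_\eps}-1}{\eps}-\Pi\frac{\sqrt{G_\eps}-1}{\eps}\,.
$$
From \eqref{Crcv-1'} we already know that $\|r_\eps\|_{L^2(Mdv)}=O(\eps)_{L^1_{loc}(dtdx)}$, so $r_\eps\to 0$ in $L^1_{loc}(dtdx;L^2(Mdv))$, i.e. with the \emph{unweighted} velocity measure. The whole point of the statement is to upgrade this from $L^2(Mdv)$ to $L^2((1+|v|)^pMdv)$ for every $p<2$. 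The strategy is a standard interpolation/uniform-integrability argument: strong convergence to $0$ in the unweighted space, plus a uniform (weighted) bound that prevents mass from escaping to large $|v|$, forces convergence in the weighted space as well.

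First I would record that the unweighted square $\big(\tfrac{\sqrt{G_\eps}-1}{\eps}\big)^2$, and hence by the projection continuity \eqref{ContPiL2-L2vp} also $r_\eps^2$, is controlled with weight $(1+|v|)^p$ uniformly integrably: Proposition \ref{UINT-PR} gives this for the square-root fluctuation itself, and $\Pi\frac{\sqrt{G_\eps}-1}{\eps}=O(1)_{L^\infty_t(L^2_x(L^q(Mdv)))}$ for every $q<\infty$ handles the projected part, so $(1+|v|)^p r_\eps^2$ is uniformly integrable on $[0,T]\times K\times\bR^3$ for $dtdxMdv$, for each $p<2$. Next, fix $p<2$ and $p<p'<2$. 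Split the velocity integral at a threshold $|v|^2\le R$. On the bounded region the weight $(1+|v|)^p$ is controlled by a constant $C_R$, so
$$
\int_{|v|^2\le R}|r_\eps|^2(1+|v|)^pMdv\le C_R\|r_\eps\|_{L^2(Mdv)}^2\to 0
$$
in $L^1_{loc}(dtdx)$ by \eqref{Crcv-1'}. On the complementary region I would use the weighted uniform integrability at the higher exponent $p'$: since $(1+|v|)^{p'}r_\eps^2$ is uniformly integrable and the set $\{|v|^2>R\}$ has arbitrarily small $Mdv$-mass as $R\to\infty$, the tail $\int_{|v|^2>R}(1+|v|)^p r_\eps^2 Mdv$ can be made uniformly small in $\eps$ by taking $R$ large (absorbing the extra factor $(1+|v|)^{p-p'}\le 1$). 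Choosing $R$ first to kill the tail and then letting $\eps\to 0$ on the bounded part yields $r_\eps\to 0$ in $L^2_{loc}(dtdx;L^2((1+|v|)^pMdv))$, which is exactly the claim.

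The main obstacle, such as it is, lies in making the tail control genuinely uniform in $\eps$ while keeping the convergence $\|r_\eps\|_{L^2(Mdv)}\to 0$ only in the $L^1_{loc}(dtdx)$ sense rather than pointwise in $(t,x)$. The cleanest way around this is to work with the $L^1_{loc}(dtdx)$ norm throughout: for the bounded-velocity part one has genuine convergence to $0$ of the spatial integral, and for the tail one invokes the uniform-integrability definition from Proposition \ref{UINT-PR} directly as a statement about integrals against $dtdxMdv$ over $[0,T]\times K$, so no extraction of pointwise limits is needed. One must simply verify that $r_\eps^2$ (and not merely $\big(\tfrac{\sqrt{G_\eps}-1}{\eps}\big)^2$) inherits the weighted uniform integrability, which is immediate from the triangle inequality $r_\eps^2\le 2\big(\tfrac{\sqrt{G_\eps}-1}{\eps}\big)^2+2\big|\Pi\tfrac{\sqrt{G_\eps}-1}{\eps}\big|^2$ together with \eqref{ContPiL2-L2vp} and Proposition \ref{UINT-PR}.
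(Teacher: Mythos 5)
You have assembled exactly the two ingredients the paper itself uses --- the integrated coercivity estimate (\ref{Crcv-1'}) from Lemma \ref{I-PI-LM} and the weighted uniform integrability from Proposition \ref{UINT-PR} --- but your bounded-velocity step contains a genuine error. Writing $r_\eps:=\frac{\sqrt{G_\eps}-1}{\eps}-\Pi\frac{\sqrt{G_\eps}-1}{\eps}$, you assert that $\int_{|v|^2\le R}(1+|v|)^p r_\eps^2\,Mdv\le C_R\|r_\eps\|_{L^2(Mdv)}^2$ tends to $0$ in $L^1_{loc}(dtdx)$ ``by (\ref{Crcv-1'})''. But (\ref{Crcv-1'}) controls only the \emph{first} power: it says $\|r_\eps\|_{L^2(Mdv)}=O(\eps)_{L^1_{loc}(dtdx)}$, and $L^1$-smallness of a nonnegative function does not imply $L^1$-smallness of its square --- take $f_\eps=\eps^{-1}\indc_{E_\eps}$ with $|E_\eps|=\eps^2$, so that $\|f_\eps\|_{L^1}=\eps$ while $\|f_\eps^2\|_{L^1}=1$. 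Your closing remark about ``working with the $L^1_{loc}(dtdx)$ norm throughout'' does not cure this, because the quantity you must send to zero is quadratic in $\|r_\eps\|_{L^2(Mdv)}$ while the available bound is linear.

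The gap is repairable with ingredients already on your page, and the repair collapses your argument to the paper's. From (\ref{Crcv-1'}) and the entropy bound one gets $r_\eps\to 0$ in $L^1_{loc}(Mdvdxdt)$, hence in $dtdxMdv$-measure locally; combined with the joint uniform integrability of $(1+|v|)^p r_\eps^2$ on $[0,T]\times K\times\bR^3$ (a finite measure space, since $Mdv$ is a probability measure), Vitali's convergence theorem yields $(1+|v|)^p r_\eps^2\to 0$ in $L^1_{loc}(dtdx;L^1(Mdv))$ directly --- this is verbatim the paper's proof, and it renders your cutoff at $|v|^2\le R$ and the auxiliary exponent $p'$ superfluous. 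One further caveat: the joint uniform integrability of the projected part does not follow from (\ref{ContPiL2-L2vp}) alone, which controls only the $v$-direction; the squared coefficients $\la\xi\frac{\sqrt{G_\eps}-1}{\eps}\ra^2$ are merely bounded in $L^\infty_t(L^1_x)$ and could a priori concentrate in $x$. As the paper does at (\ref{Uintv-Pi}), you should dominate them via Cauchy--Schwarz by $\la(\frac{\sqrt{G_\eps}-1}{\eps})^2\ra$ and invoke Proposition \ref{UINT-PR} once more, integrated in $v$, to obtain their uniform integrability in $(t,x)$.
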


\begin{proof}
By Proposition \ref{UINT-PR}, the family
$$
(1+|v|)^p\left(\frac{\sqrt{G_\eps}-1}{\eps}-\Pi\frac{\sqrt{G_\eps}-1}{\eps}\right)^2
$$
is uniformly integrable on $[0,T]\times K\times\bR^3$ for the measure $Mdvdxdt$, 
for each $T>0$ and each compact $K\subset\bR^3$.

On the other hand, (\ref{Crcv-1}) and the the fluctuation control (b) in Proposition 
\ref{fluct-control} imply that
$$
\left(\frac{\sqrt{G_\eps}-1}{\eps}-\Pi\frac{\sqrt{G_\eps}-1}{\eps}\right)\to 0
	\hbox{ in }L^1_{loc}(Mdvdxdt)
$$
and therefore in $Mdvdxdt$-measure locally on $\bR_+\times\bR^3\times\bR^3$.

Therefore 
$$
(1+|v|)^p\left(\frac{\sqrt{G_\eps}-1}{\eps}-\Pi\frac{\sqrt{G_\eps}-1}{\eps}\right)^2\to 0
\hbox{ in }L^1_{loc}(dtdx;L^1(Mdv))\,,
$$
which implies the convergence stated above.
\end{proof}

\bigskip
We conclude this section with the following variant of the classical velocity averaging 
theorem \cite{GLPS,GPS}, stated as Theorem \ref{B2-thm} in \cite{GSRInvMath}. This result is
needed in order to handle the nonlinear terms appearing in the hydrodynamic limit. 

\begin{Prop}\label{CPTX-PR}
Under the assumptions of Theorem \ref{BNSW-TH}, for each $\xi\in L^2(Mdv)$, each
$T>0$ and each compact $K\subset\bR^3$
$$
\int_0^T\int_K|\la\xi g_\eps\g_\eps\ra(t,x+y)-\la\xi g_\eps\g_\eps\ra(t,x)|^2dxdt\to 0
$$
as $|y|\to 0^+$, uniformly in $\eps>0$.
\end{Prop}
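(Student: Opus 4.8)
The plan is to regard $g_\eps\g_\eps$ as a family of kinetic densities solving a renormalized transport equation and to feed it into the $L^2$ velocity averaging theorem (Theorem~\ref{B2-thm}, the variant of \cite{GLPS,GPS} recalled from \cite{GSRInvMath}), whose conclusion is exactly the spatial equicontinuity of velocity averages, uniform in $\eps$. The first step is a reduction to profiles with compact support in $v$: writing $\xi_R=\xi\indc_{|v|\le R}$, the Cauchy--Schwarz inequality with the weight $(1+|v|)^p$, $p<2$, gives
\[
|\la(\xi-\xi_R)g_\eps\g_\eps\ra|\le\left(\int_{|v|>R}\xi^2\,Mdv\right)^{1/2}
\la(1+|v|)^p(g_\eps\g_\eps)^2\ra^{1/2}\,,
\]
where the second factor is bounded in $L^2_{loc}(dtdx)$ by Proposition~\ref{UINT-PR} and the first tends to $0$ as $R\to\infty$ since $\xi\in L^2(Mdv)$; hence $\la(\xi-\xi_R)g_\eps\g_\eps\ra\to 0$ in $L^2_{loc}(dtdx)$, uniformly in $\eps$. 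It therefore suffices to treat a fixed $\xi$ with compact support in $v$.

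Next I would verify the hypotheses of the averaging theorem for $g_\eps\g_\eps$. Since $\g_\eps$ vanishes off $\{G_\eps\le 2\}$, there $|g_\eps\g_\eps|\le(1+\sqrt2)\,|(\sqrt{G_\eps}-1)/\eps|$, so the fluctuation control~(b) of Proposition~\ref{fluct-control} yields $g_\eps\g_\eps=O(1)_{L^\infty_t(L^2(Mdvdx))}$, which is the required $L^2$ bound against a compactly supported $\xi$. For the transport side, renormalizing the scaled Boltzmann equation with the admissible nonlinearity $\Gamma(Z)=(Z-1)\g(Z)$ gives
\[
(\eps\d_t+v\cdot\grad_x)(g_\eps\g_\eps)=\tfrac1{\eps^2}\hat\g_\eps\cQ(G_\eps,G_\eps)\,.
\]
Factoring $G'_\eps G'_{\eps 1}-G_\eps G_{\eps 1}=(\sqrt{G'_\eps G'_{\eps 1}}-\sqrt{G_\eps G_{\eps 1}})(\sqrt{G'_\eps G'_{\eps 1}}+\sqrt{G_\eps G_{\eps 1}})$ and applying Cauchy--Schwarz against $d\mu$ exactly as in the proof of Lemma~\ref{ADVC-LM}, the entropy production bound~(d) of Proposition~\ref{fluct-control} and Proposition~\ref{UINTV-PR} control the right-hand side as $O(1)_{L^2((1+|v|)^{-\b}Mdvdxdt)}+O(\eps)_{L^1_{loc}(dtdx;L^2((1+|v|)^{-\b}Mdv))}$.

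With these two bounds I would invoke Theorem~\ref{B2-thm}, concluding that the averages $\la\xi g_\eps\g_\eps\ra$ are equicontinuous for translations in the $x$-variable, uniformly in $\eps$ and locally uniformly in $t$; together with the tail reduction of the first step this yields the statement for every $\xi\in L^2(Mdv)$. It is worth stressing that only spatial equicontinuity is asserted: the factor $\eps$ in front of $\d_t$ permits fast acoustic oscillations in time, which preclude any $t$-equicontinuity uniform in $\eps$, whereas the regularizing effect of averaging in $v$ on the $x$-dependence is insensitive to this scaling.

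The step I expect to be the main obstacle is the control of the source $\tfrac1{\eps^2}\hat\g_\eps\cQ(G_\eps,G_\eps)$. Because the collision kernel only obeys the hard cutoff bound~(\ref{Grad-ctff}), the Cauchy--Schwarz splitting of the collision integral unavoidably produces the velocity weight $(1+|v|)^{-\b}$; the new ingredient is to absorb it through the weighted estimates of Propositions~\ref{UINTV-PR} and~\ref{UINT-PR}, rather than through the unweighted control that sufficed for Maxwell molecules in \cite{GSRInvMath}. Once this weighted streaming bound is secured, the averaging argument is identical to the one in that reference.
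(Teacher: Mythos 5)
Your proof is correct, but it reaches the velocity averaging theorem by a different road than the paper. You feed $g_\eps\g_\eps$ itself into Theorem \ref{B2-thm}, using the admissible renormalization $\Gamma(Z)=(Z-1)\g(Z)$ and re-deriving the streaming bound by the Cauchy--Schwarz computation of Lemma \ref{ADVC-LM}; this works because the truncation $\hat\g_\eps$ (which forces $G_\eps\le 2$) plays the role of the prefactor $(\eps^\a+G_\eps)^{-1/2}$ there --- indeed better, since the quadratic piece $\hat\g_\eps\iint \eps^{-2}\bigl(\sqrt{G'_\eps G'_{\eps 1}}-\sqrt{G_\eps G_{\eps 1}}\bigr)^2\,b\,M_1dv_1d\om$ is $O(\eps^2)_{L^1(Mdvdxdt)}$ with no $\eps^{-\a/2}$ loss; this term is missing from your final bookkeeping but is harmless. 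The paper never writes a transport equation for $g_\eps\g_\eps$: it first shows, via the Product Limit Theorem and Proposition \ref{UINT-PR}, that $g_\eps\g_\eps-2(\sqrt{G_\eps}-1)/\eps\to 0$ in $L^2_{loc}(dtdx;L^2(Mdv))$ (estimate (\ref{gga-sqrt})), then replaces $(\sqrt{G_\eps}-1)/\eps$ by $(\sqrt{\eps^\a+G_\eps}-1)/\eps$ through (\ref{sqr2-sqrea2}), and only then applies Theorem \ref{B2-thm} to the latter, for which Lemma \ref{ADVC-LM} and the uniform integrability (\ref{uint-sqrea2}) are already available off the shelf. Your route is more direct (no replacement steps) at the price of redoing the collision estimate; the paper's recycles Lemma \ref{ADVC-LM} verbatim. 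Your preliminary truncation of $\xi$ to compact support is superfluous, since Theorem \ref{B2-thm} is stated for arbitrary $\xi\in L^2(Mdv)$.

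One hypothesis must still be checked explicitly: Theorem \ref{B2-thm} requires, besides the $L^2$ bound and the control of the streaming term, the \emph{local uniform integrability of $|\varphi_\eps|^2$ with respect to $dtdxMdv$} --- this is precisely where the full strength of Proposition \ref{UINT-PR} enters, and without it the $L^1$ averaging lemma fails. For your choice $\varphi_\eps=g_\eps\g_\eps$ it follows at once from the pointwise bound $|g_\eps\g_\eps|\le(1+\sqrt2)\,|\sqrt{G_\eps}-1|/\eps$ you already use together with Proposition \ref{UINT-PR}, so this is an omission rather than a gap, but as written you verify only two of the three hypotheses. Note also that the weak relative compactness in $L^1_{loc}(dtdxMdv)$ of the source demanded by Theorem \ref{B2-thm} does follow from your bounds: the $O(1)$ piece is uniformly square integrable on compact sets and $L^2((1+|v|)^{-\b}Mdv)$ embeds in $L^1(Mdv)$ because the Gaussian weight integrates $(1+|v|)^{\b}$, while the remaining pieces vanish strongly.
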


\begin{proof}
Observe that
$$
g_\eps\g_\eps-2\frac{\sqrt{G_\eps}-1}{\eps}
=\frac{\sqrt{G_\eps}-1}{\eps}((\sqrt{G_\eps}+1)\g_\eps-2)\,;
$$
since, up to extraction,
$$
(\sqrt{G_\eps}+1)\g_\eps-2\to 0\hbox{ a.e. and }|(\sqrt{G_\eps}+1)\g_\eps-2|\le 3+\sqrt{2}\,,
$$
it follows from Proposition \ref{UINT-PR} and Theorem \ref{A1-thm} in the Appendix 
below, referred to as the Product Limit Theorem, that
\begin{equation}
\label{gga-sqrt}
g_\eps\g_\eps-2\frac{\sqrt{G_\eps}-1}{\eps}\to 0\hbox{ in }L^2_{loc}(dtdx;L^2(Mdv))
\end{equation}
as $\eps\to 0$.

This estimate, and step 1 in the proof of Proposition \ref{UINT-PR} (and especially the
estimate (\ref{sqr2-sqrea2}) there) shows that one can replace $g_\eps\g_\eps$ with 
$\frac{\sqrt{\eps^\a+G_\eps}-1}{\eps}$ with $\a>1$ in the equicontinuity statement 
of Proposition \ref{CPTX-PR}.

Using (\ref{uint-sqrea2}) shows that, for each $\a\in(1,2)$, the family 
$$
\left(\frac{\sqrt{\eps^\a+G_\eps}-1}{\eps}\right)^2
	\hbox{ is locally uniformly integrable on }\bR_+\times\bR^3\times\bR^3
$$ 
for the measure $Mdvdxdt$. In view of the estimate (\ref{sqr2-sqrea2}) and Proposition 
\ref{UINTV-PR}, we also control the contribution of large velocities in the above term, 
so that, for each $T>0$ and each compact  $K\subset \bR^3$,
$$
\left(\frac{\sqrt{\eps^\a+G_\eps}-1}{\eps}\right)^2\hbox{ is  uniformly integrable on }
[0,T] \times K\times\bR^3
$$ 
for the measure $Mdvdxdt$. 

On the other hand, Lemma \ref{ADVC-LM} shows that the family 
$$
(\eps\d_t+v\cdot\grad_x)\frac{\sqrt{\eps^\a+G_\eps}-1}{\eps}
	\hbox{ is bounded in } L^1_{loc}(Mdvdxdt)\,.
$$

Applying then Theorem \ref{B2-thm} (taken from \cite{GSRInvMath}) in the Appendix
below shows that, for each $T>0$ and each compact  $K\subset\bR^3$, one has
$$
\int_0^T\int_K\left|\La\xi\frac{\sqrt{\eps^\a+G_\eps}-1}{\eps}\Ra(t,x+y)
    -\La\xi\frac{\sqrt{\eps^\a+G_\eps}-1}{\eps}\Ra(t,x)\right|^2dxdt\to 0
$$
as $|y|\to 0$ uniformly in $\eps$, which concludes the proof of Proposition \ref{CPTX-PR}.
\end{proof}


\section{Vanishing of conservation defects}


Conservation defects appear in the renormalized form of the Boltzmann equation precisely 
because the natural symmetries of the collision integral are broken by the renormalization 
procedure. However, these conservation defects vanish in the hydrodynamic limit, as shown
by the following 

\begin{Prop}\label{DF-PR}
Under the same assumptions as in Theorem \ref{BNSW-TH}, for each 
$\xi\in\Span\{1,v_1,v_2,v_3,|v|^2\}$, one has the following convergence for the conservation 
defects $\bD_\eps(\xi)$ defined by (\ref{Dfnt-D})~:
$$
\bD_\eps(\xi)\to 0\hbox{ in }L^1_{loc}(dtdx)\hbox{ as }\eps\to 0\,.
$$
\end{Prop}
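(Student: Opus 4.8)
The plan is to show that the conservation defect

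$$\bD_\eps(\xi)=\frac1{\eps^3}\lA\xi_{K_\eps}\hat\g_\eps(G'_\eps G'_{\eps 1}-G_\eps G_{\eps 1})\rA\to 0\hbox{ in }L^1_{loc}(dtdx)$$

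for $\xi\in\Span\{1,v_1,v_2,v_3,|v|^2\}$. The fundamental point is that on an \emph{exact} collisional symmetry the quantity $G'_\eps G'_{\eps 1}-G_\eps G_{\eps 1}$ integrated against a collision invariant $\xi$ would vanish identically; the defect is produced purely by the two truncations in the renormalization — the cutoff $\hat\g_\eps$ on the relative density and the velocity truncation $\xi_{K_\eps}$. So the whole proof is a matter of quantifying how far these truncations push us off the collision symmetry, and showing that what remains is $o(1)$.

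\textbf{Step 1: factor the defect through the entropy dissipation.} The natural first move is to write the bracketed term using the algebraic identity $X-Y=(\sqrt X-\sqrt Y)(\sqrt X+\sqrt Y)$, so that
$$G'_\eps G'_{\eps 1}-G_\eps G_{\eps 1}=\left(\sqrt{G'_\eps G'_{\eps 1}}-\sqrt{G_\eps G_{\eps 1}}\right)\left(\sqrt{G'_\eps G'_{\eps 1}}+\sqrt{G_\eps G_{\eps 1}}\right).$$
The first factor, divided by $\eps^2$, is controlled in $L^2(d\mu)$ by the entropy production estimate (d) in Proposition \ref{fluct-control} — it is $O(1)_{L^2(d\mu\,dtdx)}$. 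That leaves one factor of $1/\eps$ to absorb and the task of showing the \emph{complementary} factor $\xi_{K_\eps}\hat\g_\eps(\sqrt{G'_\eps G'_{\eps 1}}+\sqrt{G_\eps G_{\eps 1}})/\eps$ is small in $L^2(d\mu)$ \emph{after} using the collision symmetries (\ref{CollSym}) to symmetrize $\xi_{K_\eps}$ over the pre- and post-collisional states. Concretely, I would apply Cauchy--Schwarz so that $\|\bD_\eps(\xi)\|_{L^1_{loc}}$ is bounded by the (bounded) dissipation norm times the $L^2(d\mu)$ norm of the remaining factor, and then show that remaining norm is $o(1)$.

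\textbf{Step 2: split the $\sqrt{G}+1$ factor and exploit symmetrization.} Writing $\sqrt{G_\eps}=1+\eps(\sqrt{G_\eps}-1)/\eps$, the summand $\sqrt{G'_\eps G'_{\eps 1}}+\sqrt{G_\eps G_{\eps 1}}$ decomposes into a constant piece ($=2$) plus pieces that are each $O(\eps)$ times products of the bounded fluctuation $(\sqrt{G_\eps}-1)/\eps$ controlled in $L^2$ by (b) and, after Proposition \ref{UINT-PR}, with the weight $(1+|v|)^p$. The constant piece is where the collision symmetry does the real work: against the \emph{untruncated} invariant $\xi$ the symmetrized sum $\xi'+\xi'_1-\xi-\xi_1$ vanishes, so the only surviving contribution comes from the defect of truncation $\xi_{K_\eps}-\xi$ and from $\hat\g_\eps-1$. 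These two error sources are the heart of the matter and are handled by Step 3.

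\textbf{Step 3 (main obstacle): control the truncation errors.} The hardest part will be showing that the errors introduced by $\hat\g_\eps\ne1$ and by $\xi_{K_\eps}\ne\xi$ are genuinely negligible, because both become active exactly where $G_\eps$ is large or $|v|$ is large — the two regimes the DiPerna--Lions theory controls only weakly. For the density cutoff, I expect to use $1-\hat\g_\eps$ being supported on $\{G_\eps\ge 3/2\}$ together with the entropy bound (a), so that the measure of this set is small in a quantitative way and the contribution is $O(1/|\ln\eps|)$ or similar, as in the estimate at the end of Step 4 of Proposition \ref{UINT-PR}. For the velocity cutoff, with $K_\eps=\mathbf{k}|\ln\eps|$ and $\mathbf{k}>6$, the tail $\{|v|^2>K_\eps\}$ carries a Gaussian weight $M$ that beats any polynomial $\xi$, so pairing $\xi(1-\indc_{|v|^2\le K_\eps})$ against the fluctuations — now integrable against weights $(1+|v|)^p$ for $p<2$ thanks to Proposition \ref{UINTV-PR} and Proposition \ref{UINT-PR} — produces a factor like $\eps^{c\mathbf{k}}$ that vanishes as $\eps\to0$. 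The delicate bookkeeping is matching the polynomial growth of $\xi=|v|^2$ (the worst case) against the available weight $p<2$, which is precisely why the weighted uniform integrability of the preceding sections was established; I would track the powers of $\eps$ carefully to confirm the surplus factor of $1/\eps$ from Step 1 is absorbed, leaving an overall $o(1)_{L^1_{loc}}$.
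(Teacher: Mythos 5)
Your outline reproduces the skeleton of the paper's proof: the paper also factors the defect through the entropy dissipation (its split
$G'_\eps G'_{\eps 1}-G_\eps G_{\eps 1}=\bigl(\sqrt{G'_\eps G'_{\eps 1}}-\sqrt{G_\eps G_{\eps 1}}\bigr)^2+2\sqrt{G_\eps G_{\eps 1}}\bigl(\sqrt{G'_\eps G'_{\eps 1}}-\sqrt{G_\eps G_{\eps 1}}\bigr)$
is your difference-of-squares factorization after expanding), uses the collision invariance $\xi+\xi_1=\xi'+\xi'_1$ through the symmetries (\ref{CollSym}) on the fully truncated piece, and disposes of the velocity tail $|v|^2>K_\eps$ by Cauchy--Schwarz, Young's inequality and the Gaussian tail lemma (this is where ${\bf k}>6$ enters --- a condition you never verify, but that is minor). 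The genuine gap is your Step 3 treatment of the density cutoff $1-\hat\g_\eps$. An entropy-only argument cannot close it: at the \emph{fixed} threshold $\{G_\eps\ge 3/2\}$, Chebyshev with $h$ gives only that this set has $dxMdv$-measure $O(\eps^2)$, which does not make the weighted integral small --- you still must integrate the $O(1)_{L^2}$ quantity $\bigl(\tfrac{\sqrt{G_\eps}-1}{\eps}\bigr)^2$ over that small set, and smallness of such integrals is exactly uniform integrability, not a consequence of the $L^2$ bound. The logarithmic estimate you cite from Step 4 of Proposition \ref{UINT-PR}, namely $\tfrac1{\eps^2}G_\eps\indc_{G_\eps>1/\de^2}\le\tfrac{C}{|\ln\de|}\tfrac1{\eps^2}h(G_\eps-1)$, produces smallness only as the threshold $1/\de^2\to\infty$; at the fixed threshold $3/2$ it yields $O(1)$, not the $o(1)$ (let alone $O(1/|\ln\eps|)$) you forecast. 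Since your Steps 1--2 leave precisely one factor $1/\eps$ to absorb on this term, the proposed argument fails at its central point.

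What the paper actually does there is the real content of the proposition. On $\Supp(1-\hat\g)$ one has $G_\eps\ge\tfrac32$, whence the pointwise inequality $\tfrac1\eps|1-\hat\g_\eps|\sqrt{G_\eps}\le C\,|1-\hat\g_\eps|\,\tfrac{|\sqrt{G_\eps}-1|}{\eps}$; then the \emph{full strength} of Proposition \ref{UINT-PR} --- uniform integrability of $\bigl(\tfrac{\sqrt{G_\eps}-1}{\eps}\bigr)^2$ in all variables, which rests on velocity averaging and dispersion (Appendix B), not on the entropy inequality --- combined with the Product Limit Theorem gives the strong $L^2$ convergence (\ref{1-gamma}). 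Moreover, in the terms where $1-\hat\g$ sits at the \emph{primed} velocities ($\bD^{222}_\eps$ in the paper), no $\sqrt{G'_\eps}$ is available at that point, so one needs the naked estimate $\tfrac{1-\hat\g_\eps}{\eps}(1+|v|)^r\to 0$ in $L^2_{loc}$, i.e. (\ref{g-1/eps}); the paper proves this by splitting $\tfrac{\sqrt{G_\eps}-1}{\eps}$ into its $\Pi$ and $(I-\Pi)$ parts, invoking the relaxation estimate of Lemma \ref{I-PI-LM} (the $(I-\Pi)$ part is $O(\eps)$ in $L^1_{loc}(dtdx;L^2(Mdv))$), and interpolating --- a step your outline does not anticipate. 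I note that a \emph{fully} symmetrized version of your Step 1 (Cauchy--Schwarz against the weight $\tfrac14\bigl(\xi_{K_\eps}\hat\g_\eps+\xi_{K_\eps 1}\hat\g_{\eps 1}-\xi'_{K_\eps}\hat\g'_\eps-\xi'_{K_\eps 1}\hat\g'_{\eps 1}\bigr)\bigl(\sqrt{G'_\eps G'_{\eps 1}}+\sqrt{G_\eps G_{\eps 1}}\bigr)$) could plausibly sidestep (\ref{g-1/eps}), since each $1-\hat\g$ factor then finds a $\sqrt{G}$ at its own point in one summand, the other summand being exchangeable at cost $O(\eps K_\eps)$ via the dissipation; but even that route requires Proposition \ref{UINT-PR} plus the Product Limit Theorem, and in no case the measure-smallness argument you propose.
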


\begin{proof}
For $\xi\in\Span\{1,v_1,v_2,v_3,|v|^2\}$, the associated defect $\bD_\eps(\xi)$ is split as follows:
\begin{equation}
\label{Dcmp-D}
\bD_\eps(\xi)=\bD^1_\eps(\xi)+\bD^2_\eps(\xi)
\end{equation}
with
$$
\bD^1_\eps(\xi)=\frac1{\eps^3}
\LA\xi_{K_\eps}\hat\g_\eps\left(\sqrt{G'_\eps G'_{\eps 1}}-\sqrt{G_\eps G_{\eps 1}}\right)^2\RA\,,
$$
and
$$
\bD^2_\eps(\xi)=\frac2{\eps^3}
\LA\xi_{K_\eps}\hat\g_\eps\sqrt{G_\eps G_{\eps 1}}
\left(\sqrt{G'_\eps G'_{\eps 1}}-\sqrt{G_\eps G_{\eps 1}}\right)\RA\,,
$$
with the notation (\ref{Def-mu}) and (\ref{Not<<>>}).

\bigskip
That the term $\bD^1_\eps(\xi)$ vanishes for $\xi(v)=O(|v|^2)$ as $|v|\to+\infty$ is easily seen as
follows:
\begin{equation}
\label{D1->0}
\begin{aligned}
\|\bD^1_\eps(\xi)\|_{L^1_{t,x}}&\le\eps\|\xi_{K_\eps}\hat\g_\eps\|_{L^\infty_{t,x,v}}
\left\|\frac{\sqrt{G'_\eps G'_{\eps 1}}-\sqrt{G_\eps G_{\eps 1}}}{\eps^2}\right\|^2_{L^2_{t,x,\mu}}
\\
&\le\eps O(K_\eps)O(1)=O(\eps|\ln\eps|)\,,
\end{aligned}
\end{equation}
because of the entropy production estimate in Proposition \ref{fluct-control} (d) and the choice
of $K_\eps$ in (\ref{DefKeps}).

\bigskip
We further decompose $\bD^2_\eps(\xi)$ in the following manner:
\begin{equation}
\label{Dcmp-D2}
\bD^2_\eps(\xi)=\bD^{21}_\eps(\xi)+\bD^{22}_\eps(\xi)+\bD^{23}_\eps(\xi)
\end{equation}
with
$$
\bD^{21}_\eps(\xi)=
-\frac2{\eps}\LA\xi\indc_{|v|^2>K_\eps}\hat\g_\eps\frac{\sqrt{G'_\eps G'_{\eps 1}}
-\sqrt{G_\eps G_{\eps 1}}}{\eps^2}\sqrt{G_\eps G_{\eps 1}}\RA\,,
$$
$$
\bD^{22}_\eps(\xi)=
\frac2{\eps}\LA\xi\hat\g_\eps(1-\hat\g_{\eps 1}\hat\g'_\eps\hat\g'_{\eps 1})
\frac{\sqrt{G'_\eps G'_{\eps 1}}-\sqrt{G_\eps G_{\eps 1}}}{\eps^2}\sqrt{G_\eps G_{\eps 1}}\RA\,,
$$
and, by symmetry in the $v$ and $v_1$ variables,
$$
\bD^{23}_\eps(\xi)=
\frac1{\eps}\LA(\xi+\xi_1)\hat\g_\eps\hat\g_{\eps 1}\hat\g'_\eps\hat\g'_{\eps 1}
\frac{\sqrt{G'_\eps G'_{\eps 1}}-\sqrt{G_\eps G_{\eps 1}}}{\eps^2}\sqrt{G_\eps G_{\eps 1}}\RA\,.
$$

The terms $\bD^{21}_\eps(\xi)$ and $\bD^{23}_\eps(\xi)$ are easily mastered by the following 
classical estimate on the tail of Gaussian distributions (see for instance \cite{GSRInvMath} on 
p. 103 for a proof).

\begin{Lem}\label{GSTL-LM}
Let $\bG_N(z)$ be the centered, reduced Gaussian density in $\bR^N$, i.e.
$$
\bG_N(z)=\frac1{(2\pi)^{N/2}}e^{-\tfrac12|z|^2}\,.
$$
Then
$$
\int_{|z|^2>R}|z|^p\bG_N(z)dz\sim (2\pi)^{-N/2}|\bS^{N-1}|R^{\frac{p+N}2-1}e^{-\tfrac12R}
$$
as $R\to+\infty$.
\end{Lem}

Indeed, because of the upper bound on the collision cross-section in (\ref{Grad-ctff}), for each 
$T>0$ and each compact $K\subset \bR^3$,
$$
\begin{aligned}
\|\bD^{21}_\eps(\xi)\|&_{L^1([0,T]\times K)}
\\
\le&\frac2{\eps}
\|\xi\indc_{|v|^2>K_\eps}\hat\g_\eps\sqrt{G_\eps G_{\eps 1}}\|_{L^2([0,T]\times K,L^2_\mu)}
\left\|\frac{\sqrt{G'_\eps G'_{\eps 1}}-\sqrt{G_\eps G_{\eps 1}}}{\eps^2}\right\|_{L^2_{t,x,\mu}}
\\
\le&\frac{C_b^{1/2}}{\eps}\|\xi^2\indc_{|v|^2>K_\eps}(1+|v|)^{\b}\|_{L^1(Mdv)}^{1/2}
\|\hat\g_\eps\sqrt{G_\eps}\|_{L^\infty_{t,x,v}}\\
&\quad\|G_{\eps 1}(1+|v_1|)^\b\|^{1/2}_{L^1([0,T]\times K,L^1(M_1dv_1))}
 \left\|\frac{\sqrt{G'_\eps G'_{\eps 1}}-\sqrt{G_\eps G_{\eps 1}}}{\eps^2}\right\|_{L^2_{t,x,\mu}}
\end{aligned}
$$
In the last right-hand side of the above chain of inequalities, one has obviously
$$
\|\hat\g_\eps\sqrt{G_\eps}\|_{L^\infty_{t,x,v}}=O(1)\,.$$
From Young's inequality and the entropy bound (\ref{entropy-bound}), we deduce that
$$ 
\begin{aligned}
G_\eps(1+|v|)^2&\leq (1+|v|^2) +4\left( h(G_\eps-1) + h^* \left(\frac{1+|v|^2}{4}\right)\right)\\
&=O(1)_{L^1([0,T]\times K,L^1(Mdv))}\,.
\end{aligned}
$$
Lemma \ref{GSTL-LM} and the condition $\xi(v)=O(|v|^2)$ as $|v|\to+\infty$ imply that
$$
\|\xi^2\indc_{|v|^2>K_\eps}(1+|v|)^{\b}\|_{L^1(Mdv)}^{1/2}=O(K_\eps^{\frac{\b+5}2}e^{-\tfrac12K_\eps})
=O(\eps^{\mathbf{k}/2}|\ln\eps|^{\frac{\b+5}2})\,,
$$
on account of (\ref{DefKeps}). Thus
\begin{equation}
\label{D21->0}
\|\bD^{21}_\eps(\xi)\|_{L^1([0,T]\times K)}=O(\eps^{\mathbf{k}/2-1}|\ln\eps|^{\frac{\b+5}2})\to 0
\end{equation}
for all $\xi(v)=O(|v|^2)$ as $|v|\to+\infty$ as soon as $\mathbf{k}>2$.

\bigskip
Next we handle $\bD^{23}_\eps(\xi)$. Whenever $\xi$ is a collision invariant (i.e. whenever $\xi$ 
belongs to the linear span of $\{1,v_1,v_2,v_3,|v|^2\}$) then $\xi+\xi_1=\xi'+\xi'_1$, and using the 
$(v,v_1)-(v',v'_1)$ symmetry (\ref{CollSym}) in the integral defining $\bD^{23}_\eps(\xi)$ leads to
$$
\begin{aligned}
\bD^{23}_\eps(\xi)&=
-\frac1{\eps}\LA(\xi+\xi_1)\hat\g_\eps\hat\g_{\eps 1}\hat\g'_\eps\hat\g'_{\eps 1}
\frac{(\sqrt{G'_\eps G'_{\eps 1}}-\sqrt{G_\eps G_{\eps 1}})^2}{2\eps^2}\RA
\\
&=-\bD^{231}_\eps(\xi)-\bD^{232}_\eps(\xi)\,,
\end{aligned}
$$
where
$$
\bD^{231}_\eps(\xi)=
\tfrac12 \eps \LA(\xi+\xi_1)\indc_{|v|^2+|v_1^2|\le K_\eps}
\hat\g_\eps\hat\g_{\eps 1}\hat\g'_\eps\hat\g'_{\eps 1}
\frac{(\sqrt{G'_\eps G'_{\eps 1}}-\sqrt{G_\eps G_{\eps 1}})^2}{\eps^4}\RA\,,
$$
and
$$
\bD^{232}_\eps(\xi)=
\tfrac12 \eps\LA(\xi+\xi_1)\indc_{|v|^2+|v_1^2|>K_\eps}
\hat\g_\eps\hat\g_{\eps 1}\hat\g'_\eps\hat\g'_{\eps 1}
\frac{(\sqrt{G'_\eps G'_{\eps 1}}-\sqrt{G_\eps G_{\eps 1}})^2}{\eps^4}\RA\,.
$$
Then
$$\begin{aligned}
\|\bD^{231}_\eps(\xi)\|_{L^1_{t,x}}
&\le\eps\left\|
\frac{\sqrt{G'_\eps G'_{\eps 1}}-\sqrt{G_\eps G_{\eps 1}}}{\eps^2}\right\|^2_{L^2_{t,x,\mu}}
\\
&\times\tfrac12\|(\xi+\xi_1)\indc_{|v|^2+|v_1|^2\le K_\eps}
\hat\g_\eps\hat\g_{\eps 1}\hat\g'_\eps\hat\g'_{\eps 1}\|_{L^\infty_{t,x,v,v_1,\om}}
\\
&=\eps\cdot O(1)\cdot O(K_\eps)\|\hat\g\|^4_{L^\infty}
\end{aligned}
$$
so that
\begin{equation}
\label{D231->0}
\|\bD^{231}_\eps(\xi)\|_{L^1_{t,x}}=O(\eps K_\eps)\to 0 \hbox{ as }\eps \to 0\,.
\end{equation}

On the other hand, since $G_\eps\in[0,2]$ whenever $\hat\g(G_\eps)\not=0$,
$$\begin{aligned}
{}&\|\bD^{232}_\eps(\xi)\|_{L^\infty_{t,x}}\le 
16\|\hat\g\|^4_{L^\infty}\frac1{\eps^3}\|\tfrac12(\xi+\xi_1)\indc_{|v|^2+|v_1|^2>K_\eps}\|_{L^1_\mu}
\\
&\le O\left(\frac1{\eps^3}\right)
\|(1+|v|^2+|v_1|^2)(1+|v-v_1|)^\b\indc_{|v|^2+|v_1|^2>K_\eps}\|_{L^1(MM_1dvdv_1)}
\\
&=O\left(\frac1{\eps^3}\right)
\|(1+|v|^2+|v_1|^2)^{1+\b/2}\indc_{|v|^2+|v_1|^2>K_\eps}\|_{L^1(MM_1dvdv_1)}
\\
&=O\left(\frac1{\eps^3}\right)O\left(e^{-K_\eps/2}K_\eps^{\frac{\b+6}2}\right)
\end{aligned}
$$
so that
\begin{equation}
\label{D232->0}
\|\bD^{232}_\eps(\xi)\|_{L^\infty_{t,x}}=O\left(\eps^{{\bf k}/2-3}|\ln\eps|^{\frac{\b+6}2}\right)
\to 0\hbox{ as }\eps \to 0\,,
\end{equation}
for ${\bf k}>6$, by a direct application of Lemma \ref{GSTL-LM} in $\bR^3_{v}\times\bR^3_{v_1}$ 
--- i.e. with $N=6$.

\bigskip
Whereas the terms $\bD^1_\eps(\xi)$, $\bD^{21}_\eps(\xi)$, and $\bD^{23}_\eps(\xi)$ are shown
to vanish by means of only the entropy and entropy production bounds in Proposition \ref{fluct-control}
(a)-(d) and Lemma \ref{GSTL-LM}, the term $\bD^{22}_\eps(\xi)$ is much less elementary to handle.

First, we split $\bD^{22}_\eps(\xi)$ as
$$
\begin{aligned}
{}&\bD^{22}_\eps(\xi)=\frac2{\eps}\LA\xi\hat\g_\eps(1-\hat\g_{\eps 1})\sqrt{G_\eps G_{\eps 1}}
\frac{\sqrt{G'_\eps G'_{\eps 1}}-\sqrt{G_\eps G_{\eps 1}}}{\eps^2}\RA
\\
&+
\frac2{\eps}\LA\xi(\hat\g_\eps\hat\g_{\eps 1}(1-\hat\g'_\eps)
+\hat\g_\eps\hat\g_{\eps 1}\hat\g'_\eps(1-\hat\g'_{\eps 1}))
\frac{\sqrt{G'_\eps G'_{\eps 1}}-\sqrt{G_\eps G_{\eps 1}}}{\eps^2}\sqrt{G_\eps G_{\eps 1}}\RA
\\
&\quad\quad\quad=\bD^{221}_\eps(\xi)+\bD^{222}_\eps(\xi)\,.
\end{aligned}
$$

For each $T>0$ and each compact $K\subset\bR^3$, the first term satisfies
$$
\begin{aligned}
\,&\|\bD^{221}_\eps(\xi)\|_{L^1([0,T]\times K)}
\\
&\le 2C_b\left\|\frac1\eps(1-\hat\g_{\eps 1})\sqrt{G_{\eps 1}}
(1+|v_1|)^{\b/2}\right\|_{L^2([0,T]\times K;L^2(M_1dv_1)))}\|\hat\g_\eps\sqrt{G_\eps}\|_{L^\infty_{t,x,v}}
\\
&
\qquad \||\xi|(1+|v|)^{\b/2}\|_{L^2(Mdv)}\left\|\frac{\sqrt{G'_\eps G'_{\eps 1}}-\sqrt{G_\eps G_{\eps 1}}}{\eps^2}\right\|_{L^2_{t,x,\mu}}
\\
&=O(1)
\left\|\frac1\eps(1-\hat\g_{\eps 1})\sqrt{G_{\eps 1}}
(1+|v_1|)^{\b/2}\right\|_{L^2([0,T]\times K;L^2(M_1dv_1)))}
\end{aligned}
$$
provided that $\xi(v)=O(|v|^m)$ for some $m\in\bN$. 

Since $\Supp(1-\hat\g)\subset[\tfrac32,+\infty)$, then 
$\frac{\sqrt{G_\eps}}{\sqrt{G_\eps}-1}\le\frac{\sqrt{3/2}}{\sqrt{3/2}-1}$ whenever $\hat\g_\eps\not=1$,
and one has
$$
\frac1\eps|1-\hat\g_\eps|\sqrt{G_\eps}\le\tfrac{\sqrt{3}}{\sqrt{3}-\sqrt{2}}
|1-\hat\g_\eps|\frac{|\sqrt{G_\eps}-1|}{\eps}\,.
$$
Furthermore, as
 \begin{equation}
\label{1-hg->0}
|1-\hat\g_\eps|\le 1+\|\g'\|_{L^\infty}\hbox{ and }1-\hat\g_\eps\to 0\hbox{ a.e.,}
\end{equation}
the uniform integrability stated in Proposition \ref{UINT-PR} and the Product Limit Theorem (see
Appendix A) imply that
\begin{equation}
\label{1-gamma}
|1-\hat\g_\eps|\frac{|\sqrt{G_\eps}-1|}{\eps} \to 0 \hbox{ in } L^2([0,T]\times K, L^2(M(1+|v|)^\b dv)).
\end{equation}

Thus
\begin{equation}
\label{D221->0}
\|\bD^{221}_\eps(\xi)\|_{L^1([0,T]\times K)}
\to 0\hbox{ as }\eps\to 0\,.
\end{equation}

\bigskip
Finally, we consider the term $\bD^{222}_\eps(\xi)$: one has
$$
\begin{aligned}
\|\bD^{222}_\eps(\xi)\|&_{L^1([0,T]\times K)}\\
&\le \frac2\eps\left(\|(1-\hat\g'_\eps)\xi\|_{L^2([0,T]\times K;L^2_\mu)}
+\|(1-\hat\g'_{\eps_1})\xi\|_{L^2([0,T]\times K;L^2_\mu)}\right)
\\
&\qquad   \|\hat\g_\eps\sqrt{G_\eps}\|^2_{L^\infty_{t,x,v}}\left\|\frac{\sqrt{G'_\eps G'_{\eps 1}}
-\sqrt{G_\eps G_{\eps 1}}}{\eps^2}\right\|_{L^2_{t,x,\mu}}
\\
&= O(1)\left\|\frac{1-\hat\g_\eps}{\eps}
(1+|v|^2+|v_1|^2)\right\|_{L^2([0,T]\times K;L^2_\mu)}
\\
&= O(1)\left\|\frac{1-\hat\g_\eps}{\eps}(1+|v|)^{2+\b/2}\right\|_{L^2([0,T]\times K;L^2(Mdv))}\,,
\end{aligned}
$$
where the first equality uses the $(vv_1)-(v'v'_1)$ symmetry in (\ref{CollSym}).

Since $\Supp(1-\hat\g)\subset[\tfrac32,+\infty)$, $\frac1{\sqrt{G_\eps}-1}\le\frac{1}{\sqrt{3/2}-1}$ 
whenever $\hat\g_\eps\not=1$, one has
$$
\begin{aligned}
&\frac{|1-\hat\g_\eps|^2}{\eps^2}\leq\tfrac{\sqrt{2}}{\sqrt{3}-\sqrt{2}}\frac{|1-\hat\g_\eps|}{\eps}
\frac{\sqrt{G_\eps}-1}{\eps}
\\
&\leq\tfrac{\sqrt{2}}{\sqrt{3}-\sqrt{2}}\frac{|1-\hat\g_\eps|}{\eps}
\left( \Pi\frac{\sqrt{G_\eps}-1}{\eps}
	+ \left(\frac{\sqrt{G_\eps}-1}{\eps}-\Pi\frac{\sqrt{G_\eps}-1}{\eps}\right)\right)
\end{aligned}
$$
By (\ref{1-hg->0}) and (\ref{1-gamma})
\be
\lb{1-hg/eps}
\frac{|1-\hat\g_\eps|}{\eps} \leq\frac{1+\| \gamma'\|_{L^\infty}}{\eps} 
\hbox{ and } \frac{|1-\hat\g_\eps|}{\eps} \to 0 \hbox{ in } L^2_{loc}(dtdx, L^2(Mdv))
\ee
since $\sqrt{G_\eps}-1>\sqrt{3/2}-1$ whenever $\hat\g_\eps\not=1$, whereas by Proposition
\ref{fluct-control} (b) and Lemma \ref{I-PI-LM}
$$
\begin{aligned}
\Pi\frac{\sqrt{G_\eps}-1}{\eps} =O(1)_{L^\infty_t(L^2_x(L^q(Mdv)))} \,,\\
\frac{\sqrt{G_\eps}-1}{\eps}-\Pi\frac{\sqrt{G_\eps}-1}{\eps} =O(\eps )_{L^1_{loc}(dtdx, L^2(Mdv))}\,,
\end{aligned}
$$
for all $q>+\infty$.
Then,
$$
\frac{|1-\hat\g_\eps|^2}{\eps^2} =O(1)_{L^1_{loc}(dtdx, L^q(Mdv))}
$$
for all $q<2$. In particular, for each $r<+\infty$, $\left(\frac{1}{\eps}(1-\hat\g_\eps)(1+|v|)^{r}\right)$ 
is uniformly bounded in $L^2_{loc}(dtdx, L^2(Mdv))$. By interpolation with (\ref{1-hg/eps}) we 
conclude that
\begin{equation}
\label{g-1/eps}
\left\|\frac{1-\hat\g_\eps}{\eps}(1+|v|)^{r}\right\|^2_{L^2([0,T]\times K;L^2(Mdv))}\to 0 
\hbox{ as }\eps \to 0
\end{equation}
and consequently
\begin{equation}
\label{D222->0}
\bD^{222}_\eps(\xi)\to 0\hbox{ in }L^1_{loc}(dtdx)\hbox{ as }\eps\to 0\,.
\end{equation}

The convergences (\ref{D1->0}), (\ref{D21->0}), (\ref{D221->0}), (\ref{D222->0}), (\ref{D231->0})
and (\ref{D232->0}) eventually imply Proposition \ref{DF-PR}.
\end{proof}

\smallskip
\noindent
\textbf{Remark.} The same arguments leading to (\ref{1-gamma}) and to (\ref{g-1/eps}) imply 
that, for each $r\in\bR$,
\begin{equation}
\label{ga-1/eps}
\left\|\frac{1-\g_\eps}{\eps}(1+|v|)^{r}\right\|^2_{L^2([0,T]\times K;L^2(Mdv))}
	\to 0 
\hbox{ as }\eps \to 0\,.
\end{equation}


\section{Asymptotic behavior of the flux terms}


The purpose of the present section is to establish the following

\begin{Prop}\label{ASMP-PR}
Under the same assumptions as in Theorem \ref{BNSW-TH}, one has
$$
\bF_\eps(\zeta)-2\La\zeta\left(\Pi\frac{\sqrt{G_\eps}-1}{\eps}\right)^2\Ra
    +\frac2{\eps^2}\La\hat\zeta\cQ(\sqrt{G_\eps},\sqrt{G_\eps})\Ra\to 0
    \hbox{ in }L^1_{loc}(dtdx)
$$
as $\eps\to 0$, where $\zeta$ and $\hat\zeta$ designate respectively either $A$ and $\hat A$
or $B$ and $\hat B$ defined by (\ref{Def-AB}) and (\ref{LABhat}).
\end{Prop}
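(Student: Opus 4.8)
The plan is to separate $\bF_\eps(\zeta)$ into a quadratic \emph{convection} piece and a linear \emph{diffusion} piece by inserting the elementary identity (\ref{identity1}). Writing $r_\eps:=\frac{\sqrt{G_\eps}-1}\eps$, so that $g_\eps=2r_\eps+\eps r_\eps^2$, one obtains
\[\bF_\eps(\zeta)=\frac1\eps\la\zeta_{K_\eps}g_\eps\g_\eps\ra=\frac2\eps\la\zeta_{K_\eps}r_\eps\g_\eps\ra+\la\zeta_{K_\eps}r_\eps^2\g_\eps\ra=:A_\eps+B_\eps.\]
I expect each of $A_\eps$ and $B_\eps$ to contribute exactly $\la\zeta(\Pi r_\eps)^2\ra$ in the limit, which is what produces the factor $2$ in the statement; the diffusion term $-\frac2{\eps^2}\la\hat\zeta\cQ(\sqrt{G_\eps},\sqrt{G_\eps})\ra$ will emerge from $A_\eps$ alone.

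For $A_\eps$, I first replace $\g_\eps$ by $1$ and drop the truncation $\zeta_{K_\eps}\rightsquigarrow\zeta$. The correction $\frac2\eps\la\zeta_{K_\eps}r_\eps(\g_\eps-1)\ra$ vanishes by Cauchy--Schwarz after loading all the weight onto the factor $\frac{\g_\eps-1}\eps$, which tends to $0$ in $L^2_{loc}(dtdx;L^2((1+|v|)^rMdv))$ for every $r$ by (\ref{ga-1/eps}), the remaining factor $(1+|v|)^{-2}\zeta_{K_\eps}r_\eps$ being bounded in $L^2(Mdv)$ by the weight-$2$ estimate of the Remark following Proposition \ref{UINTV-PR}; the truncation error $\frac2\eps\la\zeta\indc_{|v|^2>K_\eps}r_\eps\ra$ is $O(\eps^{\mathbf{k}/4-1})$ by the Gaussian-tail Lemma \ref{GSTL-LM} and $\mathbf{k}>6$. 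Since $\zeta\bot\Ker\cL$, I then use $\cL\hat\zeta=\zeta$, the self-adjointness of $\cL$, and the pointwise relation $\cL r_\eps=\eps\cQ(r_\eps,r_\eps)-\frac1\eps\cQ(\sqrt{G_\eps},\sqrt{G_\eps})$ (derived as in (\ref{Q-Exp}) but with the genuine $\cL,\cQ$, from $\sqrt{G_\eps}=1+\eps r_\eps$, $\cQ(1,1)=0$ and $\cQ(1,\cdot)=-\tfrac12\cL$) to get the exact identity $\frac2\eps\la\zeta r_\eps\ra=2\la\hat\zeta\cQ(r_\eps,r_\eps)\ra-\frac2{\eps^2}\la\hat\zeta\cQ(\sqrt{G_\eps},\sqrt{G_\eps})\ra$. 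Finally $2\la\hat\zeta\cQ(r_\eps,r_\eps)\ra\to\la\zeta(\Pi r_\eps)^2\ra$: bilinearity and the continuity bound (\ref{Cnt-Q}) reduce $\cQ(r_\eps,r_\eps)$ to $\cQ(\Pi r_\eps,\Pi r_\eps)$ modulo terms carrying a factor $(I-\Pi)r_\eps\to0$ in $L^2((1+|v|)^\b Mdv)$ (Proposition \ref{RLXT-PR}, $\b\le1<2$), after which (\ref{QKerL}) and self-adjointness give $2\la\hat\zeta\cQ(\Pi r_\eps,\Pi r_\eps)\ra=\la\hat\zeta\cL((\Pi r_\eps)^2)\ra=\la\zeta(\Pi r_\eps)^2\ra$.

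For $B_\eps$, after discarding $\g_\eps$ and removing the truncation from the smooth polynomial target, it remains to show $\la\zeta_{K_\eps}r_\eps^2\g_\eps\ra-\la\zeta(\Pi r_\eps)^2\ra\to0$. Expanding $r_\eps^2=(\Pi r_\eps)^2+2\,\Pi r_\eps\,(I-\Pi)r_\eps+((I-\Pi)r_\eps)^2$, the cross term is controlled by placing the weight $(1+|v|)^{p/2}$ ($p<2$) on $(I-\Pi)r_\eps$, where Proposition \ref{RLXT-PR} forces convergence to $0$, while $\zeta_{K_\eps}(1+|v|)^{-p/2}\Pi r_\eps$ has $\eps$-uniform finite $L^2(Mdv)$ norm because $\Pi r_\eps$ is an explicit quadratic polynomial. \textbf{The genuine obstacle} is the purely quadratic term $\la\zeta_{K_\eps}((I-\Pi)r_\eps)^2\ra$ for $\zeta=B\sim|v|^3$, whose growth outruns the weight $p<2$ available from Propositions \ref{UINT-PR}--\ref{RLXT-PR}. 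I would resolve it by interpolating the unweighted rate $\|(I-\Pi)r_\eps\|_{L^2(Mdv)}=O(\eps)_{L^1_{loc}(dtdx)}$ of (\ref{Crcv-1'}) against the weight-$2$ bound of the Remark after Proposition \ref{UINTV-PR}: for $s<2$ one has $\la(1+|v|)^s((I-\Pi)r_\eps)^2\ra\le\|(I-\Pi)r_\eps\|_{L^2(Mdv)}^{2-s}\,(\la(1+|v|)^2((I-\Pi)r_\eps)^2\ra)^{s/2}=O(\eps^{2-s})$, so that the crude bound $\zeta_{K_\eps}\le CK_\eps^{(3-s)/2}(1+|v|)^s$ on $\{|v|^2\le K_\eps\}$ yields $O(\eps^{2-s}|\ln\eps|^{(3-s)/2})\to0$, the power of $\eps$ beating the logarithm coming from $K_\eps=\mathbf{k}|\ln\eps|$.

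Combining the two asymptotics gives $\bF_\eps(\zeta)+\frac2{\eps^2}\la\hat\zeta\cQ(\sqrt{G_\eps},\sqrt{G_\eps})\ra=B_\eps+2\la\hat\zeta\cQ(r_\eps,r_\eps)\ra+o(1)\to2\la\zeta(\Pi r_\eps)^2\ra$ in $L^1_{loc}(dtdx)$, which is the claim. The hardest part is precisely the cubic growth of $B$: every manipulation that retains the velocity truncation costs a factor $K_\eps=\mathbf{k}|\ln\eps|$, and the whole argument rests on always being able to extract a genuine positive power of $\eps$ — from the relaxation rate (\ref{Crcv-1'}) or the Gaussian tails of Lemma \ref{GSTL-LM} — to absorb those logarithmic losses.
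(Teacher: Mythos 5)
Your decomposition of $\bF_\eps(\zeta)$ via the identity (\ref{identity1}) is exactly the paper's, and your treatment of the linear piece $A_\eps=\bF^2_\eps(\zeta)$ reproduces the paper's proof of (\ref{F2->}) almost verbatim: removal of $\g_\eps$ by (\ref{ga-1/eps}), Gaussian tails by Lemma \ref{GSTL-LM}, the identity $\frac1\eps\cL\bigl(\frac{\sqrt{G_\eps}-1}{\eps}\bigr)=\cQ\bigl(\frac{\sqrt{G_\eps}-1}{\eps},\frac{\sqrt{G_\eps}-1}{\eps}\bigr)-\frac1{\eps^2}\cQ(\sqrt{G_\eps},\sqrt{G_\eps})$ combined with $\cL\hat\zeta=\zeta$, and the closure via (\ref{QKerL}), (\ref{Cnt-Q}) and Proposition \ref{RLXT-PR} are all there, and all correct. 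The divergence — and the genuine gap — is in the quadratic piece $B_\eps=\bF^1_\eps(\zeta)$, where you expand $r_\eps^2=(\Pi r_\eps)^2+2\Pi r_\eps(I-\Pi)r_\eps+((I-\Pi)r_\eps)^2$ (with $r_\eps=\frac{\sqrt{G_\eps}-1}\eps$) and try to kill the last term by interpolation. Your inequality $\la(1+|v|)^s((I-\Pi)r_\eps)^2\ra\le\|(I-\Pi)r_\eps\|_{L^2(Mdv)}^{2-s}\la(1+|v|)^2((I-\Pi)r_\eps)^2\ra^{s/2}$ is a \emph{pointwise-in-$(t,x)$} statement; the asserted conclusion $O(\eps^{2-s})$ names no $(t,x)$-space, and none of the available controls delivers one in $L^1_{loc}(dtdx)$. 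Indeed the rate in (\ref{Crcv-1'}) holds only in $L^1_{loc}(dtdx)$, the weight-$2$ bound of the Remark after Proposition \ref{UINTV-PR} holds only in $L^1_{loc}(dtdx;L^1(Mdv))$, so H\"older in $(t,x)$ pairing the factor raised to the power $s/2$ (which sits in $L^{2/s}_{t,x}$ at best) forces the $L^{2/(2-s)}_{t,x}$, i.e.\ at least $L^2_{t,x}$, norm of $\|(I-\Pi)r_\eps\|_{L^2(Mdv)}$ — and interpolating $O(\eps)_{L^1_{loc}}$ with the entropy bound $O(1)_{L^\infty_t(L^2_x)}$ gives $O(\eps^{2/q-1})$ in $L^q_{loc}$, a rate that degenerates exactly at the exponent $q=2$ your pairing requires. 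Nor does Lemma \ref{I-PI-LM} rescue it: its quadratic remainder $O(\eps)\|r_\eps\|^2_{L^2(Mdv)}$ lies only in $L^\infty_t(L^1_x)$, which again cannot be H\"older-paired against $L^{2/s}_{t,x}$. So no positive power of $\eps$ survives to absorb $K_\eps^{(3-s)/2}$, and the step fails as written.

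The paper's proof avoids this altogether by never squaring $(I-\Pi)r_\eps$: it factorizes $r_\eps^2-(\Pi r_\eps)^2=\bigl((I-\Pi)r_\eps\bigr)\bigl(r_\eps+\Pi r_\eps\bigr)$ (its term $\bF^{11}_\eps$), and — crucially — \emph{keeps} $\g_\eps$ glued to the quadratic part, which you explicitly discard. Since $\g_\eps=0$ when $G_\eps>2$, one has $|\g_\eps r_\eps|\le 1/\eps$ pointwise, and this $O(1/\eps)$ exactly compensates the $O(\eps)_{L^1_{loc}(dtdx)}$ rate of (\ref{Crcv-1'}) to show that $\zeta_{K_\eps}\g_\eps(r_\eps+\Pi r_\eps)$ is \emph{bounded} in $L^2_{loc}(dtdx;L^2(Mdv))$ — the cubic weight $\zeta$ being absorbed directly by the Gaussian, with no velocity truncation and hence no $|\ln\eps|$ losses (see (\ref{Estm-F11-1})--(\ref{Estm-F11-2})). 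The proof then closes with the single remaining factor $(I-\Pi)r_\eps\to0$ \emph{strongly} in $L^2_{loc}(dtdx;L^2(Mdv))$ from Proposition \ref{RLXT-PR}: a rate-free argument, immune to the logarithmic bookkeeping that defeats your version. Consistently with this, $\g_\eps$ is removed in the paper only against the hydrodynamic part $(\Pi r_\eps)^2$ (its $\bF^{12}_\eps$, via the all-moments uniform integrability (\ref{Uintv-Pi}) and the Product Limit Theorem), never against the full $r_\eps^2$, where — as in your "discarding $\g_\eps$" step — one would again trade a factor $K_\eps^{1/2}$ or more against convergences that carry no rate.
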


\begin{proof}
First, we decompose the flux term $\bF_\eps(\zeta)$ as follows:
$$
\begin{aligned}
\bF_\eps(\zeta)&=\frac1\eps\La\zeta_{K_\eps}g_\eps\g_\eps\Ra
    =\La\zeta_{K_\eps}\frac{G_\eps-1}{\eps^2}\g_\eps\Ra
\\
&=\La\zeta_{K_\eps}\left(\frac{\sqrt{G_\eps}-1}{\eps}\right)^2\g_\eps\Ra
+
\frac2\eps\La\zeta_{K_\eps}\frac{\sqrt{G_\eps}-1}{\eps}\g_\eps\Ra
\\
&=\bF^1_\eps(\zeta)+\bF^2_\eps(\zeta)\,.
\end{aligned}
$$
We further split the term $\bF^1_\eps(\zeta)$ as
$$
\bF^1_\eps(\zeta)=\bF^{11}_\eps(\zeta)+\bF^{12}_\eps(\zeta)+\bF^{13}_\eps(\zeta)
$$
with
\begin{equation}
\label{Dcmp-F1}
\begin{aligned}
\bF^{11}_\eps(\zeta)&=
\La\zeta_{K_\eps}\left(\frac{\sqrt{G_\eps}-1}{\eps}-\Pi\frac{\sqrt{G_\eps}-1}{\eps}\right)
    \left(\frac{\sqrt{G_\eps}-1}{\eps}+\Pi\frac{\sqrt{G_\eps}-1}{\eps}\right)\g_\eps\Ra\,,
\\
\bF^{12}_\eps(\zeta)&=
\La\zeta(\indc_{|v|^2\le K_\eps}\g_\eps-1)\left(\Pi\frac{\sqrt{G_\eps}-1}{\eps}\right)^2\Ra\,,
\\
\bF^{13}_\eps(\zeta)&=\La\zeta\left(\Pi\frac{\sqrt{G_\eps}-1}{\eps}\right)^2\Ra\,.
\end{aligned}
\end{equation}

\bigskip
The term $\bF^{12}_\eps(\zeta)$ is easily disposed of. Indeed, the definition (\ref{Dfnt-Pi}) 
of the hydrodynamic projection $\Pi$ implies that $\left(\Pi\frac{\sqrt{G_\eps}-1}{\eps}\right)^2(1+|v|)^p$ 
is, for each $p\ge 0$, a (finite) linear combination of functions of $v$ of order $O(|v|^{p+4})$ 
as $|v|\to+\infty$, with coefficients that are quadratic in $\la\xi\frac{\sqrt{G_\eps}-1}{\eps}\ra$ 
for $\xi\in\{1,v_1,v_2,v_3,|v|^2\}$. Together with Proposition \ref{UINT-PR}, this implies that, 
for each $T>0$ and each compact $K\subset\bR^3$,
\begin{equation}
\label{Uintv-Pi}
\left(\Pi\frac{\sqrt{G_\eps}-1}{\eps}\right)^2(1+|v|)^p\hbox{ is uniformly integrable on }
[0,T]\times K\times\bR^3
\end{equation}
for the measure $Mdvdxdt$. On the other hand,
$$
\indc_{|v|^2\le K_\eps}\g_\eps-1\to 0
    \hbox{ and }|\indc_{|v|^2\le K_\eps}\g_\eps-1|\le 1\hbox{ a.e.}\,.
$$
Since $\zeta(v)=O(|v|^3)$ as $|v|\to+\infty$, this and the Product Limit Theorem imply that
\begin{equation}
\label{Estm-F12}
\bF^{12}_\eps(\zeta)\to 0\hbox{ in }L^1_{loc}(dtdx)\,.
\end{equation}

\bigskip
The term $\bF^{11}_\eps(\zeta)$ requires a slightly more involved treatment. We start with 
the following decomposition: for each $T>0$ and each compact $K\subset \bR^3$,
\begin{equation}
\label{Estm-F11-1}
\begin{aligned}
\|\bF^{11}_\eps(\zeta)\|_{L^1([0,T]\times K)}
\le\left\|\zeta_{K_\eps}\g_\eps\left(\frac{\sqrt{G_\eps}-1}{\eps}
    +\Pi\frac{\sqrt{G_\eps}-1}{\eps}\right)\right\|_{L^2([0,T]\times K;L^2(Mdv))}
\\
\times\left\|\frac{\sqrt{G_\eps}-1}{\eps}
    -\Pi\frac{\sqrt{G_\eps}-1}{\eps}\right\|_{L^2([0,T]\times K;L^2(Mdv))}
\end{aligned}
\end{equation}
Since $\g_\eps=\g(G_\eps)=0$ whenever $G_\eps>2$, one has for each $q<+\infty$,
$$
\begin{aligned}
\g_\eps&\left(\frac{\sqrt{G_\eps}-1}{\eps}\right)^2
\\
&=\g_\eps\left(\frac{\sqrt{G_\eps}-1}{\eps}\right)\left(\Pi \frac{\sqrt{G_\eps}-1}{\eps}
+\left( \frac{\sqrt{G_\eps}-1}{\eps}-\Pi \frac{\sqrt{G_\eps}-1}{\eps}\right)\right)
\\
&= O (1)_{L^\infty_t(L^2(dxMdv))} O(1)_{L^\infty_t(L^2_x(L^q(Mdv)))} 
	+O\left(\frac1\eps\right) O(\eps )_{L^1_{loc}(dtdx, L^2(Mdv))}
\end{aligned}$$
In particular
$$
\left\|\zeta_{K_\eps}\g_\eps\frac{\sqrt{G_\eps}-1}{\eps}\right\|_{L^2([0,T]\times K;L^2(Mdv))}=O(1)
$$
since $\zeta(v)=O(|v|^3)$ as $|v|\to+\infty$. This and (\ref{Uintv-Pi}) imply that
\begin{equation}
\label{Estm-F11-2}
\left\|\zeta_{K_\eps}\g_\eps\left(\frac{\sqrt{G_\eps}-1}{\eps}
+\Pi\frac{\sqrt{G_\eps}-1}{\eps}\right)\right\|_{L^2_{loc}(dtdx;L^2(Mdv))}=O(1)\,.
\end{equation}
Using (\ref{Estm-F11-1}), (\ref{Estm-F11-2}) and Proposition \ref{RLXT-PR} shows that
\begin{equation}
\label{Estm-F11}
\bF^{11}_\eps(\zeta)\to 0\hbox{ in }L^1_{loc}(dtdx)\,.
\end{equation}
This and (\ref{Estm-F12}) imply that
\begin{equation}
\label{F1->}
\bF^1_\eps(\zeta)-\La\zeta\left(\Pi\frac{\sqrt{G_\eps}-1}{\eps}\right)^2\Ra\to 0
\hbox{ in }L^1_{loc}(dtdx)
\end{equation}
as $\eps\to 0$.

\bigskip
Next we handle the term $\bF^2_\eps(\zeta)$. We first decompose it as follows:
\begin{equation}
\label{Dcmp-F2}
\begin{aligned}
\bF^2_\eps(\zeta)&=-\frac2\eps\La\zeta\indc_{|v|^2>K_\eps}\g_\eps
\frac{\sqrt{G_\eps}-1}{\eps}\Ra
\\
&+
2\La\zeta\frac{\g_\eps-1}{\eps}\frac{\sqrt{G_\eps}-1}{\eps}\Ra
+
\frac2\eps\La\zeta\frac{\sqrt{G_\eps}-1}{\eps}\Ra
\\
&=\bF^{21}_\eps(\zeta)+\bF^{22}_\eps(\zeta)+\bF^{23}_\eps(\zeta)\,.
\end{aligned}
\end{equation}
Then, by (\ref{Entr-estm2}) and Lemma \ref{GSTL-LM}, one has
\begin{equation}
\label{Estm-F21}
\begin{aligned}
\|\bF^{21}_\eps(\zeta)\|_{L^\infty_t(L^2_x)}
&\le\frac2\eps\|\g\|_{L^\infty}\|\zeta\indc_{|v|^2>K_\eps}\|_{L^2(Mdv)}
    \left\|\frac{\sqrt{G_\eps}-1}{\eps}\right\|_{L^\infty_t(L^2(Mdvdx))}
\\
&\le \frac2\eps O(e^{-K_\eps/2}K_\eps^2)=O(\eps^{{\bf k}/2-1}|\ln\eps|^2)\,.
\end{aligned}
\end{equation}
On the other hand, for each $T>0$ and each compact $K\subset \bR^3$,
\begin{equation}
\label{Estm-F22}
\begin{aligned}
\|\bF^{22}_\eps(\zeta)\|&_{L^1([0,T]\times K)}
\\
&\le 2T^{1/2}
\left\|\zeta\frac{\g_\eps-1}{\eps}\right\|_{L^2([0,T]\times K;L^2(Mdv))}
\left\|\frac{\sqrt{G_\eps}-1}{\eps}\right\|_{L^\infty_t(L^2(Mdvdx))}
\\
&\to 0\hbox{ as }\eps\to 0
\end{aligned}
\end{equation}
because of (\ref{Entr-estm2}) and of (\ref{ga-1/eps}), since $\zeta(v)=O(|v|^3)$ as $|v|\to+\infty$.

Finally, we transform $\bF^{23}_\eps(\zeta)$ as follows:
$$
\begin{aligned}
\bF^{23}_\eps(\zeta)&=
2\La\hat\zeta\frac1\eps\cL\left(\frac{\sqrt{G_\eps}-1}{\eps}\right)\Ra
\\
&=2\La\hat\zeta
\left(\cQ\left(\frac{\sqrt{G_\eps}-1}{\eps},\frac{\sqrt{G_\eps}-1}{\eps}\right)
	-\frac1{\eps^2}\cQ(\sqrt{G_\eps},\sqrt{G_\eps})\right)\Ra
\end{aligned}
$$
Writing
$$
\begin{aligned}
\cQ\left(\frac{\sqrt{G_\eps}-1}{\eps},\frac{\sqrt{G_\eps}-1}{\eps}\right)
=
\cQ\left(\Pi\frac{\sqrt{G_\eps}-1}{\eps},\Pi\frac{\sqrt{G_\eps}-1}{\eps}\right)
\\
+\cQ\left(\frac{\sqrt{G_\eps}-1}{\eps}-\Pi\frac{\sqrt{G_\eps}-1}{\eps},
\frac{\sqrt{G_\eps}-1}{\eps}+\Pi\frac{\sqrt{G_\eps}-1}{\eps}\right)
\end{aligned}
$$
and using the classical relation (see \cite{BGL1} for instance)
$$
\cQ(\phi,\phi)=\tfrac12\cL(\phi^2)\hbox{ for each }\phi\in\Ker\cL\,,
$$
we arrive at
$$
\begin{aligned}
{}&\cQ\left(\frac{\sqrt{G_\eps}-1}{\eps},\frac{\sqrt{G_\eps}-1}{\eps}\right)
=
\tfrac12\cL\left(\left(\Pi\frac{\sqrt{G_\eps}-1}{\eps}\right)^2\right)
\\
&\qquad\qquad+\cQ\left(\frac{\sqrt{G_\eps}-1}{\eps}-\Pi\frac{\sqrt{G_\eps}-1}{\eps},
\frac{\sqrt{G_\eps}-1}{\eps}+\Pi\frac{\sqrt{G_\eps}-1}{\eps}\right)
\end{aligned}
$$
Thus
\begin{equation}
\label{Dcmp-F23}
\begin{aligned}
\bF^{23}_\eps(\zeta)&=\La\zeta\left(\Pi\frac{\sqrt{G_\eps}-1}{\eps}\right)^2\Ra
-\frac2{\eps^2}\La\hat\zeta\cQ(\sqrt{G_\eps},\sqrt{G_\eps})\Ra
\\
&+2
\La\hat\zeta\cQ\left(\frac{\sqrt{G_\eps}-1}{\eps}-\Pi\frac{\sqrt{G_\eps}-1}{\eps},
\frac{\sqrt{G_\eps}-1}{\eps}+\Pi\frac{\sqrt{G_\eps}-1}{\eps}\right)\Ra
\end{aligned}
\end{equation}
By continuity of $\cQ$ (see (\ref{Cnt-Q})),
$$
\begin{aligned}
\left\|\La\hat\zeta\cQ\left(\frac{\sqrt{G_\eps}-1}{\eps}-\Pi\frac{\sqrt{G_\eps}-1}{\eps},
\frac{\sqrt{G_\eps}-1}{\eps}+\Pi\frac{\sqrt{G_\eps}-1}{\eps}\right)\Ra\right\|_{L^1([0,T]\times K)}
\\
\le C
\|\hat\zeta\|_{L^2(a Mdv)}
\left\|\frac{\sqrt{G_\eps}-1}{\eps}-\Pi\frac{\sqrt{G_\eps}-1}{\eps}\right\|
    _{L^2([0,T]\times K;L^2((1+|v|)^\b Mdv))}
\\
\times\left\|\frac{\sqrt{G_\eps}-1}{\eps}+\Pi\frac{\sqrt{G_\eps}-1}{\eps}\right\|
    _{L^2([0,T]\times K;L^2((1+|v|)^\b Mdv))}\to 0
\end{aligned}
$$
as $\eps\to 0$, for each $T>0$ and each compact $K\subset \bR^3$, because of (\ref{Uintv-Pi}) 
and Proposition \ref{RLXT-PR}.

\bigskip
Thus, by (\ref{Estm-F21}), (\ref{Estm-F22}) and (\ref{Dcmp-F23})
\begin{equation}
\label{F2->}
\bF^2_\eps(\zeta)-\La\zeta\left(\Pi\frac{\sqrt{G_\eps}-1}{\eps}\right)^2\Ra
    +\frac2{\eps^2}\La\hat\zeta\cQ(\sqrt{G_\eps},\sqrt{G_\eps})\Ra\to 0\,.
\end{equation}
in $L^1_{loc}(dtdx)$ as $\eps\to 0$.

The convergences (\ref{F1->}) and (\ref{F2->}) eventually imply Proposition \ref{ASMP-PR}.
\end{proof}


\section{Proof of Theorem \ref{BNSW-TH}}


Throughout this section $U\equiv U(x)$ designates an arbitrary compactly supported, $C^\infty$, 
divergence-free vector field on $\bR^3$. Taking the inner product with $U$ of both sides of
(\ref{Mmnt-v}) gives
\begin{equation}
\label{Mmnt-Pv->0}
\begin{aligned}
\d_t\int\la v_{K_{\eps}}g_\eps\g_\eps\ra\cdot Udx
-\int\bF_\eps(A):\grad_xUdx&=\int\bD_\eps(v)\cdot Udx
\\
&\to 0\hbox{ in }L^1_{loc}(dt)\,,
\end{aligned}
\end{equation}
by Proposition \ref{DF-PR}. Likewise, the energy equation (\ref{Mmnt-v2}) and Proposition 
\ref{DF-PR} lead to
\begin{equation}
\label{Mmnt-v2->0}
\begin{aligned}
\d_t\la\tfrac12(|v|^2-5)_{K_{\eps}}g_\eps\g_\eps\ra+\Div_x\bF_\eps(B)&=\bD_\eps(\tfrac12(|v|^2-5))
\\
&\to 0\hbox{ in }L^1_{loc}(dtdx)\,.
\end{aligned}
\end{equation}

By Proposition \ref{ASMP-PR}, one can decompose the fluxes as
\begin{equation}
\label{FlxDcmp}
\begin{aligned}
\bF_\eps(A)=\bFc(A)+\bFd(A)+o(1)_{L^1_{loc}(dtdx)}
\\
\bF_\eps(B)=\bFc(B)+\bFd(B)+o(1)_{L^1_{loc}(dtdx)}
\end{aligned}
\end{equation}
where
\begin{equation}
\label{DfntFcFdA}
\begin{aligned}
\bFc(A)&=2\La A\left(\Pi\frac{\sqrt{G_\eps}-1}{\eps}\right)^2\Ra\,,
\\
\bFd(A)&=-2\La\hat A\frac1{\eps^2}\cQ(\sqrt{G_\eps},\sqrt{G_\eps})\Ra\,,
\end{aligned}
\end{equation}
while
\begin{equation}
\label{DfntFcFdB}
\begin{aligned}
\bFc(B)&=2\La B\left(\Pi\frac{\sqrt{G_\eps}-1}{\eps}\right)^2\Ra
\\
\bFd(B)&=-2\La\hat B\frac1{\eps^2}\cQ(\sqrt{G_\eps},\sqrt{G_\eps})\Ra\,.
\end{aligned}
\end{equation}
Classical computations (that can be found for instance in \cite{BGL2}) using the fact 
that $A$ is orthogonal in $L^2(Mdv)$ to $\Ker\cL$ as well as to odd functions  of $v$ 
and functions of $|v|^2$ show that
$$
\bFc(A)=2\la A\otimes A\ra : \La v\frac{\sqrt{G_\eps}-1}{\eps}\Ra^{\otimes 2}\,.
$$
In a similar way, $B$ is orthogonal in $L^2(Mdv)$ to $\Ker \cL$ and to even 
functions of $v$, so that
$$
\bFc(B)=2\la B\otimes B\ra \cdot\La v\frac{\sqrt{G_\eps}-1}{\eps}\Ra
    \La(\tfrac13|v|^2-1)\frac{\sqrt{G_\eps}-1}{\eps}\Ra\,.
$$

\subsection{Convergence of the diffusion terms}\label{diffusion-cv}

The convergence of $\bFd (A)$ and $\bFd(B)$ comes only from weak compactness results, 
and from the following characterization of the weak limits.

\begin{Prop}\label{TQ=-LM}
Under the same assumptions as in Theorem \ref{BNSW-TH}, one has, up to extraction 
of a subsequence 
$\eps_n \to 0$,
\begin{equation}
\label{Cnvg-gtq}
g_{\eps_n}\to g\,,\hbox{ and }
\frac{\sqrt{G'_{\eps_n}G'_{\eps_n 1}}-\sqrt{G_{\eps_n}G_{\eps_n 1}}}{\eps_n^2}
\to \tilde q
\end{equation}
in $\wL^1_{loc}(dtdx;L^1(Mdv))$ and in $\wL^2(dtdxd\mu)$ respectively.

Furthermore $g\in L^\infty_t (L^2(dxMdv))$ is an infinitesimal Maxwellian of the form
\begin{equation}
\label{g=}
g(t,x,v)=u(t,x)\cdot v+\th(t,x)\tfrac12(|v|^2-5)\,,\quad\Div_xu=0 \,,
\end{equation}
and $\tilde q \in L^2(dtdxd\mu)$ satisfies
\begin{equation}
\label{q-lim}
\iint\tilde qb(v-v_1,\om)d\om M_1dv_1=\tfrac12 v\cdot\grad_xg
	=\tfrac12(A:\grad_xu+B\cdot\grad_x\th)\,.
\end{equation}
\end{Prop}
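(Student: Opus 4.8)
The plan is to read off both weak limits from the a priori estimates and then identify them separately: the structure of $g$ from the relaxation estimate together with the limiting local conservation laws, and the constraint on $\tilde q$ by passing to the limit in a square-root renormalized form of the kinetic equation. First I would extract the subsequence. By the fluctuation controls (b)--(c) in Proposition \ref{fluct-control}, the family $g_\eps$ is relatively compact in $L^1_{loc}(dtdx;L^1(Mdv))$ while $\frac{\sqrt{G_\eps}-1}\eps$ is bounded in $L^\infty_t(L^2(dxMdv))$, and by the entropy production bound (d) the scaled collision difference $\tilde q_\eps:=\frac{\sqrt{G'_\eps G'_{\eps 1}}-\sqrt{G_\eps G_{\eps 1}}}{\eps^2}$ is bounded in $L^2(dtdxd\mu)$. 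Hence along some $\eps_n\to0$ one has $g_{\eps_n}\to g$ and $\tilde q_{\eps_n}\rightharpoonup\tilde q$ in the two stated topologies. The identity (\ref{identity1}) gives $g_\eps-2\frac{\sqrt{G_\eps}-1}\eps=\eps\big(\frac{\sqrt{G_\eps}-1}\eps\big)^2=o(1)$ in $L^\infty_t(L^1(Mdvdx))$, so $\frac{\sqrt{G_{\eps_n}}-1}{\eps_n}\rightharpoonup\tfrac12 g$, and the $L^\infty_t(L^2)$ bound being preserved under weak limits, $g\in L^\infty_t(L^2(dxMdv))$.

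Next I would fix the form of $g$. Proposition \ref{RLXT-PR} shows $\frac{\sqrt{G_\eps}-1}\eps-\Pi\frac{\sqrt{G_\eps}-1}\eps\to0$, so the weak limit $\tfrac12 g$ lies in the closed subspace $\Ker\cL$; thus $g=\Pi g$ is an infinitesimal Maxwellian $\rho+u\cdot v+\th\tfrac12(|v|^2-3)$. Taking the $v$-moment of (\ref{BoltzFluct}) and using $\la v\,\cL g_\eps\ra=0$ and $\la v\,\cQ(g_\eps,g_\eps)\ra=0$ yields the renormalized local balances of mass and momentum, whose defects vanish by Proposition \ref{DF-PR}. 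The mass balance $\eps\d_t\la g_\eps\ra+\Div_x\la vg_\eps\ra=0$ gives $\Div_x u=0$ in the limit. Dividing the momentum balance by $\eps$ and splitting $\la v\otimes vg_\eps\ra=\la Ag_\eps\ra+\tfrac13\la|v|^2g_\eps\ra I$, the pressure term $\tfrac1\eps\grad_x\la\tfrac13|v|^2g_\eps\ra$ is the only possibly singular contribution, all others being $O(1)$ (the flux by Proposition \ref{ASMP-PR}); its boundedness forces $\grad_x\la\tfrac13|v|^2g\ra=\grad_x(\rho+\th)=0$, and since $\rho,\th\in L^\infty_t(L^2_x)$ this gives the Boussinesq identity $\rho=-\th$, i.e. $g=u\cdot v+\th\tfrac12(|v|^2-5)$.

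The substantive step is the constraint (\ref{q-lim}). I would avoid the inadmissible exact square root and work with $\Gamma_\eps(Z)=\frac{\sqrt{\eps^\a+Z}-1}\eps$, $\a\in(1,2)$, for which the computation in the proof of Lemma \ref{ADVC-LM} gives $(\eps\d_t+v\cdot\grad_x)\frac{\sqrt{\eps^\a+G_\eps}-1}\eps=Q^1_\eps+Q^2_\eps$ with $Q^1_\eps\to0$ in $L^1(Mdvdxdt)$ and $Q^2_\eps=\frac{\sqrt{G_\eps}}{\sqrt{\eps^\a+G_\eps}}\iint\sqrt{G_{\eps 1}}\,\tilde q_\eps\,b(v-v_1,\om)M_1dv_1d\om$. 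On the left, Step 1 of Proposition \ref{UINT-PR} shows $\frac{\sqrt{\eps^\a+G_\eps}-1}\eps$ differs from $\frac{\sqrt{G_\eps}-1}\eps$ by $o(1)$, so it too $\rightharpoonup\tfrac12 g$ and the whole left side tends to $\tfrac12 v\cdot\grad_x g$ in $\cD'$. On the right I would write $\sqrt{G_{\eps 1}}=1+\eps\frac{\sqrt{G_{\eps 1}}-1}\eps$: the resulting error is $O(\eps)$ by Cauchy--Schwarz (both $\frac{\sqrt{G_{\eps 1}}-1}\eps$ and $\tilde q_\eps$ being bounded in $L^2$), while the main term $\frac{\sqrt{G_\eps}}{\sqrt{\eps^\a+G_\eps}}\iint\tilde q_\eps\,b\,M_1dv_1d\om$ is the product of the multiplier $\frac{\sqrt{G_\eps}}{\sqrt{\eps^\a+G_\eps}}$, bounded by $1$ and tending to $1$ a.e., with the marginal $\iint\tilde q_\eps\,b\,M_1dv_1d\om$, which converges weakly in $L^2_{loc}$ to $\iint\tilde q\,b\,M_1dv_1d\om$; a weak--strong argument identifies the limit as $\iint\tilde q\,b\,M_1dv_1d\om$. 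Matching the two sides gives (\ref{q-lim}), and $v\cdot\grad_x g=A:\grad_x u+B\cdot\grad_x\th$ follows from $v\otimes v=A+\tfrac13|v|^2 I$ and $\Div_x u=0$.

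I expect the main obstacle to be the rigorous weak--strong passage to the limit on the right-hand side: combining the merely weak $L^2(d\mu)$ convergence of $\tilde q_\eps$ with the a.e. convergence of the $\frac{\sqrt{G_\eps}}{\sqrt{\eps^\a+G_\eps}}$ and $\sqrt{G_{\eps 1}}$ factors, and ensuring that the $v_1$-integration against the growing kernel $b$ does not spoil this. Here the weighted uniform integrability of Proposition \ref{UINTV-PR} and the entropy production bound are exactly what control the error term and legitimize exchanging the limit with the collision integral.
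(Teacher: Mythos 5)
Your proposal is correct and follows essentially the same route as the paper: the same weak-compactness extraction from Proposition \ref{fluct-control}, Proposition \ref{RLXT-PR} for the infinitesimal-Maxwellian form, the limiting renormalized mass and ($\eps$-rescaled) momentum balances with defects killed by Proposition \ref{DF-PR} for incompressibility and the Boussinesq relation, and the $\Gamma_\eps(Z)=\frac{\sqrt{\eps^\a+Z}-1}{\eps}$ renormalization with the $Q^1_\eps+Q^2_\eps$ splitting of Lemma \ref{ADVC-LM} plus the Product Limit Theorem for (\ref{q-lim}). Your only deviation is cosmetic: where the paper passes to the limit in $Q^2_\eps$ via the strong convergence $\sqrt{G_\eps}\to 1$ in $L^2_{loc}(dtdx;L^2((1+|v|)^\b Mdv))$ from Proposition \ref{UINT-PR}, you expand $\sqrt{G_{\eps 1}}=1+\eps\frac{\sqrt{G_{\eps 1}}-1}{\eps}$ and control the error by the weighted Cauchy--Schwarz estimate, which is exactly the computation already carried out in the proof of Lemma \ref{ADVC-LM}.
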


\begin{proof}
Proposition \ref{fluct-control} (c) shows that  
$$
(g_\eps) \hbox{ is relatively compact in }\wL^1_{loc}(dtdx;L^1(Mdv))
$$
while (\ref{Entr-prd1}) implies that
$$
\frac{\sqrt{G'_{\eps}G'_{\eps 1}}-\sqrt{G_{\eps}G_{\eps 1}}}{\eps^2}\hbox{ is relatively  compact in }
\wL^2(dtdxd\mu)\,,
$$
Pick then any sequence $\eps_n\to 0$ such that
$$g_{\eps_n}\to g\,,\hbox{ and }
\frac{\sqrt{G'_{\eps_n}G'_{\eps_n 1}}-\sqrt{G_{\eps_n}G_{\eps_n 1}}}{\eps_n^2}
\to \tilde q
$$
in $\wL^1_{loc}(dtdx;L^1(Mdv))$ and in $\wL^2(dtdxd\mu)$ respectively.

\bigskip
\noindent
\underbar{\it Step 1}~: from
(\ref{identity1}) we deduce that
$$ 
\frac1{\eps_n} (\sqrt{G_{\eps_n}}-1) \to g \hbox{ in }\wL^2_{loc}(dt, L^2(dxMdv))\,.
$$
In particular, by Proposition \ref{RLXT-PR},  $g$ is an infinitesimal Maxwellian, i.e. of the form
$$
g(t,x,v)=\rho(t,x) +u(t,x)\cdot v+\th(t,x)\tfrac12(|v|^2-3)\,.
$$

Taking limits in the local conservation of mass leads then to
$$
\Div_x\la vg\ra =0\,,
$$
or in other words
$$
\Div_xu  =0
$$
which is the incompressibility constraint.

Multiplying the approximate momentum  equation (\ref{Mmnt-v}) by $\eps$
$$
\eps\d_t\la v_{K_\eps}g_\eps\g_\eps\ra+\eps \Div_x\bF_\eps(A)
+\tfrac13\grad_x\la\tfrac13|v|^2_{K_\eps}g_\eps\g_\eps\ra=\eps \bD_\eps(v)\,,
$$
using Propositions \ref{DF-PR} and \ref{ASMP-PR} to control $\bD_\eps(v)$ 
and the remainder term in $\bF_\eps(A)$
$$
\bF_\eps(A)-2\La A\left(\Pi\frac{\sqrt{G_\eps}-1}{\eps}\right)^2\Ra
+2\LA\hat A\frac1{\eps^2}\left(\sqrt{G'_\eps G'_{\eps 1}}
	-\sqrt{G_\eps G_{\eps 1}}\right)\RA\to 0\,,
$$
 and estimating $\bFc(A)$ and $\bFd(A)$ by the entropy and entropy production 
 bounds (\ref{Entr-estm2})-(\ref{Entr-prd1})
$$
\La A\left(\Pi\frac{\sqrt{G_\eps}-1}{\eps}\right)^2\Ra=O(1)\hbox{ in }L^\infty_t(L^1_x)\,,
$$
$$
\LA\hat A\frac1{\eps^2}\left(\sqrt{G'_\eps G'_{\eps 1}}
	-\sqrt{G_\eps G_{\eps 1}}\right)\RA=O(1)_{L^2_{t,x}}\,,
$$ 
 we also obtain
$$
\nabla_x \la |v|^2 g \ra=0
$$
or equivalently, since $\la|v|^2g\ra=3(\rho+\th)\in L^\infty(\bR_+;L^2(\bR^3))$,
$$
\rho+\theta =0\,,
$$
which is the Boussinesq relation.
One therefore has
(\ref{g=}).

\bigskip\bigskip
\noindent
\underbar{\it Step 2}~: start from (\ref{Nrml-Sqrt-Bltz}) in the proof of Lemma 
\ref{ADVC-LM}~:
$$
(\eps\d_t+v\cdot\grad_x)\frac{\sqrt{\eps^\a+G_\eps}-1}{\eps}
	=\frac1{\eps^2}\frac1{2\sqrt{\eps^\a+G_\eps}}\cQ(G_\eps,G_\eps)
		=Q^1_\eps+Q^2_\eps\,.
$$
Recall that
\begin{equation}
\lb{Q1to0}
Q_\eps^1 \to 0 \hbox{ in } L^1(Mdvdxdt)\,.
\end{equation}
Next observe that
$$
Q^2_\eps=\frac{\sqrt{G_\eps}}{\sqrt{\eps^\a+G_\eps}}\iint\sqrt{G_{\eps 1}}
\frac{\sqrt{G'_\eps G'_{\eps 1}}-\sqrt{G_\eps G_{\eps 1}}}{\eps^2}b(v-v_1,\om)d\om M_1dv_1\,;
$$
Proposition \ref{UINT-PR} implies that
$$
\sqrt{G_\eps}\to 1\hbox{ in }L^2_{loc}(dtdx;L^2((1+|v|)^\b Mdv))\hbox{ as }\eps\to 0\,;
$$
this and the second limit in (\ref{Cnvg-gtq}) imply that
$$
\begin{aligned}
\iint\sqrt{G_{\eps_n1}}
\frac{\sqrt{G'_{\eps_n}G'_{\eps_n1}}-\sqrt{G_{\eps_n}G_{\eps_n1}}}{\eps_n^2}
    b(v-v_1,\om)d\om M_1dv_1
\\
\to \iint\tilde q b(v-v_1,\om)d\om M_1dv_1
\end{aligned}
$$
in $\wL^1_{loc}(dtdx;L^1(Mdv))$ as $n\to+\infty$. Since on the other hand
$$
\frac{\sqrt{G_\eps}}{\sqrt{\eps^\a+G_\eps}}\to 1\hbox{ a.e. as }\eps\to 0\hbox{ with }
0\le\frac{\sqrt{G_\eps}}{\sqrt{\eps^\a+G_\eps}}\le 1\,,
$$
we conclude from the Product Limit Theorem that
\begin{equation}
\label{Cnvg-Q2}
Q^2_{\eps_n}\to\iint\tilde q b(v-v_1,\om)d\om M_1dv_1
\end{equation}
in $\wL^1_{loc}(dtdx;L^1(Mdv))$ as $n\to+\infty$.

By (\ref{sqr-sqrea}), (\ref{gga-sqrt}) and (\ref{Cnvg-gtq})
$$
\frac{\sqrt{\eps^\a_n+G_\eps}-1}{\eps_n}\to\tfrac12g
$$
in $\wL^1_{loc}(dtdx;L^1((1+|v|^2)Mdv))$ whenever $\a\in ]1,2[$. Using 
(\ref{Q1to0}), (\ref{Cnvg-Q2}) and the convergence above, and passing to the 
limit in (\ref{Nrml-Sqrt-Bltz}) as $\eps_n\to 0$ leads to 
$$
\iint\tilde qb(v-v_1,\om)d\om M_1dv_1=\tfrac12 v\cdot\grad_xg\,,
$$
which is precisely the first equality in (\ref{q-lim}).
Finally, replacing $g$ by its expression (\ref{g=}) in the formula above leads to the second
equality in  (\ref{q-lim}).
\end{proof}
 
Since $\hat A$ and $\hat B\in L^2(aM dv)$, the second limit in 
(\ref{Cnvg-gtq})  and identity (\ref{q-lim}) show that
\begin{equation}
\label{Lmt-Dffs}
\begin{aligned}
\bFdn(A)&=-2\La\hat A\frac1{\eps^2}\cQ(\sqrt{G_\eps},\sqrt{G_\eps})\Ra
\\
&\to-\la\hat A\otimes A\ra :\grad_xu=-\nu(\grad_xu+(\grad_xu)^T)
\\
\bFdn(B)&=-2\La\hat B\frac1{\eps^2}\cQ(\sqrt{G_\eps},\sqrt{G_\eps})\Ra
\\
&\to-\la\hat B\otimes B\ra :\grad_x\th=-\ka\grad_x\th
\end{aligned}
\end{equation}
in $\wL^2(dtdx)$ as $\eps\to 0$, because of the divergence-free condition in (\ref{g=}).

\subsection{Convergence of the convection terms}
The goal of this section is to establish that
$$
\begin{aligned}
\int\bFc(A):\grad_xUdx\to\int u\otimes u:\grad_xU dx\\
\hbox{ and }\Div_x\bFc(B)\to\tfrac52\Div_x(u\th)
\end{aligned}
$$
in the sense of distributions on $\bR_+^*$ and on $\bR_+^*\times\bR^3$ respectively.

First, we replace $\bFc(A)$ and $\bFc(B)$ by asymptotically equivalent expressions.

Indeed, because of (\ref{gga-sqrt})
$$
\la vg_\eps\g_\eps\ra-2\La v\frac{\sqrt{G_\eps}-1}{\eps}\Ra\to 0\hbox{ in }L^2_{loc}(dtdx)
$$
and
$$
\la(\tfrac13|v|^2-1)g_\eps\g_\eps\ra-
    2\La(\tfrac13|v|^2-1)\frac{\sqrt{G_\eps}-1}{\eps}\Ra\to 0\hbox{ in }L^2_{loc}(dtdx)\,.
$$
On the other hand, $g_\eps\g_\eps$ is bounded in $L^\infty_t(L^2(Mdvdx))$ while 
$v\indc_{|v|^2>K_\eps}\to 0$ and $(\tfrac13|v|^2-1)\indc_{|v|^2>K_\eps}\to 0$ in $L^2(Mdv)$;
therefore
$$
\la vg_\eps\g_\eps\ra-\la v_{K_\eps}g_\eps\g_\eps\ra\to 0
\hbox{ and }
\la(\tfrac13|v|^2-1)g_\eps\g_\eps\ra-\la(\tfrac13|v|^2-1)_{K_\eps}g_\eps\g_\eps\ra\to 0
$$
in $L^2_{loc}(dtdx)$. Therefore
\begin{equation}
\label{FcA->}
\begin{aligned}
\bFc(A)
&=\tfrac12\la A\otimes A\ra : \la v_{K_\eps}g_\eps\g_\eps\ra^{\otimes 2}+o(1)_{L^1_{loc}(dtdx)}
\\
&=\la v_{K_\eps}g_\eps\g_\eps\ra^{\otimes 2}
	-\tfrac13|\la v_{K_\eps}g_\eps\g_\eps\ra|^2I
+o(1)_{L^1_{loc}(dtdx)}\,,
\end{aligned}
\end{equation}
while
\begin{equation}
\label{FcB->}
\begin{aligned}
\bFc(B)
&=\la B\otimes B\ra\cdot\la v_{K_\eps}g_\eps\g_\eps\ra
    \la(\tfrac13|v|^2-1)_{K_\eps}g_\eps\g_\eps\ra+o(1)_{L^1_{loc}(dtdx)}
\\
&=\tfrac52\la(\tfrac13|v|^2-1)_{K_\eps}g_\eps\g_\eps\ra\la v_{K_\eps}g_\eps\g_\eps\ra
+o(1)_{L^1_{loc}(dtdx)}\,.
\end{aligned}
\end{equation}

Furthermore, since $g_{\eps_n}\to g$ weakly in $L^1_{loc}(dtdx;L^1((1+|v|^2)Mdv))$ while
$$
\begin{aligned}
v_{K_\eps}\g_\eps\to v\hbox{ and }
(\tfrac13|v|^2-1)_{K_{\eps}}\g_\eps\to(\tfrac13|v|^2-1)\hbox{ a.e., and}
\\
|v_{K_\eps}\g_\eps|+|(\tfrac13|v|^2-1)_{K_{\eps}}\g_\eps|\le C(1+|v|^2)
\end{aligned}
$$
one has by the Product Limit Theorem
\begin{equation}
\label{Lmt-uth}
\begin{aligned}
\la v_{K_{\eps_n}}\g_{\eps_n} g_{\eps_n}\ra&\to\la vg\ra=u
\\
\la (\tfrac13|v|^2-1)_{K_{\eps_n}}\g_{\eps_n} g_{\eps_n}\ra
	&\to\la (\tfrac13|v|^2-1)g\ra=\th
\end{aligned}
\end{equation}
in $\wL^1_{loc}(dtdx)$. In fact, these limits also hold in $\wL^2_{loc}(dtdx)$ since the family
$g_\eps\g_\eps$ is bounded in $L^\infty_t(L^2(Mdvdx))$.

\bigskip
Taking limits in (\ref{FcA->}) and (\ref{FcB->}), which are quadratic functions of the moments,  
requires then to establish some strong compactness on $(\la \zeta_{K_\eps} g_\eps \g_\eps \ra)$.

\subsubsection{Strong compactness in the $x$-variable}

Applying Proposition \ref{CPTX-PR} with $\xi=v$ and $\xi=\tfrac12(|v|^2-5)$ shows that, 
for each $T>0$ and each compact $K\subset \bR^3$,
$$
\begin{aligned}
\int_0^T\int_K|\la\tfrac12(|v|^2-5)g_{\eps_n}\g_{\eps_n}\ra(t,x+y)
    -\la\tfrac12(|v|^2-5)g_{\eps_n}\g_{\eps_n}\ra(t,x)|^2dxdt
\\
+\int_0^T\int_K|\la vg_{\eps_n}\g_{\eps_n}\ra(t,x+y)
    -\la vg_{\eps_n}\g_{\eps_n}\ra(t,x)|^2dxdt
    \to 0
\end{aligned}
$$
as $|y|\to 0$ uniformly in $n$. An easy consequence of the above convergence properties
is that
\begin{equation}
\label{cptx2}
\begin{aligned}
\int_0^T\int_K|\la\tfrac12(|v|^2-5)_{K_{\eps_n}}g_{\eps_n}\g_{\eps_n}\ra(t,x+y)
    -\la\tfrac12(|v|^2-5)_{K_{\eps_n}}g_{\eps_n}\g_{\eps_n}\ra(t,x)|^2dxdt
\\
+\int_0^T\int_K|\la v_{K_{\eps_n}}g_{\eps_n}\g_{\eps_n}\ra(t,x+y)
    -\la v_{K_{\eps_n}}g_{\eps_n}\g_{\eps_n}\ra(t,x)|^2dxdt
    \to 0
\end{aligned}
\end{equation}
as $|y|\to 0$ uniformly in $n$.

\bigskip
In order to study the convergence of $\bF_\eps(A)$, we need some similar statements 
for the solenoidal and gradient parts of $\la v_{K_{\eps_n}}g_{\eps_n}\g_{\eps_n}\ra$, 
since the first one is expected to converge strongly in $L^2_{loc}(dtdx)$.

The difficulty comes then from the fact that the Leray projection is a non local 
pseudodifferential operator, in particular it is not continuous on $L^2_{loc}(dx)$.

Introducing some convenient truncation $\chi$ in $x$ and using the properties of 
the commutator $[\chi,P]$, one can nevertheless prove the following equicontinuity
statement (see Lemma \ref{P-equicontinuity})~: for each compact  $K\subset\bR^3$ 
and each $T>0$, one has 
\begin{equation}
\label{cptx}
\int_0^T\int_K|P\la v_{K_{\eps_n}}g_{\eps_n}\g_{\eps_n}\ra(t,x+y)
    -P\la v_{K_{\eps_n}}g_{\eps_n}\g_{\eps_n}\ra(t,x)|^2dxdt
    \to 0
\end{equation}
as $|y|\to 0$, uniformly in $n$.

\subsubsection{Strong compactness  in the $t$-variable} 

As we shall see below , the temperature fluctuation 
$\la\tfrac12(|v|^2-5)_{K_\eps}g_{\eps_n}\g_{\eps_n}\ra$  and the solenoidal 
part $P\la v_{K_\eps}g_\eps\g_\eps\ra$ of $\la v_{K_\eps}g_\eps\g_\eps\ra$ 
are strongly compact in the $t$-variable. However the orthogonal complement 
of  $P\la v_{K_\eps}g_\eps\g_\eps\ra$--- which is a gradient field --- is not in 
general because of high frequency oscillations in $t$.

\begin{Prop}\label{CPTPU-PR}
Under the assumptions of Theorem \ref{BNSW-TH}, one has
$$
\begin{aligned}
P\la v_{K_{\eps_n}} g_{\eps_n}\g_{\eps_n}\ra&\to\la vg\ra=u
\\
\la\tfrac12(|v|^2-5)_{K_{\eps_n}} g_{\eps_n}\g_{\eps_n}\ra
	&\to\la \tfrac12(|v|^2-5) g\ra=\tfrac52\th
\end{aligned}
$$
in $C(\bR_+;\wL^2_x)$ and in $L^2_{loc}(dtdx)$ as $n\to+\infty$.
\end{Prop}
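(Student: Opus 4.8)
The plan is to derive, for each of the two quantities, an evolution equation from which the singular $O(1/\eps)$ contribution has been removed, so that the time derivative is controlled at order $O(1)$, and then to combine this time regularity with the spatial equicontinuity already established in (\ref{cptx2}) and (\ref{cptx}). First I would write down the relevant local conservation laws. For the temperature, equation (\ref{Mmnt-v2}) is already in the right form,
\[
\d_t\la\tfrac12(|v|^2-5)_{K_\eps}g_\eps\g_\eps\ra=-\Div_x\bF_\eps(B)+\bD_\eps(\tfrac12(|v|^2-5))\,,
\]
with no factor $1/\eps$. For the solenoidal momentum I would apply the Leray projection $P$ to the motion equation (\ref{Mmnt-v}); since $P$ annihilates the gradient field $\grad_x\la\tfrac13|v|^2_{K_\eps}g_\eps\g_\eps\ra$ --- this being exactly the point of projecting, as the acoustic oscillations reside in that gradient part --- one obtains
\[
\d_t P\la v_{K_\eps}g_\eps\g_\eps\ra=-P\Div_x\bF_\eps(A)+P\bD_\eps(v)\,.
\]
By Proposition \ref{DF-PR} the defect terms tend to $0$ in $L^1_{loc}(dtdx)$, while the fluxes $\bF_\eps(A),\bF_\eps(B)$ are bounded in $L^1_{loc}(dtdx)$ by Proposition \ref{ASMP-PR} together with the entropy and entropy production bounds (\ref{Entr-estm2})--(\ref{Entr-prd1}). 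Testing against a smooth compactly supported (and, for the momentum, divergence-free) field $\varphi$, so that the gradient term integrates to zero, I conclude that for each such $\varphi$ the scalar functions $t\mapsto\int\la\tfrac12(|v|^2-5)_{K_\eps}g_\eps\g_\eps\ra\varphi\,dx$ and $t\mapsto\int\la v_{K_\eps}g_\eps\g_\eps\ra\cdot\varphi\,dx$ have derivatives bounded in $L^1_{loc}(dt)$ uniformly in $\eps$, hence are equicontinuous in $t$ uniformly in $\eps$.

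Next I would obtain the convergence in $C(\bR_+;\wL^2_x)$. The bound $\frac{\sqrt{G_\eps}-1}\eps=O(1)_{L^\infty_t(L^2(dxMdv))}$ from Proposition \ref{fluct-control}(b) shows both quantities are bounded in $L^\infty_t(L^2_x)$; combined with the uniform equicontinuity in $t$ above, Arzel\`a-Ascoli yields, for each fixed test field $\varphi$, uniform convergence on compact time intervals of $t\mapsto\int(\cdot)\cdot\varphi\,dx$. The limits are pinned down by the weak convergences (\ref{Lmt-uth}), which force the $C$-limit to equal $\int u\cdot\varphi\,dx$, resp. $\tfrac52\int\th\,\varphi\,dx$, for almost every $t$ and hence, by continuity, for every $t$. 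Passing from a dense set of test fields to all of $L^2_x$ by the uniform $L^\infty_t(L^2_x)$ bound gives convergence in $C(\bR_+;\wL^2_x)$ toward $u$ and $\tfrac52\th$.

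To upgrade to strong convergence in $L^2_{loc}(dtdx)$ I would combine the spatial equicontinuity with the time regularity. Fix $T>0$ and a compact $K\subset\bR^3$, and let $w_{\eps_n}$ denote either quantity. The spatial equicontinuity --- (\ref{cptx2}) for the temperature and (\ref{cptx}) for the solenoidal momentum, the latter being precisely where the nonlocality of $P$ was absorbed through Lemma \ref{P-equicontinuity} --- together with the uniform $L^\infty_t(L^2_x)$ bound shows, by the Riesz-Fr\'echet-Kolmogorov criterion applied in the $x$-variable, that the family of time slices $\{w_{\eps_n}(t,\cdot)\}_{t\ge0,\,n}$ is relatively compact in $L^2(K)$. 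On a norm-compact subset of the Hilbert space $L^2(K)$ the weak and strong topologies coincide; since the time-derivative control from the first step makes $t\mapsto w_{\eps_n}(t)$ equicontinuous for the weak $L^2(K)$ topology, it is in fact equicontinuous for the strong topology, uniformly in $n$. Arzel\`a-Ascoli with values in this compact metric space then gives relative compactness of $(w_{\eps_n})$ in $C([0,T];L^2(K))$, and the strong limit must coincide with the weak limit already identified. This yields strong convergence in $L^2([0,T]\times K)$, hence in $L^2_{loc}(dtdx)$.

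The principal obstacle is structural rather than computational: the time derivatives of both quantities are only controlled in a space of negative regularity in $x$ (each equals a spatial divergence of an $L^1_{loc}$ flux plus a vanishing defect), so one cannot obtain $t$-equicontinuity directly in $L^2$ and invoke a naive Riesz-Kolmogorov argument jointly in $(t,x)$. The device that circumvents this is the coincidence of weak and strong topologies on the $x$-compact set in which the time slices live, which promotes the weak time-equicontinuity to strong time-equicontinuity. The only other genuine difficulty --- the spatial equicontinuity of the solenoidal part, where the nonlocal Leray projection fails to commute with spatial localization --- has already been dealt with in (\ref{cptx}) by the commutator estimate of Lemma \ref{P-equicontinuity}; granting that input, the remaining argument is a fairly standard assembly of compactness in $x$ and regularity in $t$.
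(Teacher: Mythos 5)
Your first two steps coincide with the paper's proof: the conservation laws (\ref{Mmnt-Pv->0})--(\ref{Mmnt-v2->0}), with $P$ (equivalently, testing against solenoidal fields) removing the singular pressure gradient, the $L^1_{loc}(dt)$ bound on the time derivatives via the $L^1_{loc}(dtdx)$ bound on $\bF_\eps(A)$, $\bF_\eps(B)$, the uniform $L^\infty_t(L^2_x)$ bound from (\ref{Entr-estm2}) and (\ref{gflt<sqrG}), and an Ascoli-type argument giving relative compactness in $C(\bR_+;\wL^2_x)$ with limits identified by (\ref{Lmt-uth}). The gap is in your third step. The spatial equicontinuity statements (\ref{cptx2}) and (\ref{cptx}) are \emph{integrated in time}: they control $\int_0^T\int_K|w_{\eps_n}(t,x+y)-w_{\eps_n}(t,x)|^2\,dxdt$, not $\sup_t\int_K|w_{\eps_n}(t,x+y)-w_{\eps_n}(t,x)|^2\,dx$ (they come from the velocity averaging Theorem \ref{B2-thm}, whose conclusion is an $L^2([0,T]\times K)$ statement). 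So Riesz--Fr\'echet--Kolmogorov applied slice-wise does not show that the family $\{w_{\eps_n}(t,\cdot)\}_{t,n}$ is relatively compact in $L^2(K)$: translation continuity may fail on a thin set of times while the time-averaged estimate still holds. Without that slice compactness, the ``weak and strong topologies coincide on a norm-compact set'' device never gets off the ground. Moreover your final conclusion --- relative compactness in $C([0,T];L^2(K))$ \emph{strong} --- overshoots what is true: the initial data are controlled only through the relative entropy, so at $t=0$ the moments may oscillate in $x$ and converge only weakly in $L^2_x$; strong convergence in $C([0,T];L^2(K))$ would force strong convergence of the slices at $t=0$, which is not available. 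This is exactly why the proposition asserts $C(\bR_+;\wL^2_x)$ convergence together with $L^2_{loc}(dtdx)$ convergence, not $C_t(L^2_x)$ convergence.

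The paper closes this step differently, and the repair is instructive. Mollify in $x$: for each fixed $\de$, the compactness in $C(\bR_+;\wL^2_x)$ upgrades, after convolution with $\chi_\de$, to relative compactness of $w_{\eps_n}\star\chi_\de$ in $L^2_{loc}(dtdx)$, so that the products $w_{\eps_n}\cdot(w_{\eps_n}\star\chi_\de)$ converge in $\wL^1_{loc}(dtdx)$ to $w\cdot(w\star\chi_\de)$. Then the time-integrated equicontinuity (\ref{cptx2})--(\ref{cptx}) --- which is uniform in $n$, and this is all it is needed for --- shows precisely that $w_{\eps_n}\star\chi_\de\to w_{\eps_n}$ in $L^2_{loc}(dtdx)$ uniformly in $n$ as $\de\to 0$, whence $|w_{\eps_n}|^2\to|w|^2$ in $\wL^1_{loc}(dtdx)$; weak $L^2_{loc}$ convergence plus convergence of the local $L^2$ norms then gives the strong $L^2_{loc}(dtdx)$ convergence. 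In short: your inputs are the right ones, but they must be assembled through an averaged-in-time (Aubin--Lions-type) mechanism, not through compactness of individual time slices.
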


\begin{proof}

The conservation law (\ref{Mmnt-Pv->0}) implies that
\begin{equation}
\label{CptTPu}
\d_t\int_{\bR^3}\la v_{K_{\eps_n}}g_{\eps_n}\g_{\eps_n}\ra\cdot U dx=O(1)
\hbox{ in }L^1_{loc}(dt)
\end{equation}
for each compactly supported, solenoidal vector field $U\in C^\infty(\bR^3)$, since we know 
from Proposition \ref{ASMP-PR} together with the bounds (\ref{Entr-prd1}) and (\ref{Entr-estm2}) 
that $\bF_{\eps_n}(A)$ is bounded in $L^1_{loc}(dtdx)$.  

In the same way, the conservation law (\ref{Mmnt-v2->0}) implies that
\begin{equation}
\label{CptTth}
\d_t\la\tfrac12(|v|^2-5)_{K_{\eps_n}}g_{\eps_n}\g_{\eps_n}\ra=O(1)
\hbox{ in }L^1_{loc}(dt;W^{-1,1}_{loc}(\bR^3))\,.
\end{equation}

Also, we recall that $g_\eps\g_\eps$ is bounded in $B(\bR_+;L^2(Mdvdx))$ --- 
where $B(X,Y)$ denotes the class of bounded maps from $X$ to $Y$ --- because 
of the entropy bound (\ref{Entr-estm2}). Indeed, since $\g_\eps=0$ whenever $G_\eps>2$, 
one has
\begin{equation}
\label{gflt<sqrG}
|g_{\eps_n}\g_{\eps_n}|
\le\indc_{G_\eps\le 2}\frac{|G_\eps-1|}{\eps}\le(1+\sqrt2)\frac{|\sqrt{G_{\eps_n}}-1|}{\eps_n}\,.
\end{equation}
In particular, one has
\begin{equation}
\label{Bdgflt}
\begin{aligned}
\la v_{K_{\eps_n}}g_{\eps_n}\g_{\eps_n}\ra=O(1)\hbox{ in }B(\bR_+;L^2_x)\,,\\
\la\tfrac12(|v|^2-5)_{K_{\eps_n}}g_{\eps_n}\g_{\eps_n}\ra=O(1)
\hbox{ in }B(\bR_+;L^2_x)\,,
\end{aligned}
\end{equation}

\bigskip
Since the class of $C^\infty$, compactly supported solenoidal vector fields is dense in that of all
$L^2$ solenoidal vector fields (see Appendix A of \cite{Li2}), (\ref{Bdgflt}) and
(\ref{CptTPu}) imply that
\begin{equation}
\label{CptCwL2}
P\la v_{K_{\eps_n}}g_{\eps_n}\g_{\eps_n}\ra
\hbox{ is relatively compact in }C(\bR_+;\wL^2(\bR^3))\,,
\end{equation}
by a variant of Ascoli's theorem that can be found in Appendix C of \cite{Li2}.

The same argument shows that
\begin{equation}
\label{CptCwL22}
\la\tfrac12(|v|^2-5)_{K_{\eps_n}}g_{\eps_n}\g_{\eps_n}\ra 
	\hbox{  is also relatively compact in }C(\bR_+;\wL^2_x)\,.
\end{equation}

\bigskip
As for the $L^2_{loc}(dtdx)$ compactness, notice (\ref{CptCwL2})-(\ref{CptCwL22}) 
imply that
$$
\begin{aligned}
P\la v_{K_{\eps_n}}g_{\eps_n}\g_{\eps_n}\ra\star\chi_\de
	\hbox{ is relatively compact in }L^2_{loc}(dtdx)
\\
\la\tfrac12(|v|^2-5)_{K_{\eps_n}}g_{\eps_n}\g_{\eps_n}\ra\star \chi_\de
	\hbox{ is relatively compact in }L^2_{loc}(dtdx)\\
\end{aligned}
$$
where $\chi_\de$ designates any mollifying sequence and $\star$ is the convolution 
in the $x$-variable only. Hence
$$
\begin{aligned}
P\la v_{K_{\eps_n}}g_{\eps_n}\g_{\eps_n}\ra
\cdot P\la v_{K_{\eps_n}}g_{\eps_n}\g_{\eps_n}\ra\star\chi_\de
	&\to Pu\cdot Pu\star\chi_\de
\\
\la\tfrac12(|v|^2-5)_{K_{\eps_n}}g_{\eps_n}\g_{\eps_n}\ra 
	\la\tfrac12(|v|^2-5)_{K_{\eps_n}}g_{\eps_n}\g_{\eps_n}\ra\star \chi_\de 
		&\to\left(\tfrac52 \theta\right)\left(\tfrac52\theta\star \chi_\de\right)
\end{aligned}
$$
in $\wL^1_{loc}(dtdx)$ as $n\to \infty$. By (\ref{cptx2})-(\ref{cptx}),
\begin{equation}
\label{UnifConvxPu-th}
\begin{aligned}
P\la v_{K_{\eps_n}}g_{\eps_n}\g_{\eps_n}\ra\star\chi_\de
	&\to P\la v_{K_{\eps_n}}g_{\eps_n}\g_{\eps_n}\ra
\\
\la\tfrac12(|v|^2-5)_{K_{\eps_n}}g_{\eps_n}\g_{\eps_n}\ra\star\chi_\de 
&\to\la\tfrac12(|v|^2-5)_{K_{\eps_n}}g_{\eps_n}\g_{\eps_n}\ra
\end{aligned}
\end{equation}
in $L^2_{loc}(dtdx)$ uniformly in $n$ as $\de\to 0$. With this, we conclude that
$$
\begin{aligned}
|P\la v_{K_{\eps_n}}g_{\eps_n}\g_{\eps_n}\ra|^2
	&\to|Pu|^2\hbox{ in }\wL^1_{loc}(dtdx)
\\
|\la\tfrac12(|v|^2-5)_{K_{\eps_n}}g_{\eps_n}\g_{\eps_n}\ra|^2
	&\to \left( \frac52 \theta\right)^2\hbox{ in }\wL^1_{loc}(dtdx)
\end{aligned}
$$
which implies the expected strong compactness in $L^2_{loc}(dtdx)$.
\end{proof}

\subsubsection{Passing to the limit in the convection terms}

As explained above, $P\la v_{K_{\eps_n}}g_{\eps_n}\g_{\eps_n}\ra$ is strongly 
relatively compact in $L^2_{loc}(dtdx)$; however, the term 
$\la v_{K_{\eps_n}}g_{\eps_n}\g_{\eps_n}\ra$ itself may not be strongly relatively 
compact in $L^2_{loc}(dtdx)$ --- at least in general. For that reason, on account 
of (\ref{FcA->}), it is not clear that 
$$
\bFc(A)\to u\otimes u-\tfrac13|u|^2I\,.
$$ 

Likewise $\la(|v|^2-5)_{K_{\eps_n}}g_{\eps_n}\g_{\eps_n}\ra$ is strongly relatively 
compact in $L^2_{loc}(dtdx)$, and, on account of (\ref{FcB->}), it is not clear that
$$
\bFc(B)\to \tfrac52 u\th
$$ 
as one would expect. 

What we shall prove in this section is 

\begin{Prop}\label{DVCNV-PR}
Under the assumptions of Theorem \ref{BNSW-TH}, one has
$$
\int_{\bR^3}\grad_xU:\bFcn(A)dx\to\int_{\bR^3}\grad_xU:u\otimes udx
$$
in the sense of distributions on $\bR_+^*$ for each solenoidal vector field
$U\in C^\infty_c(\bR^3;\bR^3)$,  and
$$
\Div_x\bFcn(B)\to\tfrac52\Div_x(u\th)
$$
in the sense of distributions on $\bR_+^*\times\bR^3$.
\end{Prop}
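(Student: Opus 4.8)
The plan is to reduce both limits to the strong $L^2_{loc}(dtdx)$ compactness already secured in Proposition~\ref{CPTPU-PR} for the solenoidal velocity moment and for the temperature moment, isolating the acoustic (gradient) part of the velocity as the only genuinely new difficulty. Writing $w_{\eps_n}=\la v_{K_{\eps_n}}g_{\eps_n}\g_{\eps_n}\ra$, the decompositions (\ref{FcA->}) and (\ref{FcB->}) give, modulo $o(1)_{L^1_{loc}(dtdx)}$, that $\bFcn(A)=w_{\eps_n}\otimes w_{\eps_n}-\tfrac13|w_{\eps_n}|^2I$ and $\bFcn(B)=\tfrac52\la(\tfrac13|v|^2-1)_{K_{\eps_n}}g_{\eps_n}\g_{\eps_n}\ra\,w_{\eps_n}$. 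First I would observe that the isotropic term $-\tfrac13|w_{\eps_n}|^2I$ contributes $-\tfrac13\int|w_{\eps_n}|^2\Div_xU\,dx=0$ against any divergence-free $U$, so it drops out of the momentum statement; this is precisely why that equation is tested against solenoidal fields. Then I would split $w_{\eps_n}=Pw_{\eps_n}+(I-P)w_{\eps_n}$, the second summand being a gradient field $\grad_x\psi_{\eps_n}$.

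For the \emph{slow} contribution, Proposition~\ref{CPTPU-PR} yields $Pw_{\eps_n}\to u$ and the entropic mode $\la\tfrac12(|v|^2-5)_{K_{\eps_n}}g_{\eps_n}\g_{\eps_n}\ra\to\tfrac52\th$ strongly in $L^2_{loc}(dtdx)$; combined with the Boussinesq relation $\rho+\th=0$ from (\ref{g=}), this identifies the slow part of $\la(\tfrac13|v|^2-1)_{K_{\eps_n}}g_{\eps_n}\g_{\eps_n}\ra$ as $\th$. Hence $Pw_{\eps_n}\otimes Pw_{\eps_n}\to u\otimes u$ and the slow part of $\bFcn(B)\to\tfrac52\th u$ in $L^1_{loc}(dtdx)$, which reproduces exactly the advertised limits $\int\grad_xU:u\otimes u\,dx$ and $\tfrac52\Div_x(u\th)$.

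The crux is to kill every quadratic term containing $\grad_x\psi_{\eps_n}$ or the acoustic part of the density. The gradient velocity, together with the acoustic scalar (the $\rho+\th$ mode), solves to leading order an acoustic wave system with propagation speed $O(1/\eps)$ whose sources are the conservation defects, controlled by Proposition~\ref{DF-PR}, and the diffusion fluxes, bounded by entropy production. My plan is to exploit the dispersion of these fast waves in $\bR^3$: a Strichartz-type estimate (the genuinely $\bR^3$-specific ingredient announced in the Introduction and carried out in Appendix~C) should give $\grad_x\psi_{\eps_n}\to0$ in $L^2_t(L^6_{x,loc})$, whence $\grad_x\psi_{\eps_n}\to0$ in $L^2_{loc}(dtdx)$ on compacts by H\"older. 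Together with the $L^\infty_t(L^2_x)$ bound (\ref{Bdgflt}) this forces the cross terms $Pw_{\eps_n}\otimes\grad_x\psi_{\eps_n}$ and $\grad_x\psi_{\eps_n}\otimes Pw_{\eps_n}$, the self-interaction $\grad_x\psi_{\eps_n}\otimes\grad_x\psi_{\eps_n}$, and the acoustic contributions to $\la(\tfrac13|v|^2-1)_{K_{\eps_n}}g_{\eps_n}\g_{\eps_n}\ra\,w_{\eps_n}$ to vanish in $L^1_{loc}(dtdx)$ (the weak--strong products of the oscillating density against fixed slow fields vanishing automatically). Should the dispersion only yield vanishing modulo a persistent gradient-valued residual in the self-interaction, I would instead use $\Div_x(\grad_x\psi\otimes\grad_x\psi)=(\Delta\psi)\grad_x\psi+\tfrac12\grad_x|\grad_x\psi|^2$ to show that residual is a gradient, hence annihilated against the divergence-free $U$.

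I expect the acoustic dispersion step to be the main obstacle. One must first pin down the exact wave-equation structure of the fast modes with quantitative control of all $O(1)$ and $o(1)$ source terms, then establish the $\bR^3$ dispersive decay while handling the truncation at spatial infinity and the non-locality of the Leray projection, i.e. the $[\chi,P]$ commutator already met in (\ref{cptx}). This dispersive mechanism is what replaces, over $\bR^3$, the compensated-compactness and filtering of acoustic resonances used on the torus.
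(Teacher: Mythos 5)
Your framing matches the paper up to the decisive step: the reduction via (\ref{FcA->})--(\ref{FcB->}), the observation that the isotropic part $-\tfrac13|w_{\eps_n}|^2I$ is annihilated by solenoidal test fields, the strong $L^2_{loc}$ convergence of $P\la v_{K_{\eps_n}}g_{\eps_n}\g_{\eps_n}\ra$ and of the temperature moment from Proposition \ref{CPTPU-PR}, and the weak--strong treatment of the cross terms are all as in the paper. The genuine gap is your treatment of the acoustic part: you propose to prove that $Q\la v_{K_{\eps_n}}g_{\eps_n}\g_{\eps_n}\ra=\grad_x\psi_{\eps_n}$ vanishes \emph{strongly} in $L^2_{loc}(dtdx)$ by Strichartz dispersion for the fast wave system. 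This step cannot be run with the a priori bounds available here. Strichartz inequalities are global in space, while the sources of the acoustic system are only known to vanish locally: by Lemma \ref{LM-RHS}, $\eps\bF_\eps(A)$ is merely $O(1)_{L^\infty_t(L^2_x)}$ globally, and $\eps\bD_\eps(v)$ contains a piece that is only $O(1)_{L^2_{t,x}}$, with convergence to $0$ holding in $L^1_{loc}(dtdx)$ only. Since the propagation speed is $O(1/\eps)$, order-one sources located at distance $R$ reach any fixed compact set after time $O(\eps R)$, so local smallness of the sources gives no local dispersive decay; this is precisely why the paper uses an argument needing only local information. (Incidentally, $L^2_t L^6_x$ is not a wave-admissible Strichartz pair in three dimensions, and Appendix C is not about dispersion at all: it supplies the $[\chi,P]$ commutator equicontinuity of Lemma \ref{P-equicontinuity} and the $L^1_{loc}$ statement of Lemma \ref{LM-Q} needed because $P$ and $Q$ are nonlocal.)

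Your fallback does not repair this, because in the identity $\Div_x(\grad_x\psi\otimes\grad_x\psi)=(\Dlt_x\psi)\grad_x\psi+\tfrac12\grad_x|\grad_x\psi|^2$ the term $(\Dlt_x\psi)\grad_x\psi$ is \emph{not} a gradient. To kill it one must substitute the wave system itself: $\Dlt_x\psi_\eps=\eps\d_t\varphi_\eps-F_\eps$ and $\eps\d_t\grad_x\psi_\eps=-c^2\grad_x\varphi_\eps+G_\eps$ turn $(\Dlt_x\psi_\eps)\grad_x\psi_\eps$ into $\eps\d_t(\varphi_\eps\grad_x\psi_\eps)$ plus the gradient $\tfrac{c^2}2\grad_x\varphi_\eps^2$ plus vanishing sources, and the $\eps\d_t$ term dies in distributions by the uniform $L^2$ bounds. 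That computation is exactly the Lions--Masmoudi compensated-compactness Theorem \ref{A2-thm}, which the paper applies to the \emph{mollified} moments in Lemma \ref{CMPCPT-LM} (the mollification $\l_\de\star$ being forced by the failure of $Q$ to act on local spaces, with Lemmas \ref{LM-RHS} and \ref{LM-Q} controlling the right-hand sides), and then removes the mollifier uniformly in $n$ via the $x$-equicontinuity (\ref{cptx2})--(\ref{cptx}). In other words, what you set aside as a dispensable alternative is the required argument: no strong compactness of the acoustic modes is proved or needed, only the vanishing of the two specific quadratic combinations $P\Div_x((\grad_x\psi_\eps)^{\otimes 2})$ and $\Div_x(\varphi_\eps\grad_x\psi_\eps)$. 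Accordingly, your closing claim is inverted --- the paper's mechanism is the same compensated compactness as on the torus, and the genuinely $\bR^3$-specific work concerns truncations at infinity and the nonlocality of the Leray projection, not acoustic dispersion.
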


The proof of this result relies on a compensated compactness argument due to 
P.-L. Lions and N. Masmoudi \cite{LiMa1} and recalled in Appendix A (Theorem A.2), 
and on the following observation:

\begin{Lem}\label{CMPCPT-LM}
Let $\de>0$, and $\xi\in C^\infty_c(\bR^3)$ be a bump function such that
$$
\Supp\xi\subset B(0,1)\,,\quad\xi\ge 0\,,\hbox{ and }\int\xi dx=1\,;
$$
let $\xi_\de(x)=\de^{-3}\chi(x/\de)$ and $\l_\de=\xi_\de\star\xi_\de\star\xi_\de$. Denote 
by $Q=I-P$  the orthogonal projection on gradient fields in $L^2(\bR^3;\bR^3)$. Under 
the assumptions of Theorem \ref{BNSW-TH}, one has
$$
\begin{aligned}
\eps\d_tQ(\l_\de\star\la v_{K_{\eps_n}}g_{\eps_n}\g_{\eps_n}\ra)
+
\grad_x\l_\de\star\la\tfrac13|v|^2_{K_{\eps_n}}g_{\eps_n}\g_{\eps_n}\ra&\to 0
\\
\eps\d_t\l_\de\star\la\tfrac13|v|^2_{K_{\eps_n}}g_{\eps_n}\g_{\eps_n}\ra
+
\tfrac53\Div_xQ(\l_\de\star\la v_{K_{\eps_n}}g_{\eps_n}\g_{\eps_n}\ra)&\to 0
\end{aligned}
$$
in $L^1_{loc}(dt;H^s_{loc}(\bR^3))$ for each $s>0$.
\end{Lem}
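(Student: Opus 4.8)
The plan is to read off the two statements from the momentum and energy moments of the renormalized kinetic equation, after rescaling time by a factor $\eps$ and regularizing in $x$ by convolution with $\l_\de$. Starting from the moment identity (\ref{Mmnt-xi}) with $\xi=v$ and splitting $v\otimes v=A+\tfrac13|v|^2I$, I would first record the momentum balance
$$\d_t\la v_{K_\eps}g_\eps\g_\eps\ra+\Div_x\bF_\eps(A)+\tfrac1\eps\grad_x\la\tfrac13|v|^2_{K_\eps}g_\eps\g_\eps\ra=\bD_\eps(v),$$
and, taking $\xi=\tfrac13|v|^2$ and writing $\tfrac13v|v|^2=\tfrac23B+\tfrac53v$, the energy balance
$$\d_t\la\tfrac13|v|^2_{K_\eps}g_\eps\g_\eps\ra+\tfrac23\Div_x\bF_\eps(B)+\tfrac53\tfrac1\eps\Div_x\la v_{K_\eps}g_\eps\g_\eps\ra=\bD_\eps(\tfrac13|v|^2).$$
Multiplying each identity by $\eps$ converts the singular operators $\tfrac1\eps\grad_x$ and $\tfrac1\eps\Div_x$ into the acoustic coupling terms appearing in the Lemma, leaving only the flux contributions $\eps\Div_x\bF_\eps(\cdot)$ and the defect contributions $\eps\bD_\eps(\cdot)$ to be discarded.

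The core mechanism is that these flux and defect contributions are negligible after multiplication by $\eps$ and smoothing by $\l_\de$. Convolution in $x$ with the fixed function $\l_\de\in C^\infty_c$ gains arbitrarily many $x$-derivatives, and $\l_\de\star\Div_x(\cdot)=(\grad_x\l_\de)\star(\cdot)$ costs only one extra derivative of $\l_\de$; thus it carries any family bounded in $L^1_{loc}(dtdx)$ into one bounded in $L^1_{loc}(dt;H^s_{loc})$ for every $s$, and it sends $L^1_{loc}(dtdx)$-convergence to $0$ into $L^1_{loc}(dt;H^s_{loc})$-convergence to $0$. Since $\bD_\eps(v)$ and $\bD_\eps(\tfrac13|v|^2)\to 0$ in $L^1_{loc}(dtdx)$ by Proposition \ref{DF-PR}, while $\bF_\eps(A)$ and $\bF_\eps(B)$ are bounded in $L^1_{loc}(dtdx)$ by Proposition \ref{ASMP-PR} together with the bounds (\ref{Entr-estm2}) and (\ref{Entr-prd1}), all the terms $\eps\,\l_\de\star\bD_\eps(\cdot)$ and $\eps\,\l_\de\star\Div_x\bF_\eps(\cdot)$ tend to $0$ in $L^1_{loc}(dt;H^s_{loc})$.

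This already settles the energy statement, for which no projection difficulty arises: since $P$ maps into divergence-free fields one has $\Div_xQ=\Div_x$, so that $\tfrac53\Div_xQ(\l_\de\star\la v_{K_\eps}g_\eps\g_\eps\ra)=\tfrac53\l_\de\star\Div_x\la v_{K_\eps}g_\eps\g_\eps\ra$ is a purely local operation, and the convolved energy balance is exactly the second displayed limit. For the momentum statement, applying $Q=I-P$ to the convolved $\eps$-scaled balance and using $Q\grad_x=\grad_x$ shows that the first displayed combination equals $\eps Q(\l_\de\star\bD_\eps(v))-\eps Q(\l_\de\star\Div_x\bF_\eps(A))$. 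The flux term remains harmless: Proposition \ref{ASMP-PR} with the entropy and dissipation bounds gives $\bF_\eps(A)$ bounded globally in $L^\infty_t(L^1_x)+L^2_{t,x}$, so that (using the smoothing of $\l_\de$ and Sobolev embedding to absorb the $L^1_x$ part into $H^s(\bR^3)$) the family $\l_\de\star\Div_x\bF_\eps(A)$ lies in a fixed ball of $L^1_{loc}(dt;H^s(\bR^3))$, on which the Fourier multiplier $Q$ is bounded, and the $\eps$ prefactor kills it.

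The genuine obstacle is the defect term $\eps\,Q(\l_\de\star\bD_\eps(v))$. Proposition \ref{DF-PR} controls $\bD_\eps(v)$ only locally in $x$, whereas $Q$ is the nonlocal Leray projection, bounded on $H^s(\bR^3)$ but not on $H^s_{loc}$, so its far-field tail cannot be removed by the $x$-mollification alone. I expect this to be the main difficulty, and would treat it with the truncation-and-commutator machinery the paper isolates in Appendix C (Lemma \ref{P-equicontinuity}): writing $\l_\de\star\bD_\eps(v)=\chi\,\l_\de\star\bD_\eps(v)+(1-\chi)\l_\de\star\bD_\eps(v)$ for a cutoff $\chi\equiv 1$ near the compact set in question, the first, compactly supported, piece converges to $0$ in the global space $H^s(\bR^3)$ so that $Q$ of it is controlled, while the effect of $Q$ on the far piece over the region of interest is estimated through the commutator $[\chi,P]$, the prefactor $\eps$ supplying the required smallness. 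Assembling the two convolved balances then yields both limits in $L^1_{loc}(dt;H^s_{loc})$ for every $s>0$.
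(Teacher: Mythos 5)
Your skeleton coincides with the paper's up to the point where the real work begins: you start from the same two moment identities, you correctly observe that the energy statement is immediate because $\Div_xQ=\Div_x$ so that no nonlocal operator actually acts there, and you correctly identify the application of $Q$ to the momentum balance as the genuine obstacle. The energy half of your argument is sound. The momentum half, however, has a real gap. First, Proposition \ref{ASMP-PR} does not yield the global bound you claim for $\bF_\eps(A)$ in $L^\infty_t(L^1_x)+L^2_{t,x}$: the remainder in that proposition is only $o(1)_{L^1_{loc}(dtdx)}$, with no control whatsoever near $x=\infty$, so your assertion that $\l_\de\star\Div_x\bF_\eps(A)$ lies in a fixed ball of $L^1_{loc}(dt;H^s(\bR^3))$ --- a space \emph{global} in $x$, which is what boundedness of the multiplier $Q$ requires --- is unsupported. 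The paper instead uses the elementary global bound $\eps\bF_\eps(A)=\la A_{K_\eps}g_\eps\g_\eps\ra=O(1)_{L^\infty_t(L^2_x)}$; note this is $O(1)$, not $O(\eps)$, so even with a correct global bound your mechanism ``the $\eps$ prefactor kills it'' would not apply.

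The same misdiagnosis undoes your treatment of the defect term, which is where the analytic heart of the paper's proof lies. The global estimate of Lemma \ref{LM-RHS}, namely $\eps\bD_\eps(v)=O(\eps^2K_\eps^{1/2})_{L^1_{t,x}}+O(\sqrt{\eps})_{L^2_t(L^1_x)}+O(1)_{L^2_{t,x}}$, contains a component that is only $O(1)$ globally in $L^2_{t,x}$, so the prefactor $\eps$ cannot ``supply the required smallness'' for the far-field tail of $Q$ applied to it; moreover this global estimate is itself a nontrivial missing ingredient of your proposal (its proof requires splitting $\eps\bD_\eps(v)$ into three pieces, the $L^\infty$ bound on $\hat\g_\eps\sqrt{G_\eps}$, the entropy production bound, and a Young inequality exploiting the superquadratic growth $h^*(\l p)\le\l^2h^*(p)$). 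What actually produces the local vanishing of $Q(\zeta_\de\star\,\cdot\,)$ in the paper is Lemma \ref{LM-Q}: the kernel of $Q=\grad_x\Dlt_x^{-1}\Div_x$ decays at infinity, and splitting it at radius $R$ pairs the global $O(1)$ bound (far part, of size $O(1/\sqrt{R})$) with the \emph{local} $L^1$ convergence to $0$ coming from Propositions \ref{DF-PR} and \ref{ASMP-PR} (near part). Your proposed substitute, the commutator $[\chi,P]$ of Lemma \ref{P-equicontinuity}, presupposes exactly such a uniform global $L^2$-type bound and was designed for an equicontinuity statement on $P$, so it cannot replace either the global estimates of Lemma \ref{LM-RHS} or the kernel-decay argument of Lemma \ref{LM-Q}.
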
 

\begin{proof}
The second convergence statement above is obvious: indeed, considering the truncated,
renormalized energy equation (\ref{Mmnt-xi}) with $\xi=\tfrac13|v|^2$, and applying the 
mollifier $\l_\de$ leads to
$$
\begin{aligned}
\eps\d_t\l_\de\star\la\tfrac13|v|^2_{K_{\eps_n}}g_{\eps_n}\g_{\eps_n}\ra
+
\tfrac53\Div_xQ(\l_\de\star\la v_{K_{\eps_n}}g_{\eps_n}\g_{\eps_n}\ra)&
\\
=-\tfrac23\eps\Div_x\l_\de\star\bF_\eps(B)+\tfrac13\eps\l_\de\star\bD_\eps(|v|^2)&\,.
\end{aligned}
$$
It follows from Proposition \ref{ASMP-PR}, the entropy bound (\ref{Entr-estm2}) 
and the entropy production estimate (\ref{Entr-prd1}) that $\bF_\eps(B)$ is bounded 
in $L^1_{loc}(dtdx)$; this and Proposition \ref{DF-PR} eventually entail that the 
right-hand side of the above equality vanishes in $L^1_{loc}(dt;H^s_{loc}(\bR^3))$.

\bigskip
The first convergence statement above is much trickier. Start from the analogous truncated, 
renormalized momentum equation (\ref{Mmnt-xi}) with $\xi=v$:
\begin{equation}
\label{epsMmnt-v}
\begin{aligned}
\eps\d_t\la v_{K_{\eps_n}}g_{\eps_n}\g_{\eps_n}\ra
+
\grad_x\tfrac1{3\eps}\la|v|^2_{K_{\eps_n}}g_{\eps_n}\g_{\eps_n}\ra
=-\eps\Div_x\bF_\eps(A)+\eps\bD_\eps(v)&
\end{aligned}
\end{equation}
Applying $Q$ to both sides of the equality above is not obvious, because we only 
know that the right hand side vanishes in $L^1_{loc}(dt;W^{-1,1}_{loc}(\bR^3))$, 
while $Q$ is known to be continuous on { global} Sobolev spaces only.

However, $Q=\grad_x\Dlt_x^{-1}\Div_x$ is a singular integral operator whose 
integral kernel decays at infinity. More precisely, we shall use Lemma \ref{LM-Q} 
together with the following estimates on the right hand side of (\ref{epsMmnt-v})~:

\begin{Lem}\label{LM-RHS}
One has
$$
\eps \bF_\eps (A) \to 0 \hbox{ and }\eps \bD_\eps(v)\to 0\hbox{ in }L^1_{loc}(dtdx)
$$
as $\eps\to 0$. Furthermore,
$$
\begin{aligned}
\eps \bF_\eps(A)=O(1)_{L^\infty_t(L^2_x)}\,,
\\
\eps \bD_\eps(v) =O(\eps^2K_\eps^{1/2})_{L^1_{t,x}}
	+O(\sqrt{\eps})_{L^2_t(L^1_x)}+O(1)_{L^2_{t,x}}\,.
\end{aligned}
$$
\end{Lem}

Note that the convergence statement in Lemma \ref{LM-RHS} is a simple 
consequence of Propositions  \ref{DF-PR} and \ref{ASMP-PR} (already 
used in the derivation of the Boussinesq relation in paragraph \ref{diffusion-cv}).

Then let us postpone the proof of the global estimates, which is based on 
the entropy and entropy production bounds (\ref{Entr-estm2})-(\ref{Entr-prd1}), 
and conclude the proof of Lemma \ref{CMPCPT-LM}.

Define $\zeta_\de=\xi_\de\star\xi_\de$. One has then
$$
\begin{aligned}
\eps\d_tQ(\zeta_\de\star\la v_{K_\eps}g_\eps\g_\eps\ra)
+
\grad_x\zeta_\de\star\la\tfrac13|v|^2_{K_\eps}g_\eps\g_\eps\ra
=
-Q(\xi_\de\star(\eps \bF_\eps(A)\star\grad\xi_\de))
\\
+
Q(\zeta_\de\star(\eps \bD_\eps(v)))\,.
\end{aligned}
$$
For each $\de>0$ fixed,
$$
Q(\xi_\de\star(\eps\bF_\eps(A)\star\grad\xi_\de))\to 0
	\hbox{ in }L^1_{loc}(dtdx)\hbox{ as }\eps\to 0
$$
by the first convergence result in Lemma \ref{LM-RHS} and Lemma \ref{LM-Q}. 

Next decompose 
$$
\eps \bD_\eps(v)=\bD_\eps^0(v)+\bD_\eps'(v)
$$ 
with 
$$
\bD_\eps^0(v)=O(1)_{L^2_{t,x}}\hbox{ and }
\bD_\eps'(v)=O(\eps^2K_\eps^{1/2})_{L^1_{t,x}}+O(\sqrt{\eps})_{L^2_t(L^1_x)}\,.
$$
Thus, one has
$$
\zeta_\de\star \bD_\eps'(v)\to 0\hbox{ in }L^1_{loc}(dt;L^2_x)
$$
so that
$$
Q(\zeta_\de\star\bD_\eps'(v))\to 0\hbox{ in }L^1_{loc}(dt;L^2_x)
$$
as $\eps\to 0$, by the $L^2$-continuity of pseudo-differential operators of order 0. 
Finally, since $\bD_\eps^0(v)\to 0$ in $L^1_{loc}(dtdx)$ and is bounded in $L^2_{t,x}$, 
it follows from Lemma \ref{LM-Q} that
$$
Q(\zeta_\de\star\bD_\eps^0(v))\to 0\hbox{ in }L^1_{loc}(dtdx)\,.
$$

Eventually, we have proved that
$$
\eps\d_tQ(\zeta_\de\star\la v_{K_\eps}g_\eps\g_\eps\ra)
+
\grad_x\zeta_\de\star\la\tfrac13|v|^2_{K_\eps}g_\eps\g_\eps\ra
\to 0
$$
in $L^1_{loc}(dtdx)$ as $\eps\to 0$. Therefore, denoting $\l_\de=\xi_\de\star\xi_\de\star\xi_\de$,
one has
$$
\eps\d_tQ(\l_\de\star\la v_{K_\eps}g_\eps\g_\eps\ra)
+
\grad_x\l_\de\star\la\tfrac13|v|^2_{K_\eps}g_\eps\g_\eps\ra
\to 0
$$
in $L^1_{loc}(dt;H^s_{loc}(\bR^3))$ for each $s>0$ as $\eps\to 0$.
\end{proof}

\bigskip
Let us now turn to the 
\begin{proof}[Proof of Lemma \ref{LM-RHS}]
First, $g_\eps\g_\eps=O(1)$ in $L^\infty_t(L^2(dxMdv))$, while $A\in L^2(Mdv)$: 
hence 
$$
\la A_{K_\eps}g_\eps\g_\eps\ra=O(1)_{L^\infty_t(L^2_x)}\,.
$$

Next decompose $\eps\bD_\eps(v)$ as
$$
\eps\bD_\eps(v)=T_1+T_2+T_3
$$
where
$$
\begin{aligned}
T_1=\LA v_{K_\eps}\hat\g_\eps\frac1{\eps^2}
\left(\sqrt{G'_\eps G'_{\eps 1}}-\sqrt{G_\eps G_{\eps 1}}\right)^2\RA\,,
\\
T_2=2\LA v_{K_\eps}\hat\g_\eps\sqrt{G_\eps}
    \frac1{\eps^2}\left(\sqrt{G'_\eps G'_{\eps 1}}-\sqrt{G_\eps G_{\eps 1}}\right)\RA\,,
\\
T_3=
2\LA v_{K_\eps}\hat\g_\eps\sqrt{G_\eps}\left(\sqrt{G_{\eps 1}}-1\right)
    \frac1{\eps^2}\left(\sqrt{G'_\eps G'_{\eps 1}}-\sqrt{G_\eps G_{\eps 1}}\right)\RA\,,
\end{aligned}
$$

Since $\frac1{\eps^4}\left(\sqrt{G'_\eps G'_{\eps 1}}-\sqrt{G_\eps G_{\eps 1}}\right)^2$ 
is bounded in $L^1_{t,x,\mu}$ (see (\ref{Entr-prd1})) , one has 
$$
T_1=O(\eps^2K_\eps^{1/2})_{L^1_{t,x}}\,.
$$

Likewise, $\hat\g_\eps\sqrt{G_\eps}=O(1)$ in $L^\infty_{t,x,v}$ and $v\in L^2(d\mu)$, 
so that 
$$
T_2=O(1)_{L^2_{t,x}}\,.
$$

The same argument is used for $T_3$, except that one has to control the terms 
$v\left(\sqrt{G_{\eps 1}}-1\right)$ instead of $v$ in $L^2_\mu$. By Young's inequality,
$$
\begin{aligned}
(1+|v_1|)\left(\sqrt{G_{\eps 1 }}-1\right)^2
		&\le (1+|v_1|)\left|G_{\eps 1}-1\right| 
\\
&\leq \frac1\eps\left(h(G_{\eps 1}-1)+h^*\left(\eps (1+|v_1|)\right)\right)
\\
&
\le \frac1\eps h(\eps g_{\eps 1})+\eps h^*(1+|v_1|) 
\\
&=O( \eps )_{L^\infty_t (L^1(M_1dv_1dx))} 
	+ O(\eps )_{L^\infty_{t,x}(L^1(M_1 dv_1))}
\end{aligned}
$$
The 3rd inequality above comes from the superquadratic nature of $h^*$. Indeed
$$
h^*(p)=e^{p}-p-1=\sum_{n\ge 2}\frac{p^n}{n!}
$$
so that
$$
h^*(\l p)\le\l^2h^*(p)\,,\quad\hbox{ for each }p\ge 0\hbox{ and }\l\in[0,1]\,.
$$

With the upper bound on $\int b(v-v_1,\om)d\om$, this shows that
$$
\begin{aligned}
|T_3|
&\leq \| v_{K_\eps}\|_{L^2((1+|v|)^\b M dv)} \| \hat \g_\eps G_\eps \|_{L^\infty_v} 
\|\sqrt{G_{\eps 1}}-1\|_{L^2((1+|v_1|)^\b M_1 dv_1)} 
\\
&\qquad \left\|  \frac1{\eps^2}\left(\sqrt{G'_\eps G'_{\eps 1}}
	-\sqrt{G_\eps G_{\eps 1}}\right)\right\|_{L^2_\mu}
\\
&= O(\sqrt{\eps})_ {L^2_t(L^1_x)} +O(\sqrt{\eps})_ {L^2_{t,x}}\,.
\end{aligned}
$$
Combining the previous results leads to  the expected estimate for $ \bD_\eps(v)$.
\end{proof}

\bigskip\bigskip
At this point, we conclude this section with the 

\begin{proof}[Proof of Proposition \ref{DVCNV-PR}]
First, we apply the compensated compactness argument for the acoustic system in
\cite{LiMa1} --- see also Theorem \ref{A2-thm} --- to conclude from the statement in
Lemma \ref{CMPCPT-LM} that
$$
\begin{aligned}
\int\grad_xU:Q(\l_\de\star\la v_{K_{\eps_n}}g_{\eps_n}\g_{\eps_n}\ra)^{\otimes 2}dx
&\to 0
\\
\Div_x(\l_\de\star\la\tfrac13|v|^2_{K_{\eps_n}}g_{\eps_n}\g_{\eps_n}\ra
	Q(\l_\de\star\la v_{K_{\eps_n}}g_{\eps_n}\g_{\eps_n}\ra))&\to 0
\end{aligned}
$$
in the sense of distributions on $\bR_+^*$ and $\bR_+^*\times\bR^3$ respectively,
for each divergence-free vector field $U\in C^\infty_c(\bR^3;\bR^3)$. 

On the other hand, the compactness property in the $x$-variable stated in Proposition
\ref{CPTX-PR} and (\ref{UnifConvxPu-th}) implies that
$$
\begin{aligned}
Q(\l_\de\star\la v_{K_{\eps_n}}g_{\eps_n}\g_{\eps_n}\ra
-Q(\la v_{K_{\eps_n}}g_{\eps_n}\g_{\eps_n}\ra\to 0
\\
\l_\de\star\la\tfrac13|v|^2_{K_{\eps_n}}g_{\eps_n}\g_{\eps_n}\ra
-\la\tfrac13|v|^2_{K_{\eps_n}}g_{\eps_n}\g_{\eps_n}\ra\to 0
\end{aligned}
$$
in $L^2_{loc}(dtdx)$ as $\de\to 0$, uniformly in $n$. Therefore, one has
\begin{equation}
\label{osc2}
\begin{aligned}
\int\grad_xU:Q(\la v_{K_{\eps_n}}g_{\eps_n}\g_{\eps_n}\ra)^{\otimes 2}dx
&\to 0
\\
\Div_x(\la\tfrac13|v|^2_{K_{\eps_n}}g_{\eps_n}\g_{\eps_n}\ra
Q(\la v_{K_{\eps_n}}g_{\eps_n}\g_{\eps_n}\ra))&\to 0
\end{aligned}
\end{equation}
in the sense of distributions on $\bR_+^*$ and $\bR_+^*\times\bR^3$ respectively,
for each divergence-free vector field $U\in C^\infty_c(\bR^3;\bR^3)$.

Also, we recall from Proposition \ref{CPTPU-PR} and (\ref{Lmt-uth}) that
$$
\begin{aligned}
P\la v_{K_{\eps_n}}g_{\eps_n}\g_{\eps_n}\ra\to u
\hbox{ strongly in }L^2_{loc}(dtdx)\,,
\\
Q\la v_{K_{\eps_n}}g_{\eps_n}\g_{\eps_n}\ra\to 0
\hbox{ weakly in }L^2_{loc}(dtdx)\,.
\end{aligned}
$$
Therefore, for each compactly supported, $C^\infty$ solenoidal vector field $U$, one has
$$
\begin{aligned}
\int_{\bR^3}\grad_xU:\la v_{K_{\eps_n}}g_{\eps_n}\g_{\eps_n}\ra^{\otimes 2}dx
=
\int_{\bR^3}\grad_xU:(P\la v_{K_{\eps_n}}g_{\eps_n}\g_{\eps_n}\ra)^{\otimes 2}dx
\\
+
\int_{\bR^3}\grad_xU:(Q\la v_{K_{\eps_n}}g_{\eps_n}\g_{\eps_n}\ra)^{\otimes 2}dx
\\
+
\int_{\bR^3}\grad_xU:(P\la v_{K_{\eps_n}}g_{\eps_n}\g_{\eps_n}\ra\otimes
Q\la v_{K_{\eps_n}}g_{\eps_n}\g_{\eps_n}\ra)dx
\\
+
\int_{\bR^3}\grad_xU:(Q\la v_{K_{\eps_n}}g_{\eps_n}\g_{\eps_n}\ra\otimes
P\la v_{K_{\eps_n}}g_{\eps_n}\g_{\eps_n}\ra)dx
\\
\to\int_{\bR^3}\grad_xU:u\otimes u dx
\end{aligned}
$$
in the sense of distributions on $\bR_+^*$. Together with (\ref{FcA->}), this implies the first
limit in Proposition \ref{DVCNV-PR}.

On the other hand, Proposition \ref{CPTPU-PR} and (\ref{Lmt-uth}) imply that
$$
\begin{aligned}
\la(\tfrac15|v|^2-1)_{K_{\eps_n}}g_{\eps_n}\g_{\eps_n}\ra\to\th
\hbox{ strongly in }L^2_{loc}(dtdx)\,,
\\
\la|v|^2_{K_{\eps_n}}g_{\eps_n}\g_{\eps_n}\ra\to 0
\hbox{ weakly in }L^2_{loc}(dtdx)\,.
\end{aligned}
$$
Hence
$$
\begin{aligned}
\Div_x(\la(\tfrac13|v|^2-1)_{K_{\eps_n}}g_{\eps_n}\g_{\eps_n}\ra
\la v_{K_{\eps_n}}g_{\eps_n}\g_{\eps_n}\ra)
\\
=
\Div_x(\la(\tfrac15|v|^2-1)_{K_{\eps_n}}g_{\eps_n}\g_{\eps_n}\ra
P\la v_{K_{\eps_n}}g_{\eps_n}\g_{\eps_n}\ra)
\\
+
\tfrac{2}{15}\Div_x(\la|v|^2_{K_{\eps_n}}g_{\eps_n}\g_{\eps_n}\ra
Q\la v_{K_{\eps_n}}g_{\eps_n}\g_{\eps_n}\ra)
\\
+
\tfrac{2}{15}\Div_x(\la|v|^2_{K_{\eps_n}}g_{\eps_n}\g_{\eps_n}\ra
P\la v_{K_{\eps_n}}g_{\eps_n}\g_{\eps_n}\ra)
\\
+
\Div_x(\la(\tfrac15|v|^2-1)_{K_{\eps_n}}g_{\eps_n}\g_{\eps_n}\ra
Q\la v_{K_{\eps_n}}g_{\eps_n}\g_{\eps_n}\ra)
\\
\to \Div_x(u\th)
\end{aligned}
$$
in the sense of distributions on $\bR_+^*\times\bR^3$. With (\ref{FcB->}), this
entails the second statement in Proposition \ref{DVCNV-PR}.
\end{proof}

\subsection{End of the proof of Theorem \ref{BNSW-TH}}

At this point we return to the renormalized, truncated momentum and energy conservations
in the form (\ref{Mmnt-Pv->0}) and (\ref{Mmnt-v2->0}).

\bigskip
\noindent
\underbar{ Asymptotic conservation of momentum}~: 
by using the convergence properties in (\ref{Lmt-Dffs}) and Proposition \ref{DVCNV-PR} with
the decomposition (\ref{FlxDcmp}), one sees that, for each $C^\infty$, compactly supported, 
solenoidal vector field $U$,
$$
\int_{\bR^3}\grad_xU:\bF_{\eps_n}(A)dx\to\int_{\bR^3}\grad_xU:u\otimes udx
-\nu\int_{\bR^3}\grad_xU:(\grad_xu+(\grad_xu)^T)dx
$$
in the sense of distributions on $\bR_+^*$, while
$$
\Div_x\bF_{\eps_n}(B)\to\Div_x(u\th)-\ka\Dlt_x\th
$$
in the sense of distributions in $\bR_+^*\times\bR^3$. Furthermore, since $\Div_xu=0$, one
has
$$
\int_{\bR^3}\grad_xU:(\grad_xu)^Tdx=\int_{\bR^3}\grad_x(\Div_xU)\cdot udx=0
$$
for each solenoidal test vector field $U$, so that
$$
\int_{\bR^3}\grad_xU:\bF_{\eps_n}(A)dx\to\int_{\bR^3}\grad_xU:u\otimes udx
-\nu\int_{\bR^3}\grad_xU:\grad_xudx
$$
in the sense of distributions on $\bR_+^*$.

On the other hand, by Proposition \ref{CPTPU-PR},
$$
\int_{\bR^3}U\cdot\la v_{K_{\eps_n}}g_{\eps_n}\g_{\eps_n}\ra dx\to\int_{\bR^3}U\cdot udx
$$
uniformly on $[0,T]$ for each $T>0$. In particular, for $t=0$, one has 
$$
\int_{\bR^3}U\cdot u\rstr_{t=0}dx
=\lim_{\eps\to 0}\int_{\bR^3}U\cdot P\left(\frac1\eps\int_{\bR^3}vF^{in}_\eps dv\right)dx
=\int_{\bR^3}U\cdot u^{in}dx\,.
$$
Therefore, $u$ satisfies
$$
\begin{aligned}
\d_t\int_{\bR^3}U\cdot udx
-\int_{\bR^3}\grad_xU:u\otimes udx+\nu\int_{\bR^3}\grad_xU:\grad_xudx&=0\,,\quad t>0\,,
\\
u\rstr_{t=0}&=u^{in}\,.
\end{aligned}
$$

\bigskip
\noindent
\underbar{ Asymptotic conservation of energy}~: 
likewise,
$$
\la(\tfrac15|v|^2-1)_{K_{\eps_n}}g_{\eps_n}\g_{\eps_n}\ra\to\th
$$
in $C(\bR_+;w-L^2_x)$. In particular, for $t=0$, one has
$$
\th\rstr_{t=0}=w-\lim_{\eps\to 0}\frac1\eps\int_{\bR^3}(\tfrac15|v|^2-1)F^{in}_\eps dv=\th^{in}\,.
$$
Therefore, $\th$ satisfies
$$
\begin{aligned}
\d_t\th+\Div_x(u\th)-\ka\Dlt_x\th&=0\,,\quad x\in\bR^3\,,\,\,t>0\,,
\\
\th\rstr_{t=0}&=\th^{in}
\end{aligned}
$$

Notice that one has also
$$
\begin{aligned}
\frac1{\eps_n}\int_{\bR^3}vF_{\eps_n}dv=\la vg_{\eps_n}\ra\to u
\\
\frac1{\eps_n}\int_{\bR^3}(\tfrac15|v|^2-1)(F_{\eps_n}-M)dv
	=\la(\tfrac15|v|^2-1)g_{\eps_n}\ra\to\th
\end{aligned}
$$
weakly in $L^1_{loc}(dtdx)$, because of (\ref{Cnvg-gtq}) and (\ref{g=}).

\bigskip
\noindent
\underbar{ Asymptotic energy inequality}~: by Proposition \ref{TQ=-LM} and (\ref{identity1}), 
one has
$$ 
\frac{2}{\eps_n} (\sqrt{G_{\eps_n}}-1) \to g \hbox{ in }\wL_{loc}^2(dt, L^2(dxMdv))
$$
and 
$$ 
\frac{1}{\eps_n^2} (\sqrt{G_{\eps_n 1}'G_{\eps_n}'}-\sqrt{G_{\eps_n 1}G_{\eps_n}})
\to \tilde q \hbox{ in }\wL^2(dtdxd\mu)\,.
$$

Then, by (\ref{Entr-estm2}) and (\ref{Entr-prd1}),
$$
\begin{aligned}
\iint Mg^2(t,x,v)dxdv
& \leq \varliminf_{n\to \infty} 4\iint M\left(\frac{\sqrt{G_{\eps_n}}-1}{\eps_n}\right)^2(t,x,v) dvdx 
\\
&\leq \varliminf_{n\to \infty} \frac{2}{\eps_n^2} H(F_{\eps_n}|M)(t),
\end{aligned}
$$
and
$$
\begin{aligned}
\int_0^t\iint\tilde q^2 dsdxd\mu 
&\leq \varliminf_{n\to \infty}\int_0^t\iint\left(
\frac{\sqrt{G_{\eps_n 1}'G_{\eps_n}'}-\sqrt{G_{\eps_n 1}G_{\eps_n}}}{\eps_n^2}\right)^2 dsdxd\mu
\\
& \leq \varliminf_{n\to \infty}\frac1{\eps_n^4}\int_0^t\int \cE(F_{\eps_n})dsdx
\end{aligned}
$$

Explicit computations based on the limiting forms (\ref{g=}) and (\ref{q-lim}) of 
$$
g\hbox{ and }\iint \tilde q b(v-v_1,\omega) d\omega M_1 dv_1
$$ 
and using the symmetries of $\tilde q$ under the $d\mu$-symmetries imply that
$$
\iint Mg^2(t,x,v)dxdv= \int (|u|^2(t,x) +\tfrac52|\theta|^2(t,x) )dx\,,
$$
while
$$
\int\tilde q^2 d\mu\geq
	\tfrac12\nu|\nabla_x u+(\nabla _x u)^T|^2 +\tfrac52\kappa|\nabla_x \theta|^2
$$
(see Lemma 4.7 in \cite{BGL2} for a detailed  proof of these statements.)

Taking limits in the scaled entropy inequality 
$$
\frac1{\eps^2} H(F_\eps|M)(t)
	+\frac1{\eps^4} \int_0^t \int \cE(F_\eps)(s,x) dxds 
		\leq \frac1{\eps^2} H(F_\eps^{in}|M)
$$
entails the expected energy inequality
$$
\begin{aligned}
\int_{\bR^3}(\tfrac12|u(t,x)|^2+\tfrac54|\th(t,x)|^2)dx
&+
\int_0^t\int_{\bR^3}(\nu|\grad_xu|^2+\tfrac52\ka|\grad_x\th|^2)dxds
\\
&\le \varliminf\frac1{\eps^2} H(F_\eps^{in}|M)
\end{aligned}
$$
With this last observation, the proof of Theorem \ref{BNSW-TH} is complete.

\setcounter{equation}{0}
\setcounter{subsection}{0}
\setcounter{Thm}{0}
\renewcommand{\theequation}{A.\arabic{equation}}
\renewcommand{\thesubsection}{A.\arabic{subsection}}

\vskip2cm

\section*{Appendix A. Some results about the limits of products}
\renewcommand{\thesection}{A}
\numberwithin{equation}{section} \numberwithin{Thm}{section}


For the sake of completeness, we recall here without proof some classical results 
used in the present paper to pass to the limit in nonlinear terms.

\bigskip
The first one is due to DiPerna and Lions \cite{DPL}, and is referred to as the 
Product Limit Theorem in \cite{BGL2}~:

\begin{Thm}\label{A1-thm}
Let $\mu$ be a finite, positive Borel measure on a Borel subset $X$ of $\bR^N$. 
Consider two sequences of real-valued measurable functions defined on $X$ 
denoted $\varphi_n$ and $\psi_n$. 

Assume that $(\psi_n)$ is bounded in $L^\infty(d\mu)$ and such that $\psi_n \to \psi$ 
a.e. on $X$ while $\varphi_n \to \varphi $ in $w-L^1(d\mu)$. Then the product
$$
\varphi_n \psi_n \to \varphi\psi\hbox{ in }L^1(d\mu)\hbox{ weak.}
$$
\end{Thm}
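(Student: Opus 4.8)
The plan is to test the product against an arbitrary $g\in L^\infty(d\mu)$ and show that $\int_X\varphi_n\psi_ng\,d\mu\to\int_X\varphi\psi g\,d\mu$, which is exactly the assertion that $\varphi_n\psi_n\to\varphi\psi$ weakly in $L^1(d\mu)$. First I would record two preliminary facts. Since $\psi_n\to\psi$ a.e.\ with $\|\psi_n\|_{L^\infty}\le M:=\sup_n\|\psi_n\|_{L^\infty}<\infty$, the limit satisfies $\|\psi\|_{L^\infty}\le M$, so $\psi g\in L^\infty(d\mu)$ and $\varphi\psi g\in L^1(d\mu)$. Moreover, since $(\varphi_n)$ converges weakly in $L^1(d\mu)$, it is bounded in $L^1(d\mu)$ by the Banach--Steinhaus theorem, and — this is the crucial point — the Dunford--Pettis theorem guarantees that a weakly convergent sequence in $L^1$ over a finite measure space is uniformly integrable: for each $\eta>0$ there is $\rho>0$ such that $\mu(S)<\rho$ implies $\sup_n\int_S|\varphi_n|\,d\mu<\eta$.

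With these in hand I would split
$$
\int_X(\varphi_n\psi_n-\varphi\psi)g\,d\mu
=\int_X\varphi_n(\psi_n-\psi)g\,d\mu+\int_X(\varphi_n-\varphi)\psi g\,d\mu.
$$
The second integral tends to $0$ immediately, because $\psi g\in L^\infty(d\mu)$ is an admissible test function for the weak convergence $\varphi_n\rightharpoonup\varphi$. The first integral is the genuine difficulty: here $(\psi_n-\psi)g\to0$ only a.e.\ (not uniformly), while $\varphi_n$ converges only weakly, so one cannot pass to the limit by any naive argument.

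To treat the first integral I would invoke Egorov's theorem, which applies precisely because $\mu$ is finite: given the $\rho>0$ above, there is a measurable set $E\subset X$ with $\mu(X\setminus E)<\rho$ on which $\psi_n\to\psi$ uniformly. Splitting the integral over $E$ and $X\setminus E$, on $E$ one has
$$
\left|\int_E\varphi_n(\psi_n-\psi)g\,d\mu\right|
\le\|g\|_{L^\infty}\Big(\sup_E|\psi_n-\psi|\Big)\sup_n\|\varphi_n\|_{L^1}\to0
$$
by the uniform convergence on $E$ and the $L^1$ bound, while on the complement
$$
\left|\int_{X\setminus E}\varphi_n(\psi_n-\psi)g\,d\mu\right|
\le 2M\|g\|_{L^\infty}\int_{X\setminus E}|\varphi_n|\,d\mu\le 2M\|g\|_{L^\infty}\eta
$$
uniformly in $n$, by the uniform integrability. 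Sending $n\to\infty$ first and then $\eta\to0$ shows the first integral vanishes, which completes the argument.

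The main obstacle, and the only nonelementary ingredient, is thus the passage to the limit in $\int_X\varphi_n(\psi_n-\psi)g\,d\mu$: the weak $L^1$ convergence of $(\varphi_n)$ by itself is too weak to absorb the merely a.e.\ convergence of $(\psi_n-\psi)g$. The resolution rests on the Dunford--Pettis characterization of weakly compact — equivalently, uniformly integrable — subsets of $L^1$ over a finite measure space, combined with Egorov's theorem, which converts the a.e.\ convergence of $(\psi_n)$ into uniform convergence off a set of arbitrarily small $\mu$-measure.
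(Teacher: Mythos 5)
Your proof is correct. The paper states Theorem \ref{A1-thm} \emph{without proof} (it is recalled in Appendix A and attributed to DiPerna--Lions \cite{DPL}), and your argument --- splitting $\int(\varphi_n\psi_n-\varphi\psi)g\,d\mu$ into $\int\varphi_n(\psi_n-\psi)g\,d\mu+\int(\varphi_n-\varphi)\psi g\,d\mu$, killing the second term by weak $L^1$ convergence, and handling the first via Egorov's theorem together with the Dunford--Pettis uniform integrability of the weakly convergent sequence $(\varphi_n)$ --- is exactly the standard classical proof of this Product Limit Theorem, with all hypotheses (finiteness of $\mu$ for Egorov, the $L^\infty$ bound giving $|\psi_n-\psi|\le 2M$ off the Egorov set) used correctly.
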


\bigskip
The second one is due to Lions and Masmoudi \cite{LiMa1}, and can be viewed as 
a compensated compactness result. It states that (fast oscillating) acoustic waves 
do not contribute to the macroscopic dynamics in the incompressible limit~:

\begin{Thm}\label{A2-thm}
Let $c\neq 0$. Consider two families $(\varphi_\eps)$ and $(\nabla_x \psi_\eps)$ 
bounded in $L^\infty_{loc} (dt, L^2_{loc}(dx))$, such that
$$
\begin{aligned}
\d_t \varphi_\eps+\frac1\eps \Delta_x \psi_\eps =\frac1\eps F_\eps,\\
\d_t \nabla \psi_\eps+\frac{c^2}\eps\nabla_x \varphi_\eps =\frac1\eps G_\eps,
\end{aligned}
$$
for some $F_\eps, G_\eps$ converging to 0 in $L^1_{loc}(dt, L^2_{loc}(dx))$.

Then the quadratic quantities
$$
P\nabla_x \cdot ((\nabla_x \psi_\eps)^{\otimes 2}) 
	\hbox{ and } \nabla_x \cdot (\varphi_\eps \nabla_x \psi_\eps)
$$
converge to 0 in the sense of distributions on $\bR_+^*\times \bR^3$.
\end{Thm}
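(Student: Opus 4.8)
The plan is to exploit the acoustic-wave structure of the two equations so as to rewrite each quadratic expression as the sum of three harmless pieces: a pure gradient field (annihilated by $P$, or equivalently killed when testing against a divergence-free field), an $\eps$-multiple of an \emph{exact} time derivative of a quantity bounded in $L^\infty_{loc}(dt;L^1_{loc}(dx))$, and products involving the vanishing sources $F_\eps$ and $G_\eps$. Writing $u_\eps:=\grad_x\psi_\eps$, the structural fact that makes everything work is that $u_\eps$ is curl-free, so that $(u_\eps\cdot\grad_x)u_\eps=\tfrac12\grad_x|u_\eps|^2$ and hence
$$
\grad_x\cdot(u_\eps\otimes u_\eps)=\tfrac12\grad_x|u_\eps|^2+(\Div_x u_\eps)\,u_\eps
=\tfrac12\grad_x|u_\eps|^2+(\Dlt_x\psi_\eps)\,u_\eps\,.
$$

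First I would treat $P\grad_x\cdot(u_\eps\otimes u_\eps)$. Testing against a smooth, compactly supported, divergence-free vector field and using the self-adjointness of $P$ (so that $P$ falls on the test field, which it leaves unchanged), the gradient term $\tfrac12\grad_x|u_\eps|^2$ drops out at once. There remains $(\Dlt_x\psi_\eps)u_\eps$; from the first equation $\Dlt_x\psi_\eps=-\eps\d_t\varphi_\eps+F_\eps$, and the piece $F_\eps u_\eps\to 0$ in $L^1_{loc}(dtdx)$ since $F_\eps\to 0$ in $L^1_{loc}(dt;L^2_{loc})$ while $u_\eps=O(1)$ in $L^\infty_{loc}(dt;L^2_{loc})$. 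For the remaining $-\eps(\d_t\varphi_\eps)u_\eps$ I would integrate by parts in time via $(\d_t\varphi_\eps)u_\eps=\d_t(\varphi_\eps u_\eps)-\varphi_\eps\d_t u_\eps$ and then insert the \emph{second} equation in the form $\eps\,\d_t u_\eps=-c^2\grad_x\varphi_\eps+G_\eps$, which yields
$$
-\eps(\d_t\varphi_\eps)u_\eps=-\eps\,\d_t(\varphi_\eps u_\eps)-\tfrac{c^2}2\grad_x(\varphi_\eps^2)+\varphi_\eps G_\eps\,.
$$
The middle term is again a gradient, killed by $P$; the last tends to $0$ in $L^1_{loc}(dtdx)$; and the first is $\eps$ times $\d_t(\varphi_\eps u_\eps)$ with $\varphi_\eps u_\eps$ bounded in $L^\infty_{loc}(dt;L^1_{loc})$, so after moving $\d_t$ onto the test function it contributes $O(\eps)\to0$.

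The scalar quantity $\grad_x\cdot(\varphi_\eps u_\eps)$ is handled by the same bookkeeping, more directly. Using $\varphi_\eps\Dlt_x\psi_\eps=-\tfrac\eps2\d_t(\varphi_\eps^2)+\varphi_\eps F_\eps$ from the first equation and $u_\eps\cdot\grad_x\varphi_\eps=\tfrac1{c^2}u_\eps\cdot(G_\eps-\eps\,\d_t u_\eps)=\tfrac1{c^2}u_\eps\cdot G_\eps-\tfrac{\eps}{2c^2}\d_t|u_\eps|^2$ from the second, one finds
$$
\grad_x\cdot(\varphi_\eps u_\eps)=-\tfrac\eps2\,\d_t\!\left(\varphi_\eps^2+\tfrac1{c^2}|u_\eps|^2\right)+\varphi_\eps F_\eps+\tfrac1{c^2}u_\eps\cdot G_\eps\,.
$$
The source products vanish in $L^1_{loc}(dtdx)$, while the remaining term is $\eps$ times the time derivative of the acoustic energy density $\varphi_\eps^2+\tfrac1{c^2}|u_\eps|^2$, bounded in $L^\infty_{loc}(dt;L^1_{loc})$; integrating by parts in $t$ against the test function again gives $O(\eps)$.

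The point worth stressing, and the only genuine difficulty, is that the two families oscillate fast in time (at frequency $\sim 1/\eps$, the acoustic waves), so that neither $u_\eps\otimes u_\eps$ nor $\varphi_\eps u_\eps$ converges to the product of the weak limits; the entire content of the statement is that these \emph{particular} combinations are exactly the ones whose oscillations cancel. The delicate step is therefore the simultaneous use of both equations to convert both $\d_t\varphi_\eps$ and $\d_t u_\eps$ into gradients plus controllable remainders, combined with the curl-free identity; the role of $P$ (and of testing against divergence-free fields) is only to discard the genuine gradient contributions, whose compensation would otherwise be lost. A minor technical care is needed because $P$ is a global, not a local, operator; this is circumvented by the mollification in $x$ carried out where the result is applied (see Lemma \ref{CMPCPT-LM}), or equivalently by testing directly against divergence-free fields so that $P$ never acts on the weakly converging quantities themselves.
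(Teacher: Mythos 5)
The paper offers no proof of this statement to compare against: Appendix~A explicitly recalls Theorem~\ref{A2-thm} \emph{without proof} from Lions--Masmoudi \cite{LiMa1}. Measured against that source, your argument is correct and is essentially the standard proof of the lemma: writing $u_\eps:=\nabla_x\psi_\eps$ and using that $u_\eps$ is curl-free, you assemble the exact identities
$$
\begin{aligned}
\nabla_x\cdot(\varphi_\eps u_\eps)&=-\tfrac\eps2\,\d_t\bigl(\varphi_\eps^2+\tfrac1{c^2}|u_\eps|^2\bigr)
+\varphi_\eps F_\eps+\tfrac1{c^2}u_\eps\cdot G_\eps\,,\\
\nabla_x\cdot(u_\eps\otimes u_\eps)&=\nabla_x\bigl(\tfrac12|u_\eps|^2-\tfrac{c^2}2\varphi_\eps^2\bigr)
-\eps\,\d_t(\varphi_\eps u_\eps)+F_\eps u_\eps+\varphi_\eps G_\eps\,,
\end{aligned}
$$
after which the gradients are killed by divergence-free test fields, the source products vanish in $L^1_{loc}(dtdx)$ by Cauchy--Schwarz, and the $\eps\,\d_t(\cdot)$ terms are $O(\eps)$ after moving the time derivative onto the test function, since $\varphi_\eps^2+\tfrac1{c^2}|u_\eps|^2$ and $\varphi_\eps u_\eps$ are bounded in $L^\infty_{loc}(dt;L^1_{loc}(dx))$. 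Your signs and the smallness bookkeeping all check out, and your interpretation of the $P$ part of the conclusion --- pairing against compactly supported solenoidal fields, so that $P$ never acts on the merely locally bounded quantities --- is the right one and is exactly the form in which the result is invoked in Lemma~\ref{CMPCPT-LM} and Proposition~\ref{DVCNV-PR}.

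One point should be made explicit rather than left implicit. Under the stated hypotheses the intermediate products you manipulate, such as $(\d_t\varphi_\eps)u_\eps$, $\varphi_\eps\Dlt_x\psi_\eps$ and $u_\eps\cdot\nabla_x\varphi_\eps$, are not separately well-defined distributions ($\d_t\varphi_\eps$ and $\nabla_x\varphi_\eps$ are only negative-order objects multiplied by $L^2_{loc}$ functions); only the two assembled identities above make sense. The clean justification is to mollify in $x$, observing that mollification commutes with the constant-coefficient acoustic system so the regularized families satisfy the same equations with regularized sources, derive the identities for the smooth-in-$x$ families, and then let the mollification parameter tend to $0$ \emph{at fixed} $\eps$: every term in the two identities is a product of strongly convergent $L^2_{loc}$ pairs, so both sides pass to the limit, and no uniform-in-$\eps$ equicontinuity is needed. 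You mention mollification only as a remedy for the nonlocality of $P$; it is also what legitimizes the product and chain-rule steps, though this is routine. With that caveat spelled out, the proof is complete.
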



\setcounter{equation}{0}
\setcounter{subsection}{0}
\setcounter{Thm}{0}
\renewcommand{\theequation}{B.\arabic{equation}}
\renewcommand{\thesubsection}{B.\arabic{subsection}}

\vskip2cm

\section*{Appendix B. Some regularity results for the  free transport operator}
\renewcommand{\thesection}{B}
\numberwithin{equation}{section} \numberwithin{Thm}{section}


The main new idea in our previous work on the Navier-Stokes limit of the Boltzmann 
equation \cite{GSRInvMath} was to improve integrability and regularity with respect 
to the $x$ variables using the integrability with respect to the $v$ variables. 

This property is obtained by combining the velocity averaging lemma \cite{GLPS, GPS} 
with dispersive properties  of the free transport operator \cite{CP}. 

We state here two results of this kind used in the present paper, whose proof can be 
found in \cite{GSR2} or \cite{GSRInvMath}.

\bigskip
The first such result, based on the dispersive properties of free transport, explains 
how the streaming operator transfers uniform integrability from the $v$ variables to the 
$x$ variables.

\begin{Thm}\label{B1-thm}
Consider  a bounded family $(\psi_\eps)$ of  $ L^\infty_{loc}(dt, L^1_{loc}(dxdv))$ 
such that $(\eps \d_t +v\cdot \nabla_x) \psi_\eps$ is bounded  in $L^1_{loc}(dtdxdv)$. 
Assume that $(\psi_\eps)$ is locally uniformly integrable in the $v$-variable --- see
Proposition \ref{UINTV-PR} for a definition of this notion. Then $(\psi_\eps)$ is locally 
uniformly integrable (in all variables $t$, $x$ and $v$).
\end{Thm}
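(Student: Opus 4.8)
The plan is to establish local uniform integrability of $(\psi_\eps)$ through its standard characterization: on each slab $[0,T]\times K\times\bR^3$ (with $K\subset\bR^3$ compact), by the Dunford--Pettis criterion it is equivalent to show that
\[
\sup_\eps\int_0^T\int_K\int_{\bR^3}|\psi_\eps|\,\indc_{|\psi_\eps|>\l}\,dvdxdt\to 0\quad\hbox{as }\l\to+\infty,
\]
or, in the test-function form used for Proposition \ref{UINTV-PR}, that $\iiint\psi_\eps\varphi\to 0$ uniformly in $\eps$ whenever $\|\varphi\|_{L^\infty}\le 1$ and $\|\varphi\|_{L^1_{t,x,v}}\le\a$ with $\a$ small. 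The hypothesis already forbids concentration in the $v$ variable (uniformly in $x$); the only remaining way mass could escape is concentration in the $(t,x)$ variables, and the entire content of the theorem is that the streaming operator $\eps\d_t+v\cdot\grad_x$ prevents exactly this. I would first reduce to bounded velocities, splitting $\psi_\eps=\psi_\eps\indc_{|v|\le R}+\psi_\eps\indc_{|v|>R}$ and disposing of the tail on $[0,T]\times K\times\bR^3$ by the standing $L^\infty_{loc}(dt;L^1_{loc})$ bound together with the equi-integrability in $v$, so that it suffices to treat the truncated piece.

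Next I would exploit the equation $(\eps\d_t+v\cdot\grad_x)\psi_\eps=b_\eps$ with $b_\eps=O(1)_{L^1_{loc}}$, integrating along the characteristics $s\mapsto(s,x-v(t-s)/\eps,v)$ to obtain a Duhamel representation of $\psi_\eps$ in terms of a transported datum plus $\tfrac1\eps\int b_\eps(s,x-v(t-s)/\eps,v)\,ds$. Testing this representation against a $\varphi$ concentrated in a small $(t,x)$-set, one sees that the shear $x\mapsto x-v\,\de t/\eps$ spreads the profile as $v$ ranges over a set of non-negligible measure: because the uniform integrability in $v$ is precisely what forces the mass to be carried by such a velocity set rather than by a thin slab, any would-be $(t,x)$-concentration is smeared over a region whose size is bounded below by the dispersion estimate of Castella--Perthame \cite{CP}. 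The source term is handled by the same change of variables, the Jacobian of the velocity-to-space map producing a favourable factor which, integrated in time, absorbs the singular $1/\eps$. This is the point where the dispersion of free transport and the velocity averaging lemma \cite{GLPS,GPS} are combined, as in Lemma 3.6 and the appendix of \cite{GSRInvMath}; the detailed estimates are in \cite{GSR2,GSRInvMath}.

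Gathering the transported-datum and source contributions yields $\iiint\psi_\eps\varphi\le\eta$ as soon as $\|\varphi\|_{L^1_{t,x,v}}\le\a(\eta)$, uniformly in $\eps$, which is the desired local uniform integrability in all variables. The main obstacle is the singular $1/\eps$ scaling in the streaming operator: the dispersive gain must be quantified \emph{uniformly in $\eps$} and be strong enough to beat this factor. More subtly, what is being transferred is a non-concentration property rather than a mere norm bound, so the argument must rule out the scenario in which mass hides in a thin velocity slab that does not disperse; this is exactly the contingency excluded by the uniform integrability in $v$, and dovetailing these two facts quantitatively — the dispersion estimate and the $v$-equi-integrability — is the real crux of the proof.
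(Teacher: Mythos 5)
The paper itself states Theorem \ref{B1-thm} \emph{without proof}, referring to \cite{GSR2,GSRInvMath}, so your attempt must be measured against the argument of those references (Lemma 3.6 of \cite{GSRInvMath}). In architecture your sketch coincides with it: a Duhamel representation along the characteristics of $\eps\d_t+v\cdot\grad_x$, the shear $x\mapsto x-sv$ converting small $(t,x)$-sets into $v$-sets of small measure, the hypothesis of uniform integrability in $v$ then killing the transported term, and a measure-preserving change of variables handling the source.

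However, your account of what you call ``the real crux'' --- that the dispersive gain must be quantified uniformly in $\eps$ and be ``strong enough to beat'' a singular $1/\eps$ --- is a genuine misconception, and your proposed resolution (the ``Jacobian \dots absorbs the singular $1/\eps$'') conflates two distinct steps. In the correct parametrization no singular factor appears at all: integrating backward a \emph{time of order} $\eps$, not $O(1)$, gives
$$
\psi_\eps(t,x,v)=\psi_\eps(t-\eps s,\,x-sv,\,v)
+\int_0^s b_\eps(t-\eps\tau,\,x-\tau v,\,v)\,d\tau\,,
\qquad b_\eps:=(\eps\d_t+v\cdot\grad_x)\psi_\eps\,,
$$
so a backward time $\eps s$ already produces the $O(1)$ spatial displacement $sv$ needed for dispersion, while the source integral carries no $1/\eps$: after the measure-preserving change of variables $(t,x)\mapsto(t-\eps\tau,\,x-\tau v)$, its contribution against a test set is at most $s\,\|b_\eps\|_{L^1_{loc}}$, made small by choosing $s$ small (and averaging in $s$). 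The Jacobian $s^{-3}$ of $v\mapsto y+sv$ plays its role elsewhere, in the \emph{transported} term, where it converts smallness of the $(t,x)$-measure of the test set into smallness of a $v$-measure, to which the $v$-equi-integrability hypothesis applies. Your version, with backward time $O(1)$ and displacement $v(t-s)/\eps$, exits every compact set and leaves a genuinely singular source; as written that route does not close, and the exact cancellation above is what repairs it. A minor further point: your preliminary truncation to $|v|\le R$ cannot be justified by the $v$-equi-integrability, which controls concentration but not tightness ($\indc_{|v|>R}$ has infinite $L^1_v$ norm); it is also unnecessary, since the conclusion is local in $v$ as well --- in the application (Proposition \ref{UINT-PR}) the large-velocity tail is handled separately by Proposition \ref{UINTV-PR}.
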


\bigskip
The second one, which is based on the classical velocity averaging theorem in
\cite{GLPS, GPS}, explains how this additional integrability is used to prove a $L^1$ 
averaging lemma.

\begin{Thm}\label{B2-thm}
Consider  a bounded family $(\varphi_\eps)$ of  $ L^2_{loc}(dtdx, L^2(Mdv))$ 
such that $(\eps \d_t +v\cdot \nabla_x) \varphi_\eps$ is weakly relatively compact 
in $L^1_{loc}(dtdxMdv)$. Assume that $(|\varphi_\eps|^2)$ is locally uniformly 
integrable with respect to the measure $dtdxMdv$.

Then, for each function $\xi\equiv \xi(v)$ in $L^2(Mdv)$, each $T>0$ and each 
compact $K\subset \bR^3$, 
$$
\left\| \int \varphi_\eps(t,x+y, v) M\xi(v)dv 
	-\int \varphi_\eps(t,x, v) M\xi(v)dv\right\|_{L^2([0,T]\times K)}\to 0
$$
as $|y|\to 0$ uniformly in $\eps$.
\end{Thm}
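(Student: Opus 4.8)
The plan is to prove the stated equicontinuity in $x$ by the Fourier-analytic velocity averaging method of \cite{GLPS,GPS}, carefully tracking the dependence on the scaling parameter $\eps$. Write $\rho_\eps(t,x)=\langle\xi\varphi_\eps\rangle$ for the velocity average in question and $T_\eps=\eps\d_t+v\cdot\grad_x$. Since $\xi\in L^2(Mdv)$, I first reduce to a bounded $\xi$ with compact support in $v$: writing $\xi=\xi\indc_{|v|\le R,\,|\xi|\le R}+\xi_R$, the Cauchy--Schwarz inequality in $v$ together with the uniform bound on $\varphi_\eps$ in $L^2_{loc}(dtdx;L^2(Mdv))$ shows that the contribution of $\xi_R$ to $\rho_\eps$ is $O(\|\xi_R\|_{L^2(Mdv)})$ in $L^2([0,T]\times K)$, uniformly in $\eps$, hence negligible as $R\to\infty$. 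After multiplying $\varphi_\eps$ by a fixed smooth cutoff $\chi$ in $(t,x)$ (whose commutator with $T_\eps$ contributes a term $(\eps\d_t\chi+v\cdot\grad_x\chi)\varphi_\eps$, bounded in $L^2$ on the compactly supported velocities at hand) one may Fourier transform in $(t,x)$, with dual variables $(\tau,k)$. Equicontinuity in $x$ uniform in $\eps$ then reduces to the high-frequency tightness $\sup_\eps\int_{|k|\ge N}\int_\tau|\hat\rho_\eps(\tau,k)|^2\,d\tau\,dk\to0$ as $N\to\infty$, the remaining low frequencies contributing at most $N^2|y|^2\|\rho_\eps\|^2_{L^2}$.

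The heart of the argument is the resolvent identity: for $\lambda>0$,
\be
\hat\rho_\eps(\tau,k)=\int\frac{\lambda\,\xi M}{\lambda+i(\eps\tau+v\cdot k)}\hat\varphi_\eps\,dv+\int\frac{\xi M}{\lambda+i(\eps\tau+v\cdot k)}\widehat{T_\eps\varphi_\eps}\,dv=:A_\eps+B_\eps\,.
\ee
Applying Cauchy--Schwarz in $v$ to $A_\eps$ produces the factor $\int\frac{\lambda^2|\xi|^2M}{\lambda^2+(\eps\tau+v\cdot k)^2}\,dv$; integrating in the direction of $k$, this is the integral of a Lorentzian of width $\sim\lambda/|k|$, whose value is $\lesssim\lambda/|k|$ \emph{independently of the translation $\eps\tau$}. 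This is the crucial point that makes every estimate uniform in $\eps$ and $\tau$: the singular time frequency enters only as a shift of the velocity slab $\{|\eps\tau+v\cdot k|\lesssim\lambda\}$ and leaves its measure unchanged. Consequently $\int_{|k|\ge N}\int_\tau|A_\eps|^2\lesssim\frac{\lambda}{N}\|\varphi_\eps\|^2_{L^2}\to0$ as $N\to\infty$. For the source, the weak relative compactness of $T_\eps\varphi_\eps$ in $L^1_{loc}$ is equivalent, by Dunford--Pettis, to its uniform integrability; hence for each $\delta>0$ one splits $T_\eps\varphi_\eps=S^{(1)}_\eps+S^{(2)}_\eps$ with $\|S^{(1)}_\eps\|_{L^1}<\delta$ uniformly and $S^{(2)}_\eps$ bounded in $L^2$. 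The contribution of $S^{(2)}_\eps$ is estimated exactly as $A_\eps$, with the Lorentzian $\frac1{\lambda^2+(\eps\tau+v\cdot k)^2}$ of integral $\lesssim\frac1{\lambda|k|}$, giving $\int_{|k|\ge N}\int_\tau|B^{(2)}_\eps|^2\lesssim\frac1{\lambda N}\|S^{(2)}_\eps\|^2_{L^2}\to0$ as $N\to\infty$, for each fixed $\lambda,\delta$.

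There remains the genuinely $L^1$ piece $S^{(1)}_\eps$, and this is where the main difficulty lies: since $\widehat{S^{(1)}_\eps}$ is only bounded in the frequency variables, its contribution cannot be placed in $L^2$ frequency by frequency, so the Fourier estimate above breaks down. I would handle it in physical space, using the dispersive smoothing of the free-transport semigroup \cite{CP} --- the same mechanism underlying Theorem \ref{B1-thm} --- which converts the smallness of $S^{(1)}_\eps$ in $L^1$ into smallness of its velocity average in $L^2([0,T]\times K)$, uniformly in $\eps$; it is precisely here, and in order to upgrade the resulting compactness from $L^1$ to $L^2$ while ruling out concentration, that the hypothesis of local uniform integrability of $(|\varphi_\eps|^2)$ enters. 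One then closes by the standard balancing: given $\eta>0$, fix $\delta$ (controlling $S^{(1)}_\eps$), then $\lambda$, then $N$ large enough that the $A_\eps$ and $B^{(2)}_\eps$ tails are $<\eta$ uniformly in $\eps$, and finally $|y|$ small enough that the low- and intermediate-frequency contributions are $<\eta$. The chief obstacle is thus the interaction of the singular $\eps\d_t$ scaling with the merely $L^1$ integrability of the source: the former is defused by the translation invariance of the slab measure noted above, while the latter is what forces a qualitative, uniform-integrability-based compactness argument in place of a quantitative Sobolev bound.
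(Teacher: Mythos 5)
Your Fourier-side analysis is sound and follows the classical scheme behind this lemma (which the paper does not prove itself, but cites from \cite{GSR2,GSRInvMath}): the reduction to truncated $\xi$, the cutoff in $(t,x)$ after first restricting to $|v|\le R$, the resolvent splitting, the Dunford--Pettis decomposition of the source, and, crucially, the observation that the slab $\{|\eps\tau+v\cdot k|\lesssim\lambda\}$ has measure independent of the shift $\eps\tau$, which is exactly what makes the $L^2$ averaging estimates uniform in $\eps$. The gap is in your treatment of the $L^1$-small piece $S^{(1)}_\eps$. You propose to convert its $L^1$-smallness into $L^2([0,T]\times K)$-smallness of the corresponding velocity average by the dispersive estimates of \cite{CP}. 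No such mechanism exists: kinetic dispersion trades integrability between $t$, $x$ and $v$ but provides no $L^1\to L^2$ gain for averages (an $L^1$-small source can concentrate its average on arbitrarily small sets in $(t,x)$, so no uniform $L^2$ bound can follow from $\|S^{(1)}_\eps\|_{L^1}<\delta$ alone). Moreover, dispersion is the mechanism of Theorem \ref{B1-thm} --- transferring uniform integrability from the $v$-variable to all variables for the \emph{solution} --- not a smoothing device for sources, and you also misplace where the hypothesis on $|\varphi_\eps|^2$ enters.

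The actual closing argument in the cited proofs is more elementary and runs in two steps. First, the resolvent $(\lambda+\eps\d_t+v\cdot\grad_x)^{-1}f(t,x,v)=\int_0^\infty e^{-\lambda s}f(t-\eps s,x-vs,v)\,ds$ is an $L^1$ contraction of norm $1/\lambda$ \emph{uniformly in} $\eps$, so the piece of $\varphi_\eps$ generated by $S^{(1)}_\eps$ is $O(\delta/\lambda)$ in $L^1(dtdxMdv)$, and (for the truncated, bounded $\xi$) its contribution to $\rho_\eps=\langle\xi\varphi_\eps\rangle$ is $O(\delta/\lambda)$ in $L^1_{t,x}$; no dispersion is needed, and combined with your Fourier estimates this yields equicontinuity of $\rho_\eps$ under $x$-translations in $L^1([0,T]\times K)$, uniformly in $\eps$. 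Second --- and this is where the local uniform integrability of $(|\varphi_\eps|^2)$ with respect to $dtdxMdv$ is used --- one has by Cauchy--Schwarz $|\rho_\eps(t,x+y)-\rho_\eps(t,x)|^2\le 2\|\xi\|_{L^2(Mdv)}^2\left(\langle|\varphi_\eps|^2\rangle(t,x+y)+\langle|\varphi_\eps|^2\rangle(t,x)\right)$, and the family $\langle|\varphi_\eps|^2\rangle$ is uniformly integrable on $[0,T]\times K$; hence the squared differences are uniformly integrable in $(t,x)$, and the $L^1$-equicontinuity (which gives convergence to $0$ in measure, uniformly in $\eps$) upgrades to the asserted $L^2$-equicontinuity by Vitali's convergence theorem. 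In short: replace your dispersive step by the trivial $1/\lambda$ resolvent bound on $L^1$ plus this Vitali interpolation, and your proof closes.
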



\setcounter{equation}{0}
\setcounter{subsection}{0}
\setcounter{Thm}{0}
\renewcommand{\theequation}{B.\arabic{equation}}
\renewcommand{\thesubsection}{B.\arabic{subsection}}

\vskip2cm

\section*{Appendix C. Some regularity results for the Leray projection}
\renewcommand{\thesection}{C}
\numberwithin{equation}{section} \numberwithin{Thm}{section}


One annoying difficulty in handling incompressible or weakly compressible 
models is the nonlocal nature of the Leray projection $P$ --- defined on the
space $L^2(\bR^3;\bR^3)$ of square integrable vector fields, on the closed
subspace of divergence-free vector fields. By definition, $P$ is continuous
on $L^2(\bR^3;\bR^3)$, as well as on $H^s(\bR^3;\bR^3)$ --- since $P$ is
a classical pseudo-differential operator of order $0$. However, $P$ is not
continuous on local spaces of the type $L^p_{loc}(dx)$. Here is how we
make up for this lack of continuity.

\bigskip
A first observation leads to a \textit{local} $L^2$-equicontinuity statement 
provided that some \textit{global bound} is known to hold.

\begin{Lem}\label{P-equicontinuity}
Consider  a  sequence of vector fields $(\psi_n)$ uniformly bounded in 
$L^\infty_t(L^2(dx))$. Assume that, for each $T>0$ and and each compact  
$K\subset \bR^3$,
$$
\int_0^T\int_K \left| \psi_n(t,x+y) -\psi_n(t,x)\right|^2 dxdt \to 0 \hbox{ as }|y|\to 0
$$
uniformly in $n$.

Then, for each $T>0$ and and each compact  $K\subset \bR^3$,
$$
\int_0^T\int_K \left|P \psi_n(t,x+y) -P\psi_n(t,x)\right|^2 dxdt \to 0 \hbox{ as }|y|\to 0
$$
uniformly in $n$.
\end{Lem}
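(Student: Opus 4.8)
The plan is to exploit the translation invariance of $P$ together with a spatial cutoff, so as to trade the nonlocality of $P$ for a gain of one derivative coming from the commutator $[\chi,P]$. I fix $T>0$ and a compact $K$, choose $\chi\in C^\infty_c(\bR^3)$ with $\chi\equiv 1$ on a $1$-neighborhood of $K$, and restrict to increments $|y|\le 1$. Since $P$ is a Fourier multiplier it commutes with the translation $\tau_yf(x)=f(x+y)$; because $\chi\equiv 1$ both on $K$ and on $K+y$, one has the pointwise identity on $K$
$$
\tau_yP\psi_n-P\psi_n=\tau_y(\chi P\psi_n)-\chi P\psi_n\,.
$$
Thus it suffices to establish the uniform $L^2(\bR^3)$-equicontinuity of $\chi P\psi_n$. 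I then split
$$
\chi P\psi_n=P(\chi\psi_n)+[\chi,P]\psi_n
$$
and treat the two terms separately.

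For the first term, the global $L^2$-continuity of $P$ gives $\|\tau_yP(\chi\psi_n)-P(\chi\psi_n)\|_{L^2}\le\|\tau_y(\chi\psi_n)-\chi\psi_n\|_{L^2}$. Writing
$$
\tau_y(\chi\psi_n)-\chi\psi_n=(\tau_y\chi)(\tau_y\psi_n-\psi_n)+(\tau_y\chi-\chi)\psi_n\,,
$$
the first summand is controlled, after integration in $t$ over $[0,T]$, by $\|\chi\|_{L^\infty}$ times the equicontinuity modulus of $\psi_n$ over the fixed compact $K'\supset\Supp\chi-y$ (valid for $|y|\le 1$), which tends to $0$ uniformly in $n$ by hypothesis; the second summand is bounded by $\|\tau_y\chi-\chi\|_{L^\infty}\sup_n\|\psi_n\|_{L^\infty_t(L^2)}$, which tends to $0$ by uniform continuity of $\chi$ together with the global bound. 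Hence $P(\chi\psi_n)$ is uniformly $L^2$-equicontinuous.

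The heart of the matter is the commutator term. Since $P=I-Q$ with $Q_{jk}=R_jR_k$ a matrix of products of Riesz transforms, one has $[\chi,P]=-[\chi,Q]$, whose integral kernel is $\kappa(x,z)=k(x-z)\,(\chi(z)-\chi(x))$ with $k$ the Calder\'on--Zygmund kernel of $Q$. The factor $\chi(z)-\chi(x)=O(|x-z|)$ renders $\kappa$ only weakly singular, and symbolic calculus (the principal symbols cancel because the symbol of $Q$ is independent of $x$) shows that $[\chi,P]$ is a pseudodifferential operator of order $-1$, hence maps $L^2(\bR^3)$ boundedly into $H^1(\bR^3)$. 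Consequently
$$
\sup_{t,n}\|[\chi,P]\psi_n(t,\cdot)\|_{H^1}\le C\sup_{t,n}\|\psi_n(t,\cdot)\|_{L^2}<+\infty\,,
$$
and a uniformly $H^1$-bounded family is automatically uniformly $L^2$-equicontinuous, with $\|\tau_yg-g\|_{L^2}\le|y|\,\|\grad g\|_{L^2}$; integrating the square in $t$ gives an $O(|y|^2)$ bound uniform in $n$.

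Combining the two estimates yields the stated convergence. The main obstacle is precisely the order $-1$ (derivative-gaining) property of $[\chi,P]$: this is the single step where the nonlocality of the Leray projection is genuinely absorbed, and the only one requiring singular-integral or pseudodifferential input. The remaining reductions are soft consequences of the translation invariance of $P$, the uniform continuity of $\chi$, and the global $L^\infty_t(L^2_x)$ bound on $\psi_n$.
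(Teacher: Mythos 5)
Your proposal is correct and follows essentially the same route as the paper's proof: the same cutoff decomposition $\chi P=P\chi+[\chi,P]$, with the term $P(\chi\psi_n)$ handled via the translation invariance and global $L^2$-boundedness of $P$ together with the hypothesis and the uniform $L^\infty_t(L^2_x)$ bound, and the commutator handled by the fact that $[\chi,P]$ is a classical pseudodifferential operator of order $-1$, hence bounded from $L^2(\bR^3)$ to $H^1(\bR^3)$, giving the $O(|y|)$ equicontinuity modulus. The only cosmetic difference is that you justify the order $-1$ property by the kernel cancellation $\chi(z)-\chi(x)=O(|x-z|)$ and symbolic calculus, where the paper simply cites Stein.
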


\begin{proof}
For each $\de\in(0,1)$ and $R>0$, let $\chi\equiv\chi(x)$ be a $C^\infty$ 
bump function satisfying
$$
\begin{aligned}
{}&\chi(x)=1\hbox{ for }|x|\le R\,,\quad&&\chi(x)=0\hbox{ for }|x|\ge R+\de\,,
\\
&0\le\chi\le 1\,,\quad&&|\chi'|\le2/\de\,.
\end{aligned}
$$
Obviously, for $|y|\le 1$, one has
$$
\begin{aligned}
\int_0^T\int_{\bR^3}|\chi(x+y) \psi_n(t,x+y)
    -\chi(x) \psi_n(t,x)|^2dxdt
\\
\le
2\int_0^T\int_{\bR^3}\chi(x+y)^2| \psi_n(t,x+y)
    - \psi_n(t,x)|^2dxdt
\\
+2\int_0^T\int_{\bR^3}|\chi(x+y)-\chi(x)|^2| \psi_n(t,x)|^2dxdt
\\
\le
2\int_0^T\int_{|x|\le R+2}| \psi_n(t,x+y)
    - \psi_n(t,x)|^2dxdt
2\left(\frac{2}{\de}\right)^2|y|^2T\|\psi_n\|_{L^\infty_t(L^2_x)}
\end{aligned}
$$
so that
$$
\int_0^T\int_{\bR^3}|\chi(x+y) \psi_n(t,x+y)
    -\chi(x) \psi_n(t,x)|^2dxdt\to 0
$$
as $|y|\to 0$ uniformly in $n$, since $\psi_n$ is bounded in $L^\infty_t(L^2(Mdvdx))$.

Consider next the decomposition
$$
\chi P=P\chi +[\chi,P]
$$
where $\chi$ denotes the pointwise multiplication by the function $\chi$, which is another
pseudo-differential operator of order 0 on $\bR^3$. In particular, $[\chi,P]$ is a classical
pseudo-differential operator of order $-1$ on $\bR^3$.

With this decomposition, we consider the expression
$$
\begin{aligned}
\int_0^T\int_{|x|\le R}|\chi(x+y)P\psi_n(t,x+y)
    -\chi(x)P\psi_n(t,x)|^2dxdt
\\
\le
2\int_0^T\int_{|x|\le R}|P(\chi\psi_n)(t,x+y)
    -P(\chi\psi_n)(t,x)|^2dxdt
\\
+2\int_0^T\int_{|x|\le R}|[\chi,P]\psi_n(t,x+y)
    -[\chi,P]\psi_n(t,x)|^2dxdt
\end{aligned}
$$
Because $P$ is an $L^2(dx)$-orthogonal projection, the first integral on the right-hand
side of the inequality above satisfies
$$
\begin{aligned}
\int_0^T\int_{|x|\le R}|P(\chi\psi_n)(t,x+y)
    -P(\chi\psi_n)(t,x)|^2dxdt
\\
\le
\int_0^T\int_{\bR^3}|P(\chi\psi_n)(t,x+y)
    -P(\chi\psi_n)(t,x)|^2dxdt
\\
\le
\int_0^T\int_{\bR^3}|\chi(x+y)\psi_n(t,x+y)
    -\chi(x)\psi_n(t,x)|^2dxdt\to 0
\end{aligned}
$$
as $|y|\to 0$, uniformly in $n$. On the other hand, because $[\chi,P]$ is a classical
pseudo-differential operator of order $-1$ on $\bR^3$ (see \cite{Stein}, \S 7.16, on
p. 268):  therefore $[\chi,P]$ maps $L^2(\bR^3)$ continuously into $H^1(\bR^3)$.
This implies in particular that $[\chi,P]\psi_n$ is bounded in $L^\infty(\bR_+;H^1(\bR^3))$ 
so that, for each $R>0$,
$$
\int_0^T\int_{|x|\le R}|[\chi,P]\psi_n(t,x+y)
    -[\chi,P]\psi_n(t,x)|^2dxdt\to 0
$$
as $|y|\to 0$, uniformly in $n$. Hence
$$
\int_0^T\int_{|x|\le R}|\chi(x+y)P\psi_n(t,x+y)
    -\chi(x)P\psi_n(t,x)|^2dxdt\to 0
$$
as $|y|\to 0$, uniformly in $n$. Assuming that $R>2$, that the parameter $\de$ in
the definition of $\chi$ satisfies $\de\in(0,1)$ and that $|y|\le 1$, we conclude  that
$$
\int_0^T\int_{|x|\le R-2}|P\psi_n(t,x+y)
    -P\psi_n(t,x)|^2dxdt\to 0
$$  
as $|y|\to 0$, uniformly in $n$, for each $R>0$ sufficiently large. 
\end{proof}

\bigskip
A second observation provides continuity of $P$ in $L^1_{loc}$ under some 
appropriate global bounds.

\begin{Lem}\label{LM-Q}
Let $\psi_\eps\equiv \psi_\eps(t,x)\in\bR^3$ be a family of vector fields such that 
$\psi_\eps\to 0$ in $L^1_{loc}(dtdx)$ and $\psi_\eps=O(1)$ in $L^1_{loc}(dt;L^2_x)$. 
Let $\xi_\de$ be a mollifying sequence on $\bR^3$ defined by
$\xi_\de(x)=\de^{-3}\xi(x/\de)$ where $\xi\in C^\infty_c(\bR^3)$ is a bump function 
such that
$$
\Supp\xi\subset B(0,1)\,,\quad\xi\ge 0\,,\hbox{ and }\int\xi dx=1\,.
$$
Then, for each $\de>0$,
$$
Q(\xi_\de\star\psi_\eps)\to 0\hbox{ in }L^1_{loc}(dtdx)\hbox{ as }\eps\to 0\,.
$$
\end{Lem}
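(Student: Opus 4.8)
The plan is to use the translation invariance of $Q$ to recast the claim as a convolution estimate, and then to separate the two hypotheses --- the local $L^1$ convergence and the global-in-$x$ bound in $L^1_{loc}(dt;L^2_x)$ --- by splitting the convolution kernel into a near and a far piece. Since $Q=\grad_x\Dlt_x^{-1}\Div_x$ is a Fourier multiplier (with matrix symbol $\zeta\otimes\zeta/|\zeta|^2$), it commutes with convolution by $\xi_\de$, so that
$$
Q(\xi_\de\star\psi_\eps)=K_\de\star\psi_\eps\,,
$$
where $K_\de$ is the matrix-valued kernel of $Q$ mollified by $\xi_\de$. Because $\xi_\de\in C^\infty_c$, the Calder\'on--Zygmund singularity of the kernel of $Q$ at the origin is smoothed out, so $K_\de$ is smooth and bounded; moreover, since that kernel is homogeneous of degree $-3$ away from the origin, one has the tail bound $|K_\de(x)|\le C_\de|x|^{-3}$ for $|x|\ge 2\de$. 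In particular $K_\de\in L^p(\bR^3)$ for every $p\in(1,\infty]$, although $K_\de\notin L^1(\bR^3)$ because of the non-integrable tail --- which is precisely the manifestation of the nonlocality of $Q$.

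Next I would fix $R>2\de$ and split $K_\de=K_\de\indc_{|x|\le R}+K_\de\indc_{|x|>R}$. The near piece $K_\de\indc_{|x|\le R}$ is bounded and compactly supported, hence lies in $L^1(\bR^3)$; convolving in $x$ and integrating in $t$, Young's inequality gives, for each $T>0$ and each compact $K\subset\bR^3$,
$$
\big\|(K_\de\indc_{|x|\le R})\star\psi_\eps\big\|_{L^1([0,T]\times K)}
\le\|K_\de\indc_{|x|\le R}\|_{L^1}\,\|\psi_\eps\|_{L^1([0,T]\times K')}\,,
$$
where $K'=K+\overline{B(0,R)}$ is compact. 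This tends to $0$ as $\eps\to 0$ by the hypothesis $\psi_\eps\to 0$ in $L^1_{loc}(dtdx)$.

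For the far piece I would use the $L^2_x$ bound. Fix $p\in(1,2)$, so that $\|K_\de\indc_{|x|>R}\|_{L^p}^p\sim\int_R^\infty r^{2-3p}\,dr\to 0$ as $R\to\infty$. Young's inequality with exponents $p$ and $2$ produces an exponent $r$ with $1/r=1/p-1/2\in(0,1/2)$, whence, for a.e. $t$,
$$
\big\|(K_\de\indc_{|x|>R})\star_x\psi_\eps(t,\cdot)\big\|_{L^r_x}
\le\|K_\de\indc_{|x|>R}\|_{L^p}\,\|\psi_\eps(t,\cdot)\|_{L^2_x}\,.
$$
Applying H\"older on the compact $K$ and integrating in $t$ then gives
$$
\big\|(K_\de\indc_{|x|>R})\star\psi_\eps\big\|_{L^1([0,T]\times K)}
\le|K|^{1-1/r}\,\|K_\de\indc_{|x|>R}\|_{L^p}\,\|\psi_\eps\|_{L^1([0,T];L^2_x)}\,,
$$
which is $O(\|K_\de\indc_{|x|>R}\|_{L^p})$ uniformly in $\eps$, by the bound $\psi_\eps=O(1)$ in $L^1_{loc}(dt;L^2_x)$. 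Hence the far piece can be made arbitrarily small, uniformly in $\eps$, by taking $R$ large.

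Combining the two estimates by the usual $\eps$--$R$ argument --- first choose $R$ so large that the far piece is below $\eta/2$ uniformly in $\eps$, then send $\eps\to0$ to push the near piece below $\eta/2$ --- yields $\|Q(\xi_\de\star\psi_\eps)\|_{L^1([0,T]\times K)}\to 0$, which is the claim. The one delicate point, and the reason the statement is not a triviality, is exactly the failure of $Q$ to be bounded on $L^1_{loc}$: the non-integrable $|x|^{-3}$ tail of $K_\de$ forces one to absorb the long-range part of the convolution using the \emph{global} $L^2_x$ control rather than the merely \emph{local} $L^1$ information, so the whole argument hinges on matching each hypothesis to the correct spatial scale.
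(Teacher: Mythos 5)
Your proof is correct, and it reaches the conclusion by a genuinely more direct route than the paper's. You exploit that, for fixed $\de$, the whole operator is convolution with the explicit matrix kernel $K_\de=Q\xi_\de$, which is smooth and bounded with tail $|K_\de(x)|\le C_\de|x|^{-3}$ for $|x|\ge 2\de$ (correct: for such $x$ only the principal-value part of the kernel of $Q$ contributes, the Dirac component giving a multiple of $\xi_\de$ supported in $B(0,\de)$); you then split $K_\de$ into a compactly supported piece, which is in $L^1$ and is paired via Young's inequality with the hypothesis $\psi_\eps\to 0$ in $L^1_{loc}(dtdx)$, and an $L^p$ tail with $1<p<2$ whose norm vanishes as $R\to\infty$, paired via Young and H\"older with the global bound $\psi_\eps=O(1)$ in $L^1_{loc}(dt;L^2_x)$, finishing with the standard $\eps$--$R$ exchange. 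The paper instead argues by duality: it represents the local $L^1$ norm as a pairing with $\chi\Om$ for a unit vector field $\Om$, integrates by parts so that $Q=\grad_x\Dlt_x^{-1}\Div_x$ acts on the test function through the kernel $\mathcal{G}(x)=x/(4\pi|x|^3)$ of $-\grad_x\Dlt_x^{-1}$ while one derivative lands on the mollifier (producing $(\grad\xi_\de)\star\psi_\eps$), and then performs the same near/far splitting on $\mathcal{G}$: the truncation $\mathcal{G}_R=O(R)_{L^1_x}$ is matched with $(\grad\xi_\de)\star\psi_\eps\to 0$ in $L^1_{loc}(dtdx)$, and the tail $\mathcal{G}^R=O(R^{-1/2})_{L^2_x}$ with the global $L^1_t(L^2_x)$ control. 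The underlying mechanism --- absorb the non-integrable tail of the nonlocal operator with the \emph{global} $L^2_x$ bound and the local part with the $L^1_{loc}$ convergence --- is identical in both arguments, so your diagnosis of where the difficulty lies is exactly right; what your version buys is the elimination of the duality device, the auxiliary unit vector field, and the transfer of a derivative onto $\xi_\de$, at the modest cost of computing the mollified kernel and verifying its tail decay, which you do correctly. If you write this up, make explicit that the identity $Q(\xi_\de\star\psi_\eps)=K_\de\star\psi_\eps$ and the convolution estimates are applied for a.e.\ $t$ with $\psi_\eps(t,\cdot)\in L^2_x$, which is precisely what the hypothesis $\psi_\eps=O(1)_{L^1_{loc}(dt;L^2_x)}$ furnishes.
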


\begin{proof}
Let $\chi\in C^\infty_c(\bR^3)$. Then
$$
\int_0^T\!\!\int_{\bR^3}\chi(x)|Q(\xi_\de\star\psi_\eps)(t,x)|dxdt
=
\int_0^T\!\!\int_{\bR^3}\chi(x)\Om(t,x)\cdot Q(\xi_\de\star\psi_\eps)(t,x)dxdt
$$
where $\Om$ is any measurable unit vector field such that
$$
\Om(t,x)=\frac{Q(\xi_\de\star\psi_\eps)}{|Q(\xi_\de\star\psi_\eps)|}(t,x)
    \hbox{ whenever }Q(\xi_\de\star\psi_\eps)(t,x)\not=0\,.
$$
Hence
$$
\begin{aligned}
\int_0^T\int_{\bR^3}\chi(x)|Q(\xi_\de\star\psi_\eps)(t,x)|dxdt
\\
=-\int_0^T\int_{\bR^3}
\Dlt_x^{-1}\Div_x\left(\chi\Om\right)\Div_x\left(\xi_\de\star\psi_\eps\right)(t,x)dxdt
\end{aligned}
$$
Let ${\mathcal G}(x)=\frac{x}{4\pi|x|^3}$ be the convolution 
kernel corresponding to $-\grad_x\Dlt_x^{-1}$; for $R>0$, 
denote ${\mathcal G}_R(x)={\mathcal G}(x)\indc_{|x|<R}$ 
and ${\mathcal G}^R(x)={\mathcal G}(x)\indc_{|x|\ge R}$. 
Thus
$$
\begin{aligned}
\int_0^T\int_{\bR^3}\chi(x)|Q(\xi_\de\star\psi_\eps)(t,x)|dxdt
\\
=
\int_0^T\int_{\bR^3}{\mathcal G}_R
	\star\left(\chi\Om\right)(\grad\xi_\de)\star\psi_\eps(t,x)dxdt
\\
+
\int_0^T\int_{\bR^3}{\mathcal G}^R\star\left(\chi\Om\right)(\grad\xi_\de)\star\psi_\eps(t,x)dxdt
\end{aligned}
$$
We have used here the following symplifying notation: if $a$ and $b$ are two 
vector fields on $\bR^3$, we denote
$$
a\star b(x)=\int_{\bR^3}a(x-y)\cdot b(y)dy
$$
where $\cdot$ is the canonical inner product on $\bR^3$.

Observe that ${\mathcal G}^R=O(1/\sqrt{R})$ in $L^2_x$, while $\chi\Om\in L^\infty_t(L^1_x)$ 
(since $|\Om|=1$ and $\Supp(\chi)$ is compact). Hence 
$$
{\mathcal G}^R\star\left(\chi\Om\right)=O(1/\sqrt{R})\hbox{ in }L^1_{loc}(dt;L^2_x)
$$ 
and $(\grad\xi_\de)\star\psi_\eps=O(1)$ in $L^1_{loc}(dt;L^2_x)$ for each $\de>0$ since 
$\psi_\eps=O(1)$ in $L^1_{loc}(dt;L^2_x)$. Hence the second integral is $O(1/\sqrt{R})$ 
for each $\de>0$.

Next ${\mathcal G}_R=O(R)$ in $L^1_x$ and thus 
${\mathcal G}_R\star\left(\chi\Om\right)=O(R)$ in $L^\infty_x$ since $|\Om|=1$; moreover, 
$$
\Supp_x\left({\mathcal G}_R\star\left(\chi\Om\right)\right)\subset\Supp(\chi)+B(0,R)
$$ 
is bounded for each $R>0$. On the other hand $\grad\xi_\de\star\psi_\eps\to 0$ in 
$L^1_{loc}(dtdx)$, so that the first integral vanishes as $\eps\to 0$ for each $\de>0$ 
and each $R>0$. Passing to the limsup as $\eps\to 0^+$, then letting $R\to 0^+$
leads to the announced result.
\end{proof}


\vskip2cm

\end{document}